\renewcommand*{\MR}[1]{ \href{http://www.ams.org/mathscinet-getitem?mr=#1}{MR #1}}
\numberwithin{equation}{section}
\theoremstyle{plain}
\newtheorem{theorem}[equation]{Theorem}
\newtheorem{lemma}[equation]{Lemma}
\newtheorem{proposition}[equation]{Proposition}
\newtheorem{corollary}[equation]{Corollary}
\theoremstyle{definition}
\newtheorem{definition}[equation]{Definition}
\newtheorem{notation}[equation]{Notation}
\theoremstyle{remark}
\newtheorem{remark}[equation]{Remark}
\newtheorem{example}[equation]{Example}
\newcommand*{\SI}{\mathfrak{Fr}} %
\newcommand*{\IS}{\mathcal{S}} %
\renewcommand*{\S}{\tilde{\mathcal{S}}} %
\newcommand*{\FIM}{\mathfrak{F_{im}}} 
\newcommand*{\C}{\mathbb C}
\newcommand*{\N}{\mathbb N}
\newcommand*{\F}{\mathbb F} 
\newcommand*{\FC}{\mathbb F_{C}} 
\newcommand*{\Group}{\Gamma} 
\newcommand*{\G}{\mathcal{G}} 
\newcommand*{\R}{\mathcal{R}} 
\newcommand*{\alg}{\mathrm{alg}}
\newcommand*{\FG}{\mathfrak{F_{gr}}}
\newcommand*{\FIS}{\mathfrak{F_{is}}}
\newcommand*{\FIC}{\mathfrak{F_{ic}}}
\newcommand*{\f}{\mathfrak f}
\newcommand{\e}{\mathbf{e}}
\newcommand*{\nb}{\nobreakdash}
\renewcommand*{\L}{\mathcal L}
\newcommand*{\Co}{\mathcal C}
\newcommand*{\Le}{\mathcal L}
\newcommand*{\To}{\mathcal{TP}}
\newcommand*{\T}{\mathcal{T}}
\newcommand*{\cont}{C}
\newcommand*{\contz}{\cont_0}
\newcommand{\FP}{\mathbb{FP}}
\newcommand{\Cfin}{C^{\mathrm{fin}}}
\newcommand*{\U}{\mathcal U}
\newcommand*{\E}{\mathcal E}
\newcommand*{\Y}{\mathcal X}%
\newcommand*{\YY}{\mathcal Y}%
\newcommand*{\OEC}{\mbox{$\mathcal{O}(E,C)$}} 
\newcommand*{\OECR}{\mbox{$\mathcal{O}_r(E,C)$}}
\newcommand{\SInf}{E^{0,r}_{\infty}}
\newcommand{\OO}{\mathcal{O}}
\newcommand*{\congto}{\xrightarrow\sim}
\newcommand*{\PI}[1]{\langle#1\rangle}
\newcommand*{\s}{\textup s}
\newcommand*{\rg}{\textup r}
\newcommand*{\sbe}{\subseteq} 
\newcommand*{\Free}{\mathbb F}
\newcommand*{\cstar}{\texorpdfstring{$C^*$\nobreakdash-\hspace{0pt}}{*-}}
\newcommand*{\onto}{\twoheadrightarrow}
\newcommand*{\red}{\text{red}}
\renewcommand*{\max}{\mathrm{max}}
\newcommand*{\ab}{\mathrm{ab}}
\newcommand*{\tight}{\mathrm{tight}}
\newcommand*{\dual}[1]{\widehat{#1}}
\newcommand{\Etight}{\dual\E_\tight}
\newcommand{\Aut}{\text{Aut}}
\newcommand{\idealin}{\mathrel{\trianglelefteq}} 
\begin{document}
\title[Inverse semigroups of separated Graphs]{Inverse semigroups of separated graphs\\ and associated algebras}

\author{Pere Ara}
\address{Departament de Matem\`atiques, Edifici Cc, Universitat Aut\`onoma de Barcelona, 08193 Cerdanyola del Vall\`es (Barcelona), Spain, and}
\email{pere.ara@uab.cat}

\author{Alcides Buss}
\address{Departamento de Matem\'atica\\
  Universidade Federal de Santa Catarina\\
  88.040-900 Florian\'opolis-SC\\
  Brazil}
\email{alcides.buss@ufsc.br}

\author{Ado Dalla Costa}
\address{Setor de Matem\'atica do Departamento de Administração Empresarial \\
	Universidade Estadual de Santa Catarina\\
	88.035-001 Florian\'opolis-SC\\
	Brazil}
 \email{adodallacosta@hotmail.com \\ ado.costa@udesc.br}

\begin{abstract}
In this paper we introduce an inverse semigroup $\IS(E,C)$ associated to a separated graph $(E,C)$ and describe its 
internal structure. In particular we show that it is strongly $E^*$-unitary and can be realized as a partial 
semidirect product of the form $\YY\rtimes\F$ for a certain partial action of the free group 
$\F=\F(E^1)$ on the edges of $E$ on a semilattice $\YY$ realizing the idempotents of $\IS(E,C)$. 
In addition we also describe the spectrum as well as the tight spectrum of $\YY$.

We then use the inverse semigroup $\IS(E,C)$ to describe several ``tame'' algebras associated to $(E,C)$, including 
its Cohn algebra, its Leavitt-path algebra, and analogues in the realm of \cstar{}algebras, like the tame 
\cstar{}algebra $\OO(E,C)$ and its Toeplitz extension $\T(E,C)$, proving that these algebras are canonically 
isomorphic to certain algebras attached to $\IS(E,C)$. Our structural results on $\IS(E,C)$ imply that these algebras can be realized as partial crossed products, revealing a great portion of their structure.

\end{abstract}

\subjclass[2020]{46L55, 20M18}

\keywords{Inverse semigroup, Separated Graph, C*-Algebras}

\thanks{The first author was partially supported by the Spanish State Research Agency (grants No.\ PID2020-113047GB-I00/AEI/10.13039/501100011033, PID2023-147110NB-I00,  and CEX2020-001084-M), and by the Comissionat per Universitats i Recerca de la Generalitat de Catalunya (grant No.\ 2021-SGR-01015). The second author was supported by CNPq, CAPES/Humboldt and FAPESC, and the third author by CNPq - 402924/2022-3.}

\maketitle

\tableofcontents

\section{Introduction}

Inverse semigroups provide important tools in the study of several types of \cstar{}algebras. Close connections between inverse semigroups, étale groupoids and operator algebras were established in the work of Paterson \cite{Paterson:Groupoids}, and this was later extended by Exel \cite{Exel:Inverse_combinatorial} with the introduction of the tight spectrum and the tight \cstar{}algebra of an inverse semigroup. 

Graph \cstar{}algebras are usually interpreted as the ``\cstar{}algebras one can see'', as several of their properties or invariants (like simplicity or their $K$-theory) can be read off directly from the graphs that originate them. On the other hand, this class of \cstar{}algebras offers some limitations, as they are always nuclear, their odd-degree $K$-theory can only give free abelian groups, and their non-stable $K$-theory is always unperforated and separative. For this reason, several  variations on the standard theory have been considered in order to get more general models for \cstar{}algebras. Among these, one can consider, for instance, the labelled graphs \cite{BatesPask}, ultragraphs \cite{Tomforde} and higher-rank graphs \cite{KumjianPask}, and their associated \cstar{}algebras.

The purpose of this paper is to introduce the inverse semigroup $\IS (E,C)$ of a separated graph $(E,C)$ and to develop Exel's program \cite{Exel:Inverse_combinatorial} in order to describe the structure of the tight algebras associated with this inverse semigroup. To achieve this goal, we will examine the internal structure of the semigroup $\IS (E,C)$, obtaining a detailed description of its semilattice of idempotents, which is a crucial component in our analysis. 
The inverse semigroup $\IS(E,C)$ generalizes the graph inverse semigroup $\mathcal S (E)$ first introduced by Ash and Hall in \cite{AshHall}, which has been widely analyzed by several authors, see e.g. \cites{JonesLawson, LaLonde-ConditionK, meakin-wang-2021, meakin-milan-wang-2021, mesyan-mitchell-2016}.

Recall that a separated graph is a pair $(E,C)$ consisting of a (directed) graph $E=(s,r\colon E^1\to E^0)$ and a separation $C$ on $E$, meaning a certain partition of its edges. We can attach to such a pair several 
abstract $*$-algebras (over some base commutative unital ring with involution $K$) as well as \cstar{}algebras. These are algebras (or \cstar{}algebras) generated by projection copies of the vertices $E^0$ and partial isometry copies of the edges $E^1$, satisfying some natural relations coming from the graph structure, and also taking into account its separation. In addition to the canonical path relations, we consider the (SCK1)-relation:
$$e^*f=\delta_{e,f}r(e),\quad e,f\in X,\quad\mbox{ for }X\in C.$$
Taking into account only these relations, we get the so-called Cohn path algebra of the separated graph over $K$, which we denote by $\Co_K(E,C)$. 
Its enveloping \cstar{}algebra is the so-called Toeplitz path \cstar{}algebra of $(E,C)$, denoted by $\To(E,C)$.
Adding the extra (SCK2)-relation:
$$\sum_{e\in X}ee^*=v\quad\mbox{for all }X\in C,\, X\sbe s^{-1}(v) \mbox{ with } 0<|X|<\infty$$
one obtains the Leavitt path algebra $\Le_K(E,C)$, introduced in \cite{AG12}, and its companion \cstar{}algebra $C^*(E,C)$, which is the main \cstar{}algebra originally studied in \cite{Ara-Goodearl:C-algebras_separated_graphs}. 

The copy of the graph into the Cohn path algebra $\Co_K(E,C)$ generates a $*$-semigroup $\S(E,C)$, which can also be viewed as the universal $*$-semigroup with zero generated by projections $v\in E^0$ and partial isometries $e\in E^1$ satisfying the same relations (in the category of $*$-semigroups with zero) that define the Cohn path algebra, or its Toeplitz \cstar{}algebra. Indeed, it is a simple observation to deduce the isomorphisms
\begin{equation}\label{eq:iso-Cohn-Cst}
    K[\S(E,C)]\cong\Co_K(E,C),\qquad C^*(\S(E,C))\cong \To(E,C),
\end{equation}
where $K[S]$ denotes the (zero-preserving) $*$-algebra over $K$ of a $*$-semigroup with zero $S$, and $C^*(S)$ denotes the enveloping \cstar{}algebra of $\C[S]$. In particular it follows that the Leavitt path $*$-algebra $\Le_K(E,C)$ or its enveloping \cstar{}algebra $C^*(E,C)$ can be realized as a quotient of $K[\S(E,C)]$ or $C^*(\S(E,C))$, respectively. However this quotient cannot be directly obtained from the $*$-semigroup $\S(E,C)$ as the (SCK2)-relation involves a sum and does not make sense in this category.

Another, more basic, structural problem is that $\S(E,C)$ is not, in general, an inverse semigroup. This happens because, although its generators are partial isometries (i.e. satisfy the relation $xx^*x=x$), this is no longer true, in general, for their products. One can fix this by adding the extra commutation relations $e(x)e(y)=e(y)e(x)$, where $e(x):=xx^*$. More precisely, defining $\IS(E,C)$ as the quotient of $\S(E,C)$ by the congruence generated by these commutation relations for all $x,y\in \S(E,C)$, we can prove that $\IS(E,C)$ is an inverse semigroup. Indeed, it turns out to be the universal inverse semigroup generated by $v\in E^0$ and $e\in E^1$ satisfying the same relations that define the $*$-semigroup $\S (E,C)$, or its associated Cohn $*$-algebra or Toeplitz path \cstar{}algebra, as explained above.

The main goal of this paper is to study the internal structure of the inverse semigroup $\IS(E,C)$ and its associated $*$-algebras, as well as \cstar{}algebras. As should be expected, the $*$-algebra $K[\IS(E,C)]$ is a ``tame'' quotient of $K[\S(E,C)]\cong \Co_K(E,C)$, namely the quotient by the ideal generated by the commutators $[e(x),e(y)]=e(x)e(y)-e(y)e(x)$ for $x,y\in \S(E,C)\sbe K[\S(E,C)]$. We denote this quotient $*$-algebra by $\Co_K^\ab(E,C)$. Similarly, we can define a tame quotient \cstar{}algebra of $\To(E,C)$, which we denote by $\T(E,C)$. Using these notations, a standard argument using their universal defining relations shows that we have canonical isomorphisms factoring those in~\eqref{eq:iso-Cohn-Cst}:
\begin{equation}\label{eq:tame-iso-Cohn-Cst}
    K[\IS(E,C)]\cong\Co^\ab_K(E,C),\qquad C^*(\IS(E,C))\cong \T(E,C).
\end{equation}
In a similar way, we can define tame versions of the Leavitt path $*$-algebra $\Le_K(E,C)$ and its enveloping \cstar{}algebra $C^*(E,C)$, and we denote these tame quotients by $\Le_K^\ab(E,C)$ and $\OO(E,C)$, respectively. For certain special classes of separated graphs, these algebras have been already introduced and studied before in several papers, see for instance \cites{Ara-Exel:Dynamical_systems, AraLolk, Lolk:tame, ABPS, AEK}. 
We show in Theorem \ref{thm:iso-tight-algebras} that these algebras can be described in terms of the tight algebras of the inverse semigroup $\IS(E,C)$, more precisely we have canonical isomorphisms factoring~\eqref{eq:tame-iso-Cohn-Cst}:
\begin{equation}\label{eq:tame-tight-iso-Cohn-Cst}
    K_\tight[\IS(E,C)]\cong\Le^\ab_K(E,C),\qquad C^*_\tight(\IS(E,C))\cong \OO(E,C).
\end{equation}
The tight algebra of an inverse semigroup $S$ can be understood once we have a good knowledge of its canonical action on the spectrum $\dual\E$ (i.e. the space of filters) of the semilattice of idempotents $\E= \E (S)$, and on the tight spectrum $\dual\E_\tight$, which is the closure in $\dual\E$ of the space $\dual\E_\infty$ of maximal filters. Taking germs of these actions of $S$ on $\dual\E$ and $\dual\E_\tight$, one gets the universal groupoid $\G(S)$ of $S$, and its tight quotient $\G_\tight(S)$, with associated \cstar{}algebras $C^*(\G(S))\cong C^*(S)$ and $C^*(\G_\tight(S))\cong C^*_\tight(S)$, respectively, and similarly for their reduced \cstar{}algebras, or their abstract $*$-algebras over the ring $K$.

We will obtain a detailed description of the semilattice $\E=\E(\IS(E,C))$ and its (tight) spectrum, along with the canonical action of $\IS(E,C)$, leading to a complete description of their associated groupoids and algebras.
Moreover, we show that elements of $\IS(E,C)$ have a standard form, which we call the \emph{Scheiblich normal form}, because it is similar to the well-known Scheiblich normal form for the elements of the free inverse semigroup \cite{lawson}*{Chapter~6}. More precisely, we show that every non-trivial element of $\IS(E,C)$ can be represented as an expression of the form
\begin{equation}\label{eq:Scheiblich-normal-form}
    (\gamma_1\gamma_1^{-1})\cdots (\gamma_n \gamma _n^{-1})\lambda 
\end{equation} 
for certain paths $\gamma_i, \lambda$ in the fundamental groupoid $\FG(E)$ of $E$. Moreover, we show that this representation is unique if we add some natural conditions on the words appearing in the above representation. This can be geometrically expressed in terms of Munn $(E,C)$-trees, which are certain subtrees of the Cayley graph $\Gamma_E$ of $\FG(E)$ (see Section \ref{sect:Munntrees}). 

The Scheiblich normal form is essentially equivalent to a representation of the inverse semigroup $\IS(E,C)$ as a (restricted) semidirect product of the form
\begin{equation}\label{eq:semi-direct-representation}
    \IS(E,C)\cong \YY\rtimes_\theta^r \F,
\end{equation}
where $\YY$ is an isomorphic copy of the semilattice $\E=\E(\IS(E,C))$, which can be described in different forms in terms of certain special paths in $\FG(E)$ that take into account the structure of the separated graph $(E,C)$. Here $\theta$ is a partial action of the free group $\F=  \F(E^1)$ on the edge set $E^1$ of $E$ on $\YY$ by partial semilattice isomorphisms between ideals of $\YY$. In particular this shows that $\IS(E,C)$ is a well-behaved inverse semigroup: it is strongly $E^*$-unitary, which means that it admits an idempotent pure partial homomorphism $\f\colon \IS(E,C)^\times\to \F$, which we view as a `grading'. 
This map reads off the element $\lambda\in \F$ in the Scheiblich normal form~\eqref{eq:Scheiblich-normal-form}. 

The isomorphism~\eqref{eq:semi-direct-representation} then leads immediately to a similar representation of the groupoid $\G(E,S):=\G(\IS(E,S))$ and its tight quotient as semidirect products of the form
$$\G(E,S)\cong \dual\YY\rtimes\F,\qquad\G_\tight(E,S)\cong\dual\YY_\tight\rtimes\F$$
with respect to the (dual) partial action of $\F$ on $\dual\YY$ induced from $\theta$. We also  describe both $\dual\YY$ and $\dual\YY_\tight$ in terms of certain subsets of $\FG(E)$, or, equivalently, in terms of certain infinite Munn $(E,C)$-trees. All this implies analogous decompositions of the several tame algebras associated with $(E,C)$ as partial crossed products by partial actions of $\F$:
\begin{equation*}\label{eq:alg-tame-iso-Cohn-Cst}
    \Co^\ab_K(E,C)\cong C_K(\dual\YY)\rtimes\F,\qquad \T(E,C)\cong C_0(\dual\YY)\rtimes\F
\end{equation*}
and
\begin{equation*}\label{eq:alg-tametight-iso-Cohn-Cst}
    \Le^\ab_K(E,C)\cong C_K(\dual\YY_\tight)\rtimes\F,\qquad \OO(E,C)\cong C_0(\dual\YY_\tight)\rtimes\F.
\end{equation*}
Here $C_K(X)$ denotes the commutative $K$-algebra of compactly supported locally constant functions $X\to K$ from a totally disconnected locally compact Hausdorff space $X$, and $\contz(X)$ the commutative \cstar{}algebra of continuous functions $X\to \C$ vanishing at infinity. In the \cstar{}algebra case, we can also look at the reduced crossed product, and this corresponds to the reduced tame \cstar{}algebra of $(E,C)$:
$$\OO_r(E,C)\cong C_0(\dual\YY_\tight)\rtimes_r\F.$$
This is, in general, a proper quotient of $\OO(E,C)$: indeed, we have $\OO(E,C)=\OO_r(E,C)$ if and only if the partial action of $\F$ on $\dual\YY_\tight$ is amenable, and this is equivalent to nuclearity of either $\OO(E,C)$ or $\OO_r(E,C)$, or amenability of the underlying groupoid $\G_\tight(E,C)$.

The paper is organized as follows. After a section on preliminaries, we introduce in Section \ref{sect:InvSemSG} the inverse semigroup $\IS(E,C)$ associated to a separated graph $(E,C)$. We show in Theorem \ref{thm:ECMunntrees} the basic representation result of $\IS(E,C)$ in terms of Munn $(E,C)$-trees. This gives immediately the main structural result concerning the representation of $\IS(E,C)$ as a partial semidirect product and the uniqueness of the Scheiblich normal form for elements of $\IS (E,C)$ (Theorem \ref{thm:uniqueness-SNF}). Using the Scheiblich normal form, we also show a rigidity result for the automorphism group of $\IS(E,C)$ in Theorem 
\ref{thm:automorphisms}. We characterize in Section \ref{sect:tightsection} various spaces of filters of interest. We first identify in Proposition \ref{prop:characterizing-filt-ultra-tight} the space of filters $\dual\E$ on the semilattice of idempotents $\E$ of the inverse semigroup $\IS (E,C)$ as a space consisting of some specified subsets of the set $\FC $ of $C$-separated paths (see Definition \ref{def:Cseparatedword-and-compatible} for the definition of the latter). Using this identification, we provide the characterization of the space of ultrafilters $\dual\E_\infty$ in Theorem \ref{cor:ultrafilters}, and of the space of tight filters $\dual\E_\tight$ in Theorem \ref{thm:charac-tightfilters}. As an immediate consequence of these characterizations, we obtain that $\dual\E_\infty$ is closed in $\dual\E$ if and only if $(E,C)$ is a finitely separated graph (Corollary \ref{cor:ultrafilters-closed}). Finally we deal in Section \ref{sect:algebras} with the algebras associated to separated graphs in complete generality. The main technical result in this section is Theorem \ref{thm:Leavitt-universal-tight}, which shows that the tame Leavitt path algebra $\Le^\ab_K(E,C)$ is the universal algebra for tight representations of the inverse semigroup $\IS(E,C)$. This is a principal ingredient for the proof of Theorem \ref{thm:iso-tight-algebras}, which contains our main results on the structure of the tame Leavitt path algebra $\Le^\ab_K(E,C)$ and the tame $C^*$-algebra $\OO (E,C)$ of a separated graph.

The structural results developed in this work not only clarify the internal properties of $\IS(E,C)$, but also provide a natural framework to analyze the associated algebras -- such as Leavitt path algebras and graph C$^*$-algebras -- via partial crossed product decompositions. This perspective connects the combinatorial nature of separated graphs with the dynamics of \'etale groupoids, offering tools to investigate key properties of the associated algebras, including simplicity, pure infiniteness, and K-theory. Such decomposition techniques have already proven effective in the study of these algebras in earlier works (e.g., \cites{Ara-Exel:Dynamical_systems, AraLolk, Lolk:tame, Lolk:nuclearity}), and continue to play a central role in ongoing developments. While a full exploration of these implications lies beyond the scope of the present paper, we expect that the framework presented here will support and inspire further investigations in this direction.

\section{Preliminaries}\label{sec:Preliminaries}

In this section we summarize some of the main tools we are going to use throughout the paper. This will also consolidate the notation we use.

\subsection{Directed graphs}
	A \emph{directed graph} $E$ is a quadruple of the form $E=(E^0,E^1,s,r)$ consisting of two sets $E^0$,$E^1$ and two maps $s,r: E^1 \rightarrow E^0$. The elements of $E^0$ and $E^1$ are called vertices and edges and the maps $s,r$ are called the source and range maps, respectively. Although in the literature this is a common assumption, we will not need to make any general assumptions on the cardinality of our graphs; in particular, we do not require our graphs to be finite or countable.
	
	The graph is called \emph{row-finite} if every vertex emits at most finitely many edges. A \emph{finite path} in $E$ is a sequence of edges of the form $\mu:= e_1 \ldots e_n$ with $r(e_i)=s(e_{i+1})$ for all $i \in \{1, \ldots, n-1\}$. The length of $\mu$ is $|\mu|:=n$ and paths with length $0$ are identified with the vertices of $E$ (we set $s(v)=r(v)=v$). We denote by $E^n$ the set of all finite paths with length $n$ and $\text{Path}(E) := \displaystyle\cup_{n=0}^{\infty} E^n$ denotes the set of all paths of $E$. We can extend the source and range maps to $\text{Path}(E)$ in the obvious way: if $\mu=e_1\ldots e_n \in \text{Path}(E)$, then $s(\mu) = s(e_1)$ and $r(\mu) = r(e_n)$. Given two paths $\mu,\nu \in \text{Path}(E)$ with $r(\mu) = s(\nu)$, one obtains a new path $\mu\nu$ by concatenation with $|\mu\nu| = |\mu|+|\nu|$. 
 
	Given a graph $E$, we define its \emph{extended graph} (also called the \emph{double graph} of $E$)
as the new graph $\hat{E} = (E^0,E^1 \cup  E^{-1}, r, s)$, containing $E$ as a subgraph, and where we set
$s(e^{-1}) := r(e)$, $r(e^{-1}) := s(e)$ for all $e \in  E^1$.
\subsection{Separated graphs}
A \emph{separated graph} is a pair $(E,C)$ consisting of a graph $E=(E^0,E^1,s,r)$ and a \emph{separation} $C=\bigsqcup_{v\in E^0}C_v$ on $E$, consisting of partitions $C_v$ of $s^{-1}(v)\sbe E^1$ into pairwise disjoint nonempty subsets (with $C_v= \emptyset$ if $v$ is a sink). The \emph{trivial separation} is the separation with $C_v=\{ s^{-1}(v)\}$ for all non-sinks $v\in E^0$; a graph with the trivial separation is also called trivially separated or a non-separated graph. The \emph{free separation} of $E$ is the finest separation, where each $s^{-1}(v)$ is separated into singletons, that is, $C_v=\{\{e\}:e\in s^{-1}(v)\}$. 

If all the sets in $C$ are finite, we say that $(E,C)$ is a \emph{finitely separated graph}.

\subsection{Semigroups with involution and inverse semigroups} Recall that a semigroup is a set $S$ endowed with an associative multiplication $S\times S\to S$, usually written as concatenation $(s,t)\mapsto st$. 
An \emph{idempotent} of $S$ is an element $e\in S$ satisfying $e^2=e$; we write $\E(S)$ for the subset of idempotents of $S$. An involution on $S$ is an involutive and anti-multiplicative map $S\to S$, which we usually write as a pseudo-inverse map $s\mapsto s^{-1}$, so it satisfies $(s^{-1})^{-1}=s$ and $(st)^{-1}=(t^{-1})(s^{-1})$. When $S$ is a semigroup endowed with an involution, we also call it a $*$-semigroup. An inverse semigroup is a $*$-semigroup satisfying 
$$(1)\,\, ss^{-1}s=s\quad \mbox{and}\quad  (2)\,\, (ss^{-1})(tt^{-1})=(tt^{-1})(ss^{-1})\quad\mbox{for all }s,t\in S.$$ 
In this case, the pseudo-inverse $s^{-1}$ of $s\in S$ is uniquely determined by the relations $ss^{-1}s=s$ and $s^{-1}ss^{-1}=s^{-1}$. Moreover $\E(S)$ is a commutative inverse subsemigroup consisting solely of idempotents. Those are exactly the semilattices, with meet given by the multiplication: $e\wedge f= ef$. Moreover, inverse semigroups are naturally endowed with a partial order:
$$s\leq t\Leftrightarrow ts^{-1}s=s\Leftrightarrow ss^{-1}t=s.$$
Given a semigroup $S$, we may always add a (formal) zero element $0$, getting a semigroup with zero $S_0=S\sqcup \{0\}$, as well as a unit $1$, obtaining a semigroup with unit (i.e. a monoid) $S_1=S\sqcup\{1\}$. 
We shall also write $S_u$ for the minimal unitization of $S$, that is, $S_u=S$ if $S$ is already unital and $S_u=S_1$ (formal unitization) otherwise. Our semigroups of interest will usually have already a zero, sometimes a unit. If $S$ already has a unit, we shall generally write $S^*=S\backslash\{1\}$, and if $S$ has a zero, $S^\times =S\backslash\{0\}$. Of course, these are not semigroups, in general.

\subsection{Strongly E*-unitary inverse semigroups and partial actions} A homomorphism $\pi \colon S \to \Group $ from an inverse semigroup $S$ to a group $\Group$ is said to be {\it idempotent pure} if $\pi^{-1}(1)=\E(S)$.  
An inverse semigroup $S$ is {\it $E$-unitary} is there is an idempotent pure homomorphism $\pi \colon S\to \Group$, where $\Group$ is a group.

Since we will work with semigroups with $0$, we need the corresponding notion in this setting. Let $S$ be an inverse semigroup with zero and $\Group$ a group. A {\it partial homomorphism} from $S$ to $\Group$ is a map $\pi \colon S^\times \to \Group$ such that $\pi (st) =\pi (s)\pi (t)$ whenever $st\ne 0$. A partial homomorphism $\pi$ is {\it idempotent pure} if $\pi^{-1}(1)= \E(S)^\times$. 
If such an idempotent pure partial homomorphism exists, we say that $S$ is \emph{strongly $E^*$-unitary}.
 
The above concepts are intimately related to the notion of a semidirect product associated to a partial action of a group on a semilattice, see \cite{KelLawson} and \cite{MilanSteinberg}. We briefly recall the relevant definitions.  

 If $\Group$ is a group acting partially on a semilattice $\E$ via partial isomorphisms $\theta_g\colon D_{g^{-1}}\to D_g$ between ideals $D_g\idealin \E$, then we define its {\it semidirect product} as $\E\rtimes_\theta\Group=\{(e,g): e\in D_g, g\in \Group\}$. We write elements of $\E\rtimes_\theta\Group$ as $e\delta_g$ with $e\in D_g$, $g\in \Group$. This becomes an inverse semigroup when endowed with the following product and inverse:
$$(e\delta_g)\cdot (f\delta_h):=\theta_g(\theta_g^{-1}(e)f)\delta_{gh},\quad (e\delta_g)^{-1}:=\theta_g^{-1}(e)\delta_{g^{-1}}.$$
Of course, this is similar to the definition of crossed products of group partial actions on algebras, see \cite{Exel:Partial_dynamical}, and a similar proof shows that the above product is indeed associative and turns $\E\rtimes_\theta\Group$ into an inverse semigroup. The inverse semigroups of the form $\E\rtimes_\theta \Group$ are $E$-unitary inverse semigroups (see \cite{KelLawson} and \cite{MilanSteinberg}) with semilattice of idempotents $\E (\E\rtimes_\theta\Group)\cong \E$ and the canonical grading homomorphism $\E\rtimes_\theta\Group\to\Group$ induces an isomorphism from the maximal group image of $\E\rtimes_\theta\Group$ onto a subgroup of $\Group$.

As our inverse semigroups usually have a zero, the above construction is not suitable for us, and we modify it as follows. If $\E$ has a zero, then all non-empty ideals $D_g$ necessarily contain the zero $0\in D_g$, and the subset $J_0:=\{0\delta_g: D_g\not=\emptyset\}$ is an ideal of $\E\rtimes_\theta\Group$. We define the \emph{restricted} semidirect product as the Rees quotient inverse semigroup $$\E\rtimes_\theta^r\Group:=(\E\rtimes_\theta\Group)/J_0=\big((\E\rtimes_\theta\Group)\backslash J_0\big)\sqcup \{0\}.$$
In other words, $\E\rtimes_\theta^r\Group$ is the quotient of $\E\rtimes_\theta\Group$ where all elements $0\delta_g$ are identified with zero. The grading homomorphism on $\E\rtimes_\theta\Group$ factors through an idempotent pure partial grading homomorphism $(\E\rtimes_\theta^r\Group)^\times\to \Group$. In particular this implies that $\E\rtimes_\theta^r\Group$ is strongly $E^*$\nb-unitary. 
The converse also holds, as follows. Indeed, the following result is essentially well known: it appears in \cite[Theorem 2.5]{BulFouGo}, \cite{Steinberg2003}, \cite{MilanSteinberg}*{Section~5} and \cite{Li-K-theory}*{Section~2} in disguised forms.

\begin{proposition}
	\label{prop:KelLawson-result}
	Let $S$ be a strongly $E^*$-inverse semigroup with $0$, and let $\pi \colon S^\times \to \Group$ be an idempotent pure partial homomorphism. Then $\pi$ induces a partial action $\theta=(\theta_g\colon D_{g^{-1}}\to D_g)_{g\in \Group}$ of $\Group$ on $\E = \E(S)$ and a natural isomorphism $S\cong \E \rtimes_\theta^r \Group$. The ideal $D_g$ is defined as the set of all the elements $ss^{-1}$, where $s\in S^\times$ and $\pi (s) = g$, together with $0$. The partial action is defined by $\theta_g(s^{-1}s) = ss^{-1}$ for $s^{-1}s\in D_{g^{-1}}$, and the isomorphism $\E \rtimes ^r_\theta  \Group \to S$ sends $e\delta_g$ to $s$, where $e=ss^{-1}$ and  $\pi (s) = g$.   
\end{proposition}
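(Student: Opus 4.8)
The plan is to reconstruct $S$ from the pair $(\E,\pi)$ precisely along the recipe in the statement, with the whole argument resting on a single uniqueness principle squeezed out of idempotent purity. \emph{First} I would record the two elementary consequences of the hypotheses: since $\pi$ is idempotent pure, $\pi(h)=1$ for every nonzero idempotent $h$, and therefore $\pi(s^{-1})=\pi(s)^{-1}$ for all $s\in S^\times$ (apply $\pi$ to the nonzero idempotent $s^{-1}s$, which forces $\pi(s^{-1})\pi(s)=\pi(s^{-1}s)=1$). \emph{The key lemma} is then: if $s,t\in S^\times$ satisfy $\pi(s)=\pi(t)$ and $ss^{-1}=tt^{-1}$, then $s=t$; equivalently, the assignment $s\mapsto(ss^{-1},\pi(s))$ is injective on $S^\times$. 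To prove it, write $g=\pi(s)=\pi(t)$, $f=ss^{-1}=tt^{-1}$ and set $u:=s^{-1}t$. Then $\pi(u)=g^{-1}g=1$, and $u\ne 0$ because $su=(ss^{-1})t=ft=t\ne 0$; hence $u$ is a nonzero idempotent, so $u=u^{-1}=t^{-1}s$. From $su=ss^{-1}t=t$ and $tu=tt^{-1}s=s$ we obtain $t\le s$ and $s\le t$, whence $s=t$. Applying this to $s^{-1},t^{-1}$ yields the symmetric form with source idempotents: $\pi(s)=\pi(t)$ and $s^{-1}s=t^{-1}t$ force $s=t$.

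With this in hand I would set up the partial action. Each $D_g=\{ss^{-1}:s\in S^\times,\ \pi(s)=g\}\cup\{0\}$ is an ideal of $\E$, since for an idempotent $h$ and $e=ss^{-1}\in D_g$ one has $eh=(hs)(hs)^{-1}$ with $\pi(hs)=g$ when $hs\ne 0$ (and $eh=0$ otherwise). The formula $\theta_g(s^{-1}s)=ss^{-1}$ (for $\pi(s)=g$) is well defined by the symmetric uniqueness statement, maps $D_{g^{-1}}$ onto $D_g$, and satisfies $\theta_{g^{-1}}=\theta_g^{-1}$. The one nontrivial point is that $\theta_g$ preserves meets: taking $e=s^{-1}s$, $f=t^{-1}t$ with $\pi(s)=\pi(t)=g$, a direct computation (via the representative $w=s\,t^{-1}t$) gives $\theta_g(e\wedge f)=s(t^{-1}t)s^{-1}$, so I must prove the identity $s(t^{-1}t)s^{-1}=(ss^{-1})(tt^{-1})$ whenever $\pi(s)=\pi(t)$. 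Here $p:=st^{-1}$ has $\pi(p)=1$ and is therefore idempotent with $pp^{-1}=s(t^{-1}t)s^{-1}$; the inequality $p\le ss^{-1}\wedge tt^{-1}=:q$ is immediate, while $q\le p$ follows by applying the key lemma to the restrictions $qs$ and $qt$ (both have range idempotent $q$ and $\pi$-image $g$, hence coincide, giving $qp=q$). I would then verify the partial-action axioms: $\theta_1=\id_\E$ (since $D_1=\E$ and grade-$1$ elements are idempotents with $ss^{-1}=s=s^{-1}s$), the domain identity $\theta_g(D_{g^{-1}}\cap D_h)=D_g\cap D_{gh}$, and the cocycle identity $\theta_g\theta_h=\theta_{gh}$ on the common domain — all through the uniform device of representing idempotents as $ss^{-1}$ or $s^{-1}s$ and multiplying representatives (e.g. if $\pi(s)=g$, $\pi(t)=h$ and $s^{-1}s=tt^{-1}$, then $w=st$ has $\pi(w)=gh$, $w^{-1}w=t^{-1}t$ and $ww^{-1}=ss^{-1}$).

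It then remains to produce the isomorphism $\psi\colon \E\rtimes^r_\theta\Group\to S$ by $\psi(e\delta_g)=s$, where $ss^{-1}=e$ and $\pi(s)=g$ (and $\psi(0)=0$). This is well defined and injective exactly by the key lemma, and surjective because $s\in S^\times$ is the image of $(ss^{-1})\delta_{\pi(s)}$. For multiplicativity, writing $\psi(e\delta_g)=s$, $\psi(f\delta_h)=t$, I would compute the first coordinate of $(e\delta_g)(f\delta_h)$, namely $\theta_g(\theta_g^{-1}(e)f)=\theta_g(s^{-1}s\cdot tt^{-1})$, and check via the representative $w=s\,tt^{-1}$ that it equals $s(tt^{-1})s^{-1}=(st)(st)^{-1}$, while its group coordinate is $gh=\pi(st)$; since $st=0$ forces this coordinate to vanish, $\psi$ sends the product to $st$ in every case. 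A bijective semigroup homomorphism between inverse semigroups automatically respects the involution, so $\psi$ is an isomorphism of inverse semigroups, and $\pi\circ\psi$ is the canonical grading $e\delta_g\mapsto g$, which is the naturality claimed.

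The main obstacle is the key lemma together with the meet-preservation identity $s(t^{-1}t)s^{-1}=(ss^{-1})(tt^{-1})$: these are the two points where idempotent purity does the real work, rigidifying $S$ by turning $\pi$ into a faithful bookkeeping of the range idempotents. Once they are in place, everything else is the routine verification familiar from partial crossed products, carried out uniformly by multiplying representatives.
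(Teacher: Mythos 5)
Your proof is correct, but it takes a genuinely different route from the paper's. The paper verifies essentially nothing directly: it invokes the $E$\nobreakdash-unitary case from \cite{KelLawson}*{Theorem~3.7} and then reduces the strongly $E^*$-unitary case to it by observing, following Milan--Steinberg, that $S$ is the Rees quotient of the $E$-unitary inverse semigroup $T=\{(s,g)\in S\times\Group : s=0 \text{ or } g=\pi(s)\}$ by the ideal $\{0\}\times\Group$. You instead give a self-contained verification whose engine is the uniqueness lemma that $\pi(s)=\pi(t)$ and $ss^{-1}=tt^{-1}$ force $s=t$; this is exactly where idempotent purity does its work, and it is in essence the content of the cited theorem specialized to this setting. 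From it you check by hand everything the proposition asserts: well-definedness of $\theta_g$, the ideal property of $D_g$, meet preservation via the identity $s(t^{-1}t)s^{-1}=(ss^{-1})(tt^{-1})$, the partial-action axioms, and bijectivity and multiplicativity of $e\delta_g\mapsto s$. The paper's route buys brevity and attribution to the literature; yours buys independence from the cited results and an explicit verification of the formulas that the paper only states, which is arguably more informative for a reader who wants to use them. One small point to tighten: in proving $s(t^{-1}t)s^{-1}=(ss^{-1})(tt^{-1})$ you write ``$p:=st^{-1}$ has $\pi(p)=1$ and is therefore idempotent'', which tacitly assumes $p\neq 0$; the degenerate case $p=0$ is in fact covered by your own argument (if $q:=(ss^{-1})(tt^{-1})\neq 0$, the key lemma gives $qs=qt$, hence $qp=qtt^{-1}=q\neq 0$, so $p\neq 0$), but that sentence should be made explicit.
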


\begin{proof}
	The case where $S$ is $E$-unitary is shown in \cite{KelLawson}*{Theorem~3.7}. Notice that adding a zero to $S$ in this case we get a strongly $E^*$-unitary inverse semigroup $S\cup \{0\}$. 
In this case we get from the results in \cite{KelLawson} an isomorphism $\E \rtimes_\theta \Group\to S $, and the partial action of $\Group $ on $\E$ and the isomorphism $\E\rtimes _\theta \Group \to S$ are defined as in the statement. 
 
 To obtain the result in the general strongly $E^*$-unitary case, we can use the above case and the fact that every such inverse semigroup is a Rees quotient of an $E$-unitary inverse semigroup; this is observed in \cite{MilanSteinberg}*{Section~5}\footnote{We thank Benjamin Steinberg for pointing out this result to us.}.
	
More precisely, let $S$ be a strongly $E^*$-inverse semigroup, with an idempotent pure partial homomorphism $\pi \colon S^\times \to \Group$. Observe that $S$ is the ideal quotient of the $E$-unitary inverse semigroup 
	$$T:= \{ (s,g)\in S\times \Group : (s= 0) \text{ or } (s\ne 0 \text{ and } g= \pi (s))\}$$
	by the ideal $I=\{0\}\times \Group$. Using this, the result now follows from the $E$-unitary case.
\end{proof}

\section{The inverse semigroup of a separated graph}
\label{sect:InvSemSG}

In this section we generalize to the realm of separated graphs the notion of the inverse semigroup of a graph introduced by Ash and Hall \cite{AshHall}.  

\begin{definition}
	\label{def:graphsemigroup}
    Let $(E,C)$ be a separated graph. The \emph{semigroup} of $(E,C)$ is the universal semigroup $\S(E,C)$ 
    generated by $E^0\cup \hat{E}^1=E^0\cup E^1\cup E^{-1}$ subject to the following relations:
	\begin{enumerate}
		\item $vw\equiv\delta_{v,w}v$ for all $v,w\in E^0$;
		\item $s(x)x\equiv x$ for all $x\in \hat{E}^1$;
		\item $xr(x)\equiv x$ for all $x\in \hat{E}^1$;
		\item $e^{-1}f\equiv \delta_{e,f} r(e)$ for all $e,f\in X$ with $X\in C$.  
	\end{enumerate}
    The \emph{inverse semigroup} of $(E,C)$ is the universal inverse semigroup $\IS(E,C)$ generated by $E^0\cup \hat{E}^1$ 
    subject to the same above relations (1)--(4).
\end{definition}

We defined the semigroups $\S(E,C)$ and $\IS(E,C)$ above via presentations, that is, by specifying generators and relations. The existence of those follows by standard universal algebra, see \cite{lawson}*{Section~2.3} for more details on this. Concretely, one can describe the above semigroups as quotients of free semigroups. More specifically, let $\SI(E)$ be the free semigroup with zero on the set $E^0\cup \hat{E}^1$. Notice that $\SI(E)$ carries a unique involution that extends the identity map on $E^0$, i.e. $v^{-1}=v$ are self-adjoint for all $v\in E^0$, as well as the zero $0=0^{-1}$, and sends $E^1\ni e\to e^{-1}\in E^{-1}$. The semigroup $\S(E,C)$ is then the quotient semigroup $\SI(E)/\rho$, where $\rho $ is the congruence on $\SI(E)$ generated by the relations given by (1)-(4) above; the involution on $\SI(E)$ factors through $\S(E,C)$ so that $\S(E,C)$ is also a $*$-semigroup. And the inverse semigroup $\IS(E,C)$ is the quotient $\SI(E)/{\sigma}$, where $\sigma$ is the congruence on $\SI(E)$ generated by the relations (1)--(4) plus the following extra relation:
\begin{equation}\label{def:inversegraphsemigroup}
	 \tag{5} (xx^{-1})(yy^{-1})\equiv (yy^{-1})(xx^{-1})\quad\mbox{ for all }x,y\in \SI(E). 
\end{equation}
In principle, in order to get an inverse semigroup, we would also need to mod out by the relation
\begin{equation}\label{def:inversegraphsemigroup1}
	\tag{5'} xx^{-1}x\equiv x\quad\mbox{ for all }x\in \SI(E). 
\end{equation}
But this turns out to follow from (5), the reason being that relation (5') holds on the generators $x\in E^0\cup \hat{E}^1$ from (1)--(4), and then a standard argument shows that the commutativity relation (5) implies (5') for all $x\in \SI(E)$. 

It will be useful to work within the path *-semigroup $\mathcal P (E)$, which is the quotient $*$-semigroup $\SI (E)/\theta$, where $\theta$ is the congruence on $\SI (E)$ generated by the relations (1)-(3). The nonzero elements of $\mathcal P (E)$ are exactly the paths on the extended graph $\hat{E}$, and the product of two paths $\lambda $ and $\mu$ is given by its concatenation $\lambda \mu$ if $r(\lambda)= s(\mu)$, or $0$ is $r(\lambda) \ne s(\mu)$.  
In what follows, we will be mainly interested in the inverse semigroup $\IS(E,C)$ and will generally work within the path $*$-semigroup $\mathcal P (E)$, and thus $\lambda\equiv \mu$ will indicate that the elements $\lambda,\mu\in \mathcal P (E)$ represent the same element of $\IS(E,C)$. Clearly, $\IS(E,C)$ is also a quotient $*$-semigroup of $\S(E,C)$, namely, it is the quotient of $\S(E,C)$ by the congruenge generated by (5).

Let us consider some basic examples. 

\begin{example}
\label{exam:non-separated-first-one}
    First of all, if a graph $E$ is trivially separated, i.e., if $C$ consists of $C_v=s^{-1}(v)$ for $v\in E^0$, then $\S (E,C)=\IS (E,C)$ is already an inverse semigroup; it coincides with the {\it graph inverse semigroup} already studied before by many authors, see e.g. \cites{AshHall, JonesLawson, LaLonde-ConditionK, meakin-wang-2021, meakin-milan-wang-2021, mesyan-mitchell-2016}. We will denote in this paper the graph inverse semigroup of $E$ by $\mathcal S(E)$. For instance, for the graph $E$ with a single vertex $v$ and a single edge $e$ endowed (necessarily) with the trivial separation $C$, the inverse semigroup $\mathcal S(E)$ is the universal semigroup with zero $0$ and with $1=v\in \mathcal S(E)$ playing the role of the unit, and generated by a single element $e$ satisfying the relation $e^{-1}e=1$. In this case, there are no orthogonality relations involved, and $\mathcal S(E)^\times$ is also an inverse semigroup, known as the bicyclic monoid. It can be realized concretely as the semigroup of operators on $\ell^2(\N)$ generated by the shift operator $s\in \L(\ell^2(\N))$.  
\end{example}

For non-trivially separated graphs, $\S(E,C)$ is usually not an inverse semigroup and $\IS(E,C)$ is a proper quotient of $\S(E,C)$; indeed, the difference between both semigroups is usually huge. Let us now consider a class of non-trivial examples to get a better feeling for this.

\begin{example}
 \label{exam:free-separation-first-one}
Let $X$ be any non-empty set, and let $E_X$ be the graph with just one vertex $v$ and with $E_X^1 = X$. We consider the free separation $C=C_v= \{\{e\} : e\in X \}$ on $E_X$. 
In this case we do not need the zero element, since $\IS (E_X,C)^{\times}$ and  $\S (E_X,C)^\times $ are already semigroups.
    If $|X| >1$, we have $\S (E_X,C) \ne \IS (E_X,C)$. Indeed take two different edges $e,f\in X$. In the semigroup $\S (E_X,C)$ we have the relations $e^{-1}e=f^{-1}f=1$, but the elements $ee^{-1}ff^{-1}$ and  $ff^{-1}ee^{-1}$ are different elements in $\S (E,C)$, although they become equal in the inverse semigroup $\IS(E,C)$, by definition. To see that $ee^{-1}ff^{-1}\not= ff^{-1}ee^{-1}$ in $\S (E,C)$ one can use for instance a concrete representation of $\S (E_X,C)$ on a Hilbert space $H$, sending $e$ and $f$ to two isometries $s$ and $t$ respectively, such that their range projections $p=ss^*$, $q=tt^*$ do not commute. 

We therefore have surjective $*$-homomorphisms
$$\S (E_X,C)^\times \longrightarrow \IS (E_X,C)^\times \longrightarrow \F (X),$$
where $\F (X)$ is the free group on $X$. The first homomorphism $\S (E_X,C)^\times \to \IS (E_X,C)^\times$ is injective if and only if $|X| = 1$. The second homomorphism 
 $\IS (E_X,C)^\times \to\F (X)$ is never injective. We will give a description of this homomorphism in Example \ref{exam:free-separation-second-one}. 
\end{example}

We now show how to realize the free inverse monoid from the inverse semigroup of a certain separated graph. Figure~\ref{fig0} illustrates the case of the free inverse monoid on two generators.  
    
\begin{example}
\label{exam:free-inverse-monoid}
Let $X$ be any non-empty set. Here we realize the free inverse monoid $\FIM (X)$ as a corner of the inverse semigroup of a certain separated graph $(F_X, D)$. 
Let $F_X$ be the graph with $F_X^0 = \{ v\} \sqcup X$ and $F_X^1 = \{e_x,f_x : x\in X \}$, with $s(e_x) = s(f_x) = v$ and $r(e_x) = r(f_x) = x$ for all $x\in X$.
We take the free separation $D$ on $F_X$, so that $D_v = \{\{ e_x\},\{f_x\} : x\in X\}$ and $D_x= \emptyset$ for all $x\in X$. 
By the universal property of the free inverse monoid, there exists a unique monoid homomorphism  
$$\varphi \colon  \FIM (X) \longrightarrow v\IS (F_X,D)v, \quad \varphi (x) =  e_xf_x^{-1} \,\, \text{ for all }\, x\in X .$$
We will see in Example~\ref{exam:free-inverse-two} that the map $\varphi$ is an isomorphism onto
$(v\IS (F_X,D)v)^\times $, so that $(v\IS (F_X,D)v)^\times $ is a monoid (with identity $v$) isomorphic to the free inverse monoid $\FIM (X)$. 
\end{example}
   
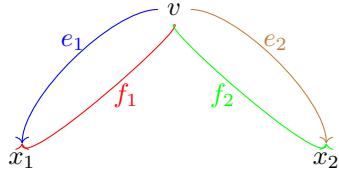
\begin{figure}[htb]
	\begin{tikzpicture}[scale=2]
		\node (v) at (1,1)  {$v$}; \node (x_1) at (0,0) {$x_1$};
		\node (x_2) at (2,0) {$x_2$};
		\draw[->,blue]  (v.west) ..  node[above]{$e_1$} controls+(left:3mm) and +(up:3mm) ..
		(x_1.north) ;
		\draw[->,red] (v.south) .. node[below, right]{$f_1$}  controls+(right:1mm) and +(down:2mm) ..
		(x_1.north);
		\draw[->,green] (v.south) .. node[below, left]{$f_2$}
		controls+(left:1mm) and +(down:2mm) ..
		(x_2.north);
		\draw[->,brown] (v.east) .. node[above]{$e_2$}
		controls+(right:3mm) and +(up:3mm) ..
		(x_2.north);
 \end{tikzpicture}
\caption{The separated graph for the free inverse monoid $\FIM (X)$ in the case where $|X| = 2$.}
        \label{fig0}
\end{figure}

We now proceed to develop a theory of ``Scheiblich normal form'' for the inverse semigroup 
$\IS(E,C)$ of a separated graph $(E,C)$ which is closely related to the similar theory for free inverse semigroups, see \cite{lawson}*{Section~6.2}. This will be connected with the geometric concept of a Munn $(E,C)$-tree in Section \ref{sect:Munntrees}.

Let $(E,C)$ be a separated graph, and let $\F = \F(E^1)$ be the free group on $E^1$. This is the set of all {\it reduced words} in $\SI (\hat{E}^1) \subseteq \SI (E)$ plus the empty word, that plays the role of the identity $1\in \F$.
Here $\SI (\hat{E}^1)$  is the free semigroup on $\hat{E}^1= E^1\cup E^{-1}$, and a reduced word is an element of $\SI (\hat{E}^1)$  which does not contain subwords of the form $xx^{-1}$, where $x\in E^1\cup E^{-1}$.
The product in $\F$ is defined by $s\cdot t = \red (st)$, where $\red (z)$ is the (unique) reduction of an element $z\in \SI (\hat{E}^1)$ to a reduced word. 

As said before, it will be useful here to work within the path $*$-semigroup $\mathcal P (E)$, whose nonzero elements are exactly the paths in $\hat{E}$. We now introduce an object that is a path analogue of the free group $\F = \F(E^1)$ that we considered before.
The construction resembles very much the one of $\F$ as reduced words in $\SI(\hat{E})$, but this time we will get only an inverse semigroup as the target of our construction.

\begin{definition}
    \label{def:FP} Let $E$ be a directed graph and let $\mathcal P (E)$ be the path $*$-semigroup of $E$. Let $\FP (E)$ be the set of all reduced paths in $\mathcal P(E)$, including the vertices of $E$, together with $0$. Here a reduced path is any path not containing subpaths of the form $xx^{-1}$, where $x\in E^1\cup E^{-1}$. We endow $\FP(E)$ with a product $\cdot$ given by 
    $$g\cdot h = \mbox{red} (gh),$$
    where for a path $z$, $\mbox{red} (z)$ is the unique path obtained from $z$ by using the reductions $xx^{-1} \rightarrow  s(x)$ for $x\in E^1\cup E^{-1}$, and $\mbox{red}(0)=0$. With this product $\FP (E)$ is an inverse semigroup with zero, and its nonzero idempotents are exactly the vertices of $E$. We have $g\cdot g^{-1} = s(g)$ and $g^{-1} \cdot g = r(g)$ for all $g\in \FP(E)^{\times}$. We call $\FP (E)$ the {\it fundamental inverse semigroup} of $E$. Observe that $\FP (E)^{\times}$, endowed with the partial product $g \ast  h = g\cdot h$ if $r(g)= s(h)$, is just the {\it fundamental groupoid} $\FG(E)$ of $E$. Hence $\FP (E)$ is the inverse semigroup obtained from the fundamental groupoid of $E$ by declaring any non-defined product to be $0$.   
    We have an obvious idempotent pure partial homomorphism $\omega  \colon \FG (E)= \FP(E)^\times \to \F$, hence we get from Proposition \ref{prop:KelLawson-result} a decomposition $\FP (E) = (E^0\cup \{0\})\rtimes^r_\mathfrak{t} \F$ as a restricted semidirect product of a partial action $\mathfrak{t}$ of $\F$ on $E^0\cup \{0\}$.
    \end{definition}

General nonzero elements of $\mathcal P(E)$ will be called \emph{paths}, and nonzero elements of $\FP(E)$ will be called \emph{reduced paths}. 

A basic ingredient for the Scheiblich normal form is the notion of a $C$-separated path: 

\begin{definition}
	\label{def:Cseparatedword-and-compatible}
	A {\it $C$-separated path} is a reduced path $w=y_1\cdots y_n\in \FP(E)$, with $y_i\in \hat{E}^1$, that does not contain any subpath of the form $x^{-1}y$ with $x,y\in X$ for $X\in C$. We denote the set of all $C$-separated paths of $\FP(E)$ by $\FC$. For $v\in E^0$, set
	$$\FC (v) = \{ w\in \F_C \mid s(w)= v \} .$$
	Note that $\FC = \bigsqcup_{v\in E^0} \FC (v)$. We define the \emph{prefix order} $\le_p$ on $\FC$ by $\mu\leq _p\nu$ if there is $\gamma\in \mathcal P (E)$ such that $\nu=\mu\gamma$. Note that in this case we automatically have $\gamma \in \FC (r(\mu))$. 
\end{definition}

We now come to the crucial concept of $C$-compatibility of $C$-separated paths.
Intuitively two $C$-separated paths $\gamma, \nu \in \FC(v)$ are $C$-compatible if the corresponding idempotents $\gamma \gamma^{-1}$ and $\nu\nu^{-1}$ have a nonzero product in $\IS(E,C)$. 

\begin{definition}
	\label{def:Ccompatible}
	Let $v\in E^0$ and $\gamma,\nu\in \FC (v)\setminus \{v\}$. We say that $\gamma $ and $\nu$ are {\it $C$-compatible} in case $\red (\nu^{-1}\gamma) \in \FC$. Otherwise $\gamma $ and $\nu$ are said to be {\it $C$-incompatible}. In addition, we say that the vertex $v\in \FC (v)$ is $C$-compatible with all the elements of $\FC (v)$.
	A subset $T$ of $\FC(v)$ is said to be $C$-compatible if each pair of elements  of $T$ is $C$-compatible. 
	\end{definition}
	
\begin{remark}
	\label{rem:onCcompatibility}
Note that $\gamma $ and $\nu$ are $C$-incompatible if and only if 
	we can write $\gamma = ux\gamma '$ and $\nu = uy\nu'$, where $u$ is the largest common prefix of $\gamma $ and $\nu$, and $x,y$ are distinct elements of some set $X\in C$. It is clear that $C$-compatibility is a reflexive and symmetric relation. In terms of the Cayley graph of $\FG (E)$, two paths $\gamma $ and $\nu$ are $C$-compatible if the geodesic path joining $\gamma $ and $\nu$ is a $C$-separated path.   
	\end{remark}

Note also that two elements $g,h\in \FC (v)$ are $C$-compatible exactly when the product $g^{-1}\cdot h$ belongs to $\FC$.   	 

	For a non-empty subset $A$ of $\FC$, we shall write $A^{\downarrow}=\{x\in \FC: x\leq_p a,\, \mbox{for some }a\in A\}$ for the lower subset  generated by $A$ with respect to the prefix order. If $A=\{a\}$ is a singleton, we shall also write $a^\downarrow:=\{a\}^\downarrow$ to simplify the notation.

We also need the concept of $C$-separated string.

\begin{definition}
	\label{def:Cseparatedstring}
	A \emph{$C$-separated string} is a path $\gamma = y_1y_2\cdots y_n$ in $\mathcal P (E)$, where $y_i\in E^1\cup E^{-1}$, such that it does not contain any subpath of the form $e^{-1}f$, where $e,f\in X$ for $X\in C$. Paths of length $0$, i.e., vertices $v\in E^0$ are also considered as $C$-separated strings.
	
	Note that the only difference between a non-trivial $C$-separated string and a non-trivial $C$-separated path is that a non-trivial $C$-separated string may contain subwords of the form $ee^{-1}$, where $e\in E^1$.  
	\end{definition}

The following lemma gives a first approximation to the normal form of elements in $\IS(E,C)$ that we will soon obtain. 

\begin{lemma}
	\label{lem:Scheiblich-form}
	Let $\gamma$ be a path in $\mathcal P (E)$ representing an element in $\IS(E,C)^\times$. Then 
	$$\gamma \equiv (\gamma_1\gamma_1^{-1})\cdots (\gamma_n \gamma _n^{-1})\lambda$$
	where $\gamma_i$ and $\lambda$ are $C$-separated paths, $\lambda\lambda^{-1}\gamma_j\gamma_j^{-1}= \gamma_j\gamma_j^{-1}$ for some $j$, and $\{\gamma_1,\dots ,\gamma_n\}$ is a $C$-compatible family.
\end{lemma}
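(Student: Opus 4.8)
The plan is to prove existence of the asserted form by induction on the length of the path $\gamma=y_1\cdots y_m$ (with $y_i\in\hat{E}^1$), peeling off the last letter; equivalently, I show that the set of elements of $\IS(E,C)^\times$ admitting such an expression is closed under right multiplication by each generator $y\in E^0\cup\hat{E}^1$. The base cases $m\le 1$ are immediate: a vertex $v$ is $(vv^{-1})v$, an edge $e$ is $(ee^{-1})e$, and $e^{-1}$ is $\big(e^{-1}(e^{-1})^{-1}\big)e^{-1}$, all of which have the required form with a single branch. For the inductive step I assume $y_1\cdots y_{m-1}\equiv(\gamma_1\gamma_1^{-1})\cdots(\gamma_n\gamma_n^{-1})\lambda$ is in the stated form (if this prefix represents $0$ then so does $\gamma$, which is excluded) and analyse $\big(\prod_i\gamma_i\gamma_i^{-1}\big)\lambda y_m$.

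The three algebraic facts I will use repeatedly are: the commutation of idempotents, relation~(5); the identity $ap=(apa^{-1})a$, valid for any element $a$ and any idempotent $p$ in an inverse semigroup; and the separation relation~(4) in the sharp form $x^{-1}f=0$ whenever $x\neq f$ lie in a common $X\in C$. I will also isolate one purely combinatorial lemma about $\FC$: prefixes of compatible paths are compatible, i.e.\ if $\mu\leq_p\mu'$ and $\mu'$ is $C$-compatible with $\nu$, then $\mu$ is $C$-compatible with $\nu$ (this follows from Remark~\ref{rem:onCcompatibility}, since the divergence edges of $\mu$ and $\mu'$ against $\nu$ at their meet in $\FG(E)$ coincide), together with the observation that $\FC$ is closed under inversion, so that $\leq_p$-comparable paths are automatically compatible. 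These ensure that $\lambda$, although not itself listed among the $\gamma_i$, is $C$-compatible with every $\gamma_i$ by virtue of $\lambda\leq_p\gamma_j$.

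The inductive step is then a case analysis on $y_m$. Multiplication by a vertex either fixes the expression or yields $0$. If $y_m=e\in E^1$ and $\lambda$ ends in $e^{-1}$, the product shortens $\lambda$ and the prefix condition is inherited; if $\lambda$ ends in $x^{-1}$ with $x\neq e$ in a common $X\in C$, the product is $0$. Otherwise $\lambda e\in\FC$, and either $\lambda e\leq_p\gamma_j$ for some $j$ --- keeping the family --- or $\lambda e$ must be adjoined as a new branch with new endpoint $\lambda e$. The delicate point is that adjoining $\lambda e$ is legitimate exactly when the family stays $C$-compatible: by the prefix lemma, the only way $\lambda e$ could be $C$-incompatible with some $\gamma_i$ is that $\gamma_i=\lambda f\zeta$ with $f\neq e$ in a common $X\in C$; but then, commuting the idempotent $ff^{-1}$ next to $e$ and using $ff^{-1}e=f(f^{-1}e)=0$, one obtains $(\gamma_i\gamma_i^{-1})\lambda e\equiv 0$, so the whole product vanishes. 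Hence a non-zero product always produces a $C$-compatible family. Finally, if $y_m=e^{-1}$ and $\lambda$ does not end in $e$, then $\lambda e^{-1}\in\FC$ and we proceed as before (no separation obstruction arises, since backward edges lie in no $X$); and if $\lambda=\lambda'e$ ends in $e$, then $\lambda e^{-1}\equiv\lambda' ee^{-1}$, and applying $ap=(apa^{-1})a$ with $a=\lambda'$, $p=ee^{-1}$ pulls the idempotent $(\lambda'e)(\lambda'e)^{-1}=\lambda\lambda^{-1}$ to the front, giving the new form $\big(\prod_i\gamma_i\gamma_i^{-1}\big)(\lambda\lambda^{-1})\lambda'$ with $\lambda$ adjoined as a branch and shortened endpoint $\lambda'\leq_p\lambda$.

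The main obstacle is the simultaneous bookkeeping of the two structural constraints --- $C$-compatibility of the branch family $\{\gamma_i\}$ and the prefix condition $\lambda\leq_p\gamma_j$ --- across the inductive step. The crux is the forward-edge case: I must show that whenever appending $e$ would create a branch $C$-incompatible with the family, the separation relation forces the product to be $0$. This is what makes the whole procedure consistent, and it is exactly here that the interplay between idempotent commutativity~(5) and the separation relation~(4) is essential.
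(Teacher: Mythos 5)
Your proof is correct, but it is organized quite differently from the paper's. The paper first applies relation (4) to reduce $\gamma$ to a $C$-separated string, then runs a recursion that peels off the \emph{longest reduced prefix} $\gamma_1$ at the first fold, writing $\gamma\equiv\gamma_1\gamma_1^{-1}(\gamma_1'\nu')$ with $\gamma_1'\nu'$ a strictly shorter string (not in normal form), and recursing; $C$-compatibility of the resulting family is established only \emph{a posteriori}, by showing that an incompatible pair $\gamma_i=\eta x\tau_i$, $\gamma_j=\eta y\tau_j$ forces $(\gamma_i\gamma_i^{-1})(\gamma_j\gamma_j^{-1})\equiv 0$ via exactly the commutation-plus-separation computation you use. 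You instead process the word letter by letter from the right, maintaining the full Scheiblich expression as an invariant under right multiplication by generators and checking compatibility at each adjoining step. The algebraic engine is identical (relations (4), (5), and $ss^{-1}s=s$), so the two proofs are close in substance; but yours is a genuine rewriting/automaton argument which, in effect, constructs the Munn tree of $\gamma$ edge by edge --- the algebraic counterpart of the tree traversal used later in the proof of Proposition \ref{prop:MunnE-trees} --- and it additionally needs the prefix-compatibility lemma (prefixes of $C$-compatible paths are $C$-compatible), which the paper only invokes later, in Proposition \ref{prop:bijection-max-lower-sets}. What your route buys is an incremental solution of the word problem and a stronger invariant; what the paper's route buys is brevity and a decomposition that feeds directly into the cleanup step of Proposition \ref{prop:Scheiblich-form}. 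One point you should make explicit: your induction hypothesis has to be the prefix-order condition $\lambda\le_p\gamma_j$, which is strictly stronger than the condition $\lambda\lambda^{-1}\gamma_j\gamma_j^{-1}=\gamma_j\gamma_j^{-1}$ appearing in the statement (the latter also holds, for instance, when $\lambda=\gamma_j x^{-1}$, where the prefix relation fails); your case analysis does preserve the stronger condition in every branch --- including the case $y_m=e^{-1}$ with $\lambda=\lambda'e$, where the new branch $\lambda$ and new endpoint $\lambda'$ keep it --- so the argument is sound, but the statement proved by induction should be phrased with this stronger invariant.
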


\begin{proof} Let $\gamma $ be a path in $\mathcal P (E)$.
	Applying the reductions (4) in Definition \ref{def:graphsemigroup}, we will arrive, after a finite number of steps, to either a $C$-separated string or to $0$.  
	
	We can thus assume that $\gamma $ is a $C$-separated string in $\mathcal P (E)$.  
	 If $\gamma $ is a vertex $v\in E^0$, then we arrive at the desired form, namely $v\equiv (vv^{-1})v$, with $vv^{-1}(vv^{-1}) = vv^{-1}$. If $\gamma$ is already a $C$-separated path, then $\gamma \equiv  (\gamma \gamma^{-1})\gamma$ is the desired decomposition. If $\gamma $ is not reduced, then let $\gamma_1$ be the longest prefix of $\gamma$ which is reduced as it stands. Obviously, $\gamma_1$ is a $C$-separated path.
	Then we have $\gamma =\gamma_1\nu$, where $\gamma_1,\nu$ are both non-trivial strings. Let $\delta$ be the longest suffix of $\gamma_1$ such that $\delta^{-1}$ is a prefix of $\nu$. Then $\gamma_1 = \gamma_1' \delta$ and $\nu= \delta^{-1}\nu'$ for some paths $\gamma_1 ',\nu'$, so that $\gamma= \gamma_1'\delta\delta^{-1}\nu'$. But $ \gamma_1'\delta\delta^{-1}\nu'\equiv (\gamma_1'\delta \delta^{-1} (\gamma_1')^{-1})(\gamma_1'\nu')= \gamma_1\gamma_1^{-1}(\gamma_1'\nu')$. Now observe that $\gamma_1'\nu'$ is a string whose length is strictly less than the length of $\gamma$, and moreover $s(\gamma_1'\nu')= s(\gamma)$. The string  $\gamma_1'\nu'$ is not necessarily $C$-separated but, in case it is not $C$-separated, we can reduce it using relation (4) in Definition \ref{def:graphsemigroup} either to a shorter $C$-separated string or to $0$. Hence, proceeding by induction, and assuming that $\gamma $ does not represent $0$ in $\IS(E,C)$, we will obtain a decomposition
	$$\gamma \equiv (\gamma_1\gamma _1^{-1})\cdots (\gamma _n\gamma_n^{-1})\lambda,$$ 
	where $\gamma_1,\dots , \gamma_n$ are non-trivial mutually distinct $C$-separated paths, with $s(\gamma _i)= s(\gamma_j)$ for all $i,j$, and with $\lambda = \gamma_n$ or $\lambda = s(\gamma)$. Suppose that $\gamma _i$ and $\gamma _j$ are not $C$-compatible. Then $\gamma_i = \eta x\tau_i$ and $\gamma_j = \eta y\tau_j$, for some paths $\eta,\tau_i,\tau_j$, with $x,y\in X$ for some $X\in C$ and $x\ne y$. But then
	\begin{align*}
	(\gamma_i\gamma_i^{-1})(\gamma_j\gamma_j^{-1}) & \equiv  \eta (x\tau_i \tau_i^{-1}x^{-1}) (\eta^{-1}\eta) y \tau_j \gamma_j^{-1} \\
	& \equiv \eta (\eta^{-1}\eta)(x\tau_i \tau_i^{-1}x^{-1}) y \tau_j \gamma_j^{-1} \\
	& \equiv  \eta (x\tau_i \tau_i^{-1})(x^{-1} y) \tau_j \gamma_j^{-1}  \\
	& \equiv 0,
\end{align*}
so that $\gamma \equiv 0$. Therefore the family $\{ \gamma_1,\dots, \gamma_n\} $ must be $C$-compatible.       	  
\end{proof}

We can thus obtain the Scheiblich normal form as follows.

\begin{proposition}
	\label{prop:Scheiblich-form}
	Let $\gamma$ be a string in $\mathcal P (E)$ representing an element in $\IS(E,C)^\times$. Then we have 
	$$\gamma \equiv (\gamma_1\gamma_1^{-1})\cdots (\gamma_n \gamma _n^{-1})\lambda$$
	where $\{\gamma_1,\dots ,\gamma_n\}$ is an incomparable $C$-compatible family of $C$-separated paths, $\lambda$ is a $C$-separated path, the paths $\gamma_j$ do not end in $E^{-1}$, $s(\gamma_j)= s(\gamma)$ for $j=1,\dots ,n$,  and $\gamma_i\gamma_i^{-1}\equiv (\lambda \lambda^{-1}) (\gamma_i\gamma_i^{-1})$ for some $i$.
\end{proposition}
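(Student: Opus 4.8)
The plan is to obtain the refined normal form by massaging the decomposition already produced by Lemma~\ref{lem:Scheiblich-form}. That lemma gives $\gamma \equiv (\gamma_1\gamma_1^{-1})\cdots(\gamma_n\gamma_n^{-1})\lambda$ with $\{\gamma_1,\dots,\gamma_n\}$ a $C$-compatible family of $C$-separated paths, $\lambda$ a $C$-separated path, $s(\gamma_i)=s(\gamma)$ for all $i$, and $\gamma_{j}\gamma_{j}^{-1}\le \lambda\lambda^{-1}$ (in the semilattice of idempotents) for at least one $j$. What is missing for the Proposition is that (i) no $\gamma_j$ ends in a letter of $E^{-1}$, and (ii) the family is incomparable for the prefix order $\le_p$. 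So I would perform two reduction moves on the family, keeping $\lambda$ fixed, and check that each move preserves the product, the $C$-compatibility, and the domination condition on $\lambda$. Throughout I work in $\mathcal P(E)$ and use that the idempotents $\gamma_i\gamma_i^{-1}$ commute, so the idempotent factor $(\gamma_1\gamma_1^{-1})\cdots(\gamma_n\gamma_n^{-1})$ is their meet and is unchanged by reordering, by deleting a factor lying above another, or by replacing a factor with an equal idempotent.

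The first move removes trailing inverse letters. If some $\gamma_i=\gamma_i'e^{-1}$ ends with $e^{-1}$, $e\in E^1$, then $\gamma_i\gamma_i^{-1}=\gamma_i'e^{-1}e(\gamma_i')^{-1}\equiv \gamma_i'(\gamma_i')^{-1}$, using relation~(4) of Definition~\ref{def:graphsemigroup} in the form $e^{-1}e=r(e)$ together with relation~(3). Iterating, I replace each $\gamma_i$ by its longest prefix not ending in $E^{-1}$ (the vertex $s(\gamma_i)$ if $\gamma_i$ consists only of inverse letters); since this leaves every $\gamma_i\gamma_i^{-1}$, hence the whole product, unchanged, the source $s(\gamma_i)=s(\gamma)$ is also unaffected. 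A prefix of a reduced path with no forbidden subpath is again $C$-separated, and $C$-compatibility is preserved under passing to prefixes: a divergence of two prefixes through distinct edges of a common $X\in C$ would already be a divergence of the original paths (Remark~\ref{rem:onCcompatibility}). Crucially, $\lambda$ is left untouched; this is exactly why the statement forbids trailing $E^{-1}$ only on the $\gamma_j$ and not on $\lambda$, since truncating $\lambda$ would alter its image in $\F$ and destroy the Scheiblich form.

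The second move passes to an incomparable subfamily. Whenever $\gamma_i\le_p\gamma_j$ with $i\ne j$, writing $\gamma_j=\gamma_i\delta$ one computes $(\gamma_i\gamma_i^{-1})(\gamma_j\gamma_j^{-1})=\gamma_j\gamma_j^{-1}$, i.e.\ $\gamma_j\gamma_j^{-1}\le\gamma_i\gamma_i^{-1}$, so $\gamma_i\gamma_i^{-1}$ is absorbed and may be dropped. Discarding every non-maximal path (and identifying repetitions) retains precisely the $\le_p$-maximal elements, which form an incomparable family whose idempotent product equals the original one; $C$-compatibility survives since a subfamily of a $C$-compatible family is $C$-compatible. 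To see the domination condition persists, fix the index $j_0$ with $\gamma_{j_0}\gamma_{j_0}^{-1}\le\lambda\lambda^{-1}$ from Lemma~\ref{lem:Scheiblich-form}: its truncation has the same idempotent, and if it was discarded there is a surviving maximal $\gamma_k$ with $\gamma_{j_0}\le_p\gamma_k$, whence $\gamma_k\gamma_k^{-1}\le\gamma_{j_0}\gamma_{j_0}^{-1}\le\lambda\lambda^{-1}$. Thus some surviving $\gamma_i$ satisfies $\gamma_i\gamma_i^{-1}\equiv(\lambda\lambda^{-1})(\gamma_i\gamma_i^{-1})$, as required.

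The step I expect to be most delicate is keeping all four requirements true simultaneously, because the two moves interact: truncation can create new comparabilities, while the antichain step can delete the very factor that witnessed the domination by $\lambda$. The argument resolves this by carrying the inequality purely at the level of idempotents, where truncation is an \emph{equality} and extending a path only \emph{decreases} the idempotent, so the witness can always be transferred to a surviving maximal element. The only genuine edge case is collapse to the source: if every $\gamma_i$ truncates to $v=s(\gamma)$, all their idempotents equal the top idempotent $v$, and the Lemma's domination gives $v\le\lambda\lambda^{-1}$, forcing $\lambda\lambda^{-1}=v$; one then takes $n=1$ and $\gamma_1=v$, so that $\gamma\equiv(vv^{-1})\lambda=\lambda$ with $\gamma_1\gamma_1^{-1}=v=\lambda\lambda^{-1}$, and all conditions hold (note $\lambda$ itself may be nontrivial, e.g.\ $\lambda=e^{-1}$). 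In every other case at least one maximal $\gamma_i$ is a genuine path not ending in $E^{-1}$, the incomparable family is nonempty, and the proof is complete.
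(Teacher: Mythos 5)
Your proposal is correct and follows essentially the same route as the paper's own proof: starting from Lemma~\ref{lem:Scheiblich-form}, truncating each path to its longest prefix not ending in $E^{-1}$ (using $\nu_i\nu_i^{-1}\equiv\nu_i'(\nu_i')^{-1}$), passing to the $\le_p$-maximal elements via the absorption identity, and transferring the domination condition on $\lambda$ through the resulting idempotent inequalities. The only (harmless) difference is that you rely solely on the stated conclusion of the Lemma rather than on the internal fact, used in the paper, that $\lambda$ equals one of the $\nu_j$ or $s(\gamma)$, and you spell out the vertex-collapse edge case that the paper leaves implicit.
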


\begin{proof}
	Let 
	$$\gamma \equiv (\nu_1\nu_1^{-1}) \cdots (\nu_r \nu_r^{-1}) \lambda$$
	be the decomposition obtained in 
	Lemma \ref{lem:Scheiblich-form}. Write $\nu_i = \nu_i'w_i$, where $\nu_i'$ does not end in $E^{-1}$, and $w_i$ is a (possibly empty) product of inverse edges $x^{-1}\in E^{-1}$. Note that $\nu_i \nu_i^{-1} \equiv  (\nu_i')(\nu_i')^{-1}$ for all $i$. Moreover, since each $\nu_i'$ is an initial segment of $\nu_i$, the family $\{\nu_1',\dots , \nu_r'\}$ is $C$-compatible. Observe that if $\nu_i'\le_p \nu_j'$, then with $\nu_j'= \nu_i'\nu_j''$, we have
	$$(\nu_i'(\nu_i')^{-1})(\nu_j'(\nu_j')^{-1})= (\nu_i'(\nu_i')^{-1}\nu_i')(\nu_j''(\nu_j')^{-1}) \equiv \nu_i'\nu_j''(\nu_j')^{-1}= \nu_j'(\nu_j')^{-1}$$
	Let $\{\gamma_1,\dots ,\gamma _n\}= \max \{\nu_1',\dots , \nu_r'\}$, where the max is taken in the prefix order $\le_p$, that is, $\{ \gamma_1,\dots ,\gamma_n\}$ is the family of maximal elements of the family $\{ \nu_1',\dots , \nu_r'\}$ with respect to $\le_p$. Then for each $j=1,\dots , r$ there exists $i\in \{1,\dots ,n\}$ such that $\nu_j' \le_p \gamma_i$. Hence, using the above computations, we get
	$$\gamma \equiv (\nu_1\nu_1^{-1})\cdots (\nu_r\nu_r^{-1})\lambda \equiv 
	(\nu_1'(\nu_1')^{-1})\cdots (\nu_r'(\nu_r')^{-1})\lambda \equiv (\gamma_1\gamma_1^{-1})\cdots (\gamma_n\gamma_n^{-1}) \lambda.$$
	Finally there is some $j$ such that $\lambda = \nu_j$ and there is some $i$ such that $\nu_j'\le_p \gamma _i$, hence
	$$(\lambda \lambda^{-1})(\gamma _i\gamma_i^{-1})= (\nu_j\nu_j^{-1}) (\gamma _i\gamma_i^{-1}) \equiv  (\nu_j'(\nu_j')^{-1}) (\gamma _i\gamma_i^{-1}) \equiv \gamma_i \gamma_i^{-1},$$
	as desired. 
\end{proof}

 Note that, combining the proofs of Lemma \ref{lem:Scheiblich-form} and of Proposition \ref{prop:Scheiblich-form} we get an algorithm to find a Scheiblich normal form. We will see soon that this form is unique, and that all elements in Scheiblich normal form are nonzero in $\IS(E,C)$, thus solving the word problem for these inverse semigroups. 

	We will now construct a set $\Y$ that will parametrize the idempotents of the inverse semigroup $\IS(E,C)$; it is the analogue of the set denoted in the same way considered in the study of the free inverse semigroup in \cite{lawson}*{Section~6.2}. 
		
	For $v\in E^0$, consider, with respect to the prefix order, the set $\mathcal I (v)$ of non-empty finite subsets $I$ of $\FC (v)$ such that the elements of $I$ are  pairwise incomparable and pairwise $C$-compatible. Consider also the set $\Y (v)$ of all non-empty finite lower $C$-compatible subsets of $\FC (v)$. (Here a lower subset of $\FC(v)$ is a subset $A$ of $\FC (v)$ such that $\lambda \in A$ whenever $\lambda \le _p \mu$ and $\mu \in A$.) We define
	$$\Y:= \bigsqcup _{v\in E^0} \Y(v) \bigsqcup \{0\}.$$
	Then $\Y$ is a semilattice with respect to the partial order $\le $ defined by $A\le B$ if $A,B\in \Y (v)$ for some $v\in E^0$ and $B\subseteq A$, and $0\le A$ for each $A\in \Y$. Notice that the meet in $\Y$ is given by
	$$
	A\wedge B = \begin{cases} A\cup B & \text{ if } A,B\in \Y(v) \text{ for some } v\in E^0 \text{ and } A\cup B \text{ is }C \text{-compatible} \\\quad  0 & \text{ otherwise  } .	\end{cases} $$
	Observe that 
$\Y (v)\perp \Y (w)$ for distinct $v,w\in E^0$. We will denote by $\Y^\times$ the set of all nonzero elements of $\Y$.    
 
	\begin{proposition}\label{prop:bijection-max-lower-sets}
		There is a bijection between $\Y (v)$ and $\mathcal I (v)$ sending $A\in \Y (v)$ to $\max (A)\in \mathcal I (v)$, where $\max(A)$ denotes the set of maximal elements of $A$. The inverse bijection sends $A\in \mathcal I (v)$ to $A^{\downarrow}\in \Y (v)$. 
	\end{proposition}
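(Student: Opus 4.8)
The plan is to verify that the two assignments $A\mapsto\max(A)$ and $A\mapsto A^{\downarrow}$ are well-defined maps $\Y(v)\to\mathcal I(v)$ and $\mathcal I(v)\to\Y(v)$, and then to check that they are mutually inverse. Almost every ingredient is formal once one controls how $C$-compatibility interacts with the prefix order $\le_p$, so I would isolate that point and treat everything else as bookkeeping about finite posets.

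First I would check that $\max$ lands in $\mathcal I(v)$. Given $A\in\Y(v)$, the set $\max(A)$ is a non-empty finite subset of $\FC(v)$ (non-empty because $A$ is finite and non-empty); its elements are pairwise incomparable by the very definition of maximality in a poset, and they are pairwise $C$-compatible since $\max(A)\subseteq A$ and $A$ is $C$-compatible. Conversely, for $A\in\mathcal I(v)$ I would check $A^{\downarrow}\in\Y(v)$: it is a lower subset of $\FC(v)$ by construction and non-empty since $A\subseteq A^{\downarrow}$, and it is finite because each $a\in A$ has at most $|a|+1$ prefixes in $\FC(v)$ and $A$ is finite. (Membership of each prefix in $\FC(v)$ is automatic, being a reduced path starting at $v$ that inherits the absence of forbidden subpaths $x^{-1}y$.)

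The main obstacle is showing that $A^{\downarrow}$ is $C$-compatible; this reduces to the claim that \emph{$C$-compatibility is preserved under passing to prefixes}: if $\gamma,\nu\in\FC(v)$ are $C$-compatible and $\gamma'\le_p\gamma$, $\nu'\le_p\nu$, then $\gamma'$ and $\nu'$ are $C$-compatible. I would prove the contrapositive using Remark~\ref{rem:onCcompatibility}. If $\gamma'$ and $\nu'$ were $C$-incompatible, write $\gamma'=ux\alpha$ and $\nu'=uy\beta$, where $u$ is their largest common prefix and $x\ne y$ lie in some $X\in C$. Since $\gamma'\le_p\gamma$ and $\nu'\le_p\nu$, the path $u$ is still a common prefix of $\gamma$ and $\nu$, and the letters immediately following $u$ in $\gamma$ and $\nu$ are exactly $x$ and $y$; as $x\ne y$ these diverge, so $u$ remains the largest common prefix of $\gamma$ and $\nu$. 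Remark~\ref{rem:onCcompatibility} then exhibits $\gamma$ and $\nu$ as $C$-incompatible, contradicting our hypothesis. Applying this to every pair drawn from $A$ yields pairwise $C$-compatibility of $A^{\downarrow}$.

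Finally I would establish that the two maps invert each other. For $A\in\mathcal I(v)$ I would show $\max(A^{\downarrow})=A$: each $a\in A$ is maximal in $A^{\downarrow}$, since $a<_p b\le_p c$ with $b\in A^{\downarrow}$ and $c\in A$ would force $a<_p c$, contradicting the incomparability of the elements of $A$; conversely any maximal $m\in A^{\downarrow}$ satisfies $m\le_p c$ for some $c\in A\subseteq A^{\downarrow}$, so maximality forces $m=c\in A$. For $A\in\Y(v)$ I would show $(\max A)^{\downarrow}=A$: since $A$ is a lower set, $A^{\downarrow}=A$, whence $(\max A)^{\downarrow}\subseteq A^{\downarrow}=A$; and finiteness of $A$ guarantees that every element of $A$ lies below some maximal element, giving the reverse inclusion $A\subseteq(\max A)^{\downarrow}$. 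This yields the asserted bijection with the two stated formulas for the maps and their inverses.
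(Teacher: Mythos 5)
Your proof is correct and takes essentially the same approach as the paper: the paper's entire proof consists of the observation that $C$-compatibility is inherited under passing to prefixes (which you establish carefully via the contrapositive and Remark~\ref{rem:onCcompatibility}), after which it defers the remaining finite-poset bookkeeping to \cite{lawson}*{Proposition 6.7}. You have simply written out in full the details that the paper declares ``clear.''
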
  
\begin{proof}
	It is clear that if $\gamma_1\le_p \gamma $, $\nu_1\le_p \nu$ and $\gamma $, $\nu$ are $C$-compatible, then $\gamma _1$ and $\nu_1$ are also $C$-compatible. Hence the proposition is clear, and the proof follows the lines of the one in \cite{lawson}*{Proposition 6.7}.
\end{proof}
	
For $I\in \Y$, we denote by $\PI I$ the principal order ideal of $\Y$ generated by $I$. Notice that with respect to the order of $\Y$, the principal ideal $\PI I$ also equals its associated lower set, but we avoid the use of the lower set notation here because we want to reserve that for the prefix order of $\FC$, as in the following definition, where $g^\downarrow$ means the lower set of $\{g\}$ with respect to the prefix order for $g\in \FC$. Notice that $g^\downarrow\in \Y$.

\begin{definition}
\label{def:first-expansion}
	Let $(E,C)$ be a separated graph, and consider the congruence $\sim$ on $\Y$ generated by
	\begin{equation}
		\label{eq:EXP1}
		g^\downarrow \sim (gx^{-1})^{\downarrow}
			\end{equation}
	for any $g\in \FC$ and any $x\in E^1$ such that $\red (gx^{-1})= gx^{-1}$ (that is, $g$ does not end with $x$). Note that $gx^{-1}$ also belongs to $\FC$.   
\vskip 0,5pc
We denote the quotient semilattice $\Y/{\sim}$ by $\YY$, and the class of $I\in \Y$ in $\YY$ by $[I]$. 
\end{definition}

 The idea for the above congruence relation is the following: at the end we want to describe idempotents of our inverse semigroup $\IS(E,C)$ as certain finite products of the form $\alpha_1\alpha_1^{-1}\cdots\alpha_n\alpha_n^{-1}$ for certain words $\alpha_i$. And if a word $\alpha$ has the form $\alpha=\beta e^{-1}$ with $e\in E^1$ and $r(\beta)=r(e)$, then we can simplify $\alpha\alpha^{-1}$ to $\beta\beta^{-1}$ in $\IS(E,C)$ using $e^{-1}e\equiv r(e)$.

\begin{notation}\label{not:prefix-dead-ends}
    Each element $g\in \FG(E)$ can be uniquely expressed as a reduced product $g= g_0w$, where $g_0$ is a prefix of $g$ such that $g_0$ does not end in $E^{-1}$ and $w$ is a (possibly empty) product 
	of terms in $E^{-1}$.  
    Notice that $g_0\leq_p g$, and in this situation we also say that $g$ extends $g_0$.

    If $0\not=I\in \Y$ and $g\in I$, then $g_0\in I$ as $I$ is a lower set. 
    Let $A$ be the set of maximal elements of $\{g_0 : g\in I\}$ with respect to $\le_p$, and set $I_0:= A^{\downarrow}$. If $I=0$, we set $I_0:=0$.  In any case, $I_0\in \Y$ and $\max (I_0)= A$ by Proposition~\ref{prop:bijection-max-lower-sets}, so all elements of $\max (I_0)$ do not end in $E^{-1}$.
\end{notation}

\begin{lemma}\label{lem:maxelements}
	Let $I,J\in \Y^{\times}$. Then $[I]=[J]$ in $\YY$ if and only if $I_0=J_0$, and $[I]\le [J]$ in $\YY$ 
    if and only if $J_0 \subseteq I_0$. 

	Moreover, for each non-zero equivalence class $[I]\in \YY$, the element $I_0\in \Y$ is the maximum representative of the class $[I]\in \YY$.
 
    Similarly, for each $g\in \FC$ such that $g\in I$ for some $I\in\Y$, there exists a maximum element $I_g\in [I]$ such that $g\in I_g$, namely $I_g=I_0 \wedge g^\downarrow=
	I_0 \cup   g^\downarrow$. 
	\end{lemma}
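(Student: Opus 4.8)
The plan is to show that the ``forward reduction'' $I\mapsto I_0$ is a complete invariant for the congruence $\sim$, and then to read off all three assertions from this. The starting observation is that $I_0\subseteq I$ for every $I\in\Y^\times$: if $g\in I$ then $g_0\le_p g$ and $I$ is a lower set, so $g_0\in I$; hence every element of $\mathrm{fwd}(I):=\{g_0:g\in I\}$ lies in $I$, and so does its lower set $I_0$. A short length argument also gives $I_0=\mathrm{fwd}(I)^\downarrow=\{g_0:g\in\max(I)\}^\downarrow$, using that if $h\le_p g$ then $h_0\le_p g_0$ (a forward-ending prefix of $g$ cannot reach past $g_0$, since beyond $g_0$ the path $g$ has only edges in $E^{-1}$).

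First I would prove that $I\sim I_0$, so that $I_0$ is always a representative of $[I]$. Writing $I=\bigwedge_{g\in\max(I)}g^\downarrow$ (a meet in $\Y$, legitimate since all the $g^\downarrow$ are $\le$-above $I$ and mutually $C$-compatible), it suffices to check $g^\downarrow\sim (g_0)^\downarrow$ for each maximal $g$. If $g=g_0x_1^{-1}\cdots x_k^{-1}$ with $x_i\in E^1$, then each prefix $g_0x_1^{-1}\cdots x_j^{-1}$ is reduced and $C$-separated, so stripping the inverse-edge tail one letter at a time is exactly an application of the generating relation~\eqref{eq:EXP1}; chaining these gives $g^\downarrow\sim(g_0)^\downarrow$. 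Since $\sim$ is a semilattice congruence, it is compatible with meets, and therefore $I\sim\bigwedge_{g\in\max(I)}(g_0)^\downarrow=I_0$.

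The heart of the argument is the converse invariance: $I\sim J\Rightarrow I_0=J_0$. For this I would introduce the relation $I\approx J\Leftrightarrow I_0=J_0$ (with $0\approx 0$) and show it is a semilattice congruence containing the generators of $\sim$; minimality of $\sim$ then yields $\sim\,\subseteq\,\approx$, and combined with $I\sim I_0$ this gives $I\sim J\Leftrightarrow I_0=J_0$. That the generators lie in $\approx$ is immediate, since $(gx^{-1})_0=g_0$ forces $((gx^{-1})^\downarrow)_0=(g_0)^\downarrow=(g^\downarrow)_0$. The congruence property reduces to $(I\wedge J)_0=I_0\wedge J_0$, where $(I\cup J)_0=I_0\cup J_0$ is formal (the lower set of a union is the union of lower sets). \textbf{The main obstacle is the compatibility bookkeeping}: I must show that $I\cup J$ is $C$-compatible iff $I_0\cup J_0$ is. The key point — and the reason the whole scheme works — is that $C$-incompatibility of two paths is witnessed by a divergence into distinct edges $x\ne y$ of a common block $X\in C$, and these are \emph{forward} edges ($X\subseteq E^1$); such a divergence therefore occurs within the forward parts, so two paths are $C$-compatible iff their forward parts are. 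Since $I_0\subseteq I$ and $J_0\subseteq J$, this upgrades to the equivalence of $C$-compatibility for $I\cup J$ and for $I_0\cup J_0$, handling both the compatible case (where $(I\wedge J)_0=I_0\cup J_0$) and the incompatible-or-distinct-vertex case (where both sides are $0$).

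With $I\sim J\Leftrightarrow I_0=J_0$ in hand, the remaining claims are bookkeeping. The order statement follows because meets descend to $\YY$, so $[I]\le[J]\Leftrightarrow (I\wedge J)_0=I_0$; in the compatible case $(I\wedge J)_0=I_0\cup J_0$, giving $J_0\subseteq I_0$, while in the incompatible case one checks $J_0\not\subseteq I_0$ directly. For the maximality statements I would use that every representative $K\sim I$ satisfies $I_0=K_0\subseteq K$, i.e. $K\le I_0$ in $\Y$, so $I_0$ is the maximum representative of $[I]$. Finally, for $g\in I$ set $I_g:=I_0\cup g^\downarrow=I_0\wedge g^\downarrow$ (a valid meet, being $\subseteq I$ and hence $C$-compatible); then $[I_g]=[I_0]\wedge[g^\downarrow]=[I]$, because $g^\downarrow\subseteq I$ forces $[I]\le[g^\downarrow]$, and clearly $g\in I_g$. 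Any representative $K\sim I$ containing $g$ satisfies $I_0=K_0\subseteq K$ and $g^\downarrow\subseteq K$, whence $I_g\subseteq K$, i.e. $K\le I_g$, so $I_g$ is the maximum among representatives containing $g$.
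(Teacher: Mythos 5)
Your proof is correct, and its skeleton matches the paper's: both establish $I_0\subseteq I$, prove $I\sim I_0$ by stripping inverse-edge tails one letter at a time via the generating relation~\eqref{eq:EXP1} and meet-compatibility, and then show that $I\mapsto I_0$ is a complete invariant for $\sim$. The genuine difference is in how the invariance is established. The paper verifies it only on elementary expansion steps: if $g\in I$ and $gx^{-1}$ is reduced, then $(I\cup(gx^{-1})^{\downarrow})_0=I_0$, because $(gx^{-1})_0=g_0$, so adjoining $gx^{-1}$ creates no new forward parts; since the congruence is generated by such steps, this suffices. You instead prove the stronger global statement that $I\mapsto I_0$ is a semilattice endomorphism of $\Y$, i.e.\ $(I\wedge J)_0=I_0\wedge J_0$, and take $\approx$ to be its kernel. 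That forces the compatibility bookkeeping you flag as the main obstacle --- $I\cup J$ is $C$-compatible if and only if $I_0\cup J_0$ is --- which you correctly reduce to the fact (Remark~\ref{rem:onCcompatibility}) that $C$-incompatibility is witnessed by divergence into two \emph{distinct} edges of a block $X\in C$, and these lie in $E^1$, so the divergence is already visible in the forward parts. What your route buys: the homomorphism property makes the order statement fall out at once ($[I]\le[J]\iff I_0\cup J_0=I_0\iff J_0\subseteq I_0$) and exhibits the section $[I]\mapsto I_0$ as a semilattice embedding of $\YY$ into $\Y$, something the paper only records after the lemma. What the paper's route buys: a much lighter verification (single-element adjunction, no two-set compatibility argument). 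Incidentally, your treatment of the last assertion is more complete than the paper's: for $I_g=I_0\cup g^{\downarrow}$ the paper only checks $I_g\subseteq I$, whereas you also verify $[I_g]=[I]$ and the maximality of $I_g$ among representatives containing $g$ (via $K\sim I$, $g\in K$ $\Rightarrow$ $I_0\cup g^{\downarrow}\subseteq K$), which is exactly the justification left implicit there.
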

	\begin{proof}
First observe that $I\le I_0$, i.e., that $I_0 \sbe I$, is clear as $I$ is a lower set. We now check that $I_0\sim I$. 
	Indeed observe that $g^\downarrow\sim g_0^\downarrow$ for each $g\in I$, so that setting $A:=\max\{g_0:g\in I\}$, we get
	$$I= \bigcup_{g\in I} g^\downarrow \sim \bigcup_{g\in I} g_0^\downarrow =\bigcup_{h\in A} h^\downarrow = I_0.$$
	Hence $I\sim I_0$. To show that $I_0$ is the maximum representative of $[I]$, it is enough to show that if $I\in \Y$, $g\in I$, and $x\in E^{1}$ is such that $gx^{-1}$ is reduced as it stands, 
 then $(I\cup (gx^{-1})^{\downarrow})_0 = I_0$. This is clear if $gx^{-1}\in I$. If $gx^{-1}\notin I$, then 
 $I\cup (gx^{-1})^{\downarrow} = I \sqcup \{gx^{-1}\}$. If $g=g_0w$ if the decomposition of $g$ as in Notation~\ref{not:prefix-dead-ends}, then $gx^{-1} = g_0(wx^{-1})$ is the corresponding decomposition of $gx^{-1}$, so that 
 $\{h_0 :h\in I\}= \{h_0: h\in I\cup (gx^{-1})^{\downarrow}\}$. Therefore $I_0$ is the maximum representative of the class $[I]$, and in particular this shows that $[I]=[J]$ if and only if $I_0=J_0$. 
 
 We now check that $J_0\subseteq I_0$ if and only if $[I]\le [J]$. If $J_0\subseteq I_0$ then $[I]=[I_0]\le [J_0] = [J]$. Conversely if $[I]\le [J]$ then $[I] = [I]\wedge [J] = [I\wedge J]$. In particular observe that $I\cup J$ must be $C$-compatible, because $I\ne 0$ by hypothesis, and $[I] = [I\cup J]$. By our previous result, this implies that $I_0=(I\cup J)_0$, hence
 $J_0\subseteq (I\cup J)_0  = I_0$ and we get that $J_0\subseteq I_0$, as desired. 
 
Finally, let $g\in I$ for some $I\in \Y$. Then $I_0\subseteq I$ by the first part of the proof, and thus 
$$I_g= I_0\cup g^\downarrow \subseteq I,\mbox{ that is, } I\leq I_g.$$ 	
This shows the last assertion of the lemma.    
	\end{proof}

The above result shows that the quotient homomorphism $\pi\colon \Y\onto \YY$, $I\mapsto [I]$, admits a right inverse homomorphism $\sigma\colon \YY\to \Y$ given by $\sigma([I]):=I_0$. Therefore we may also view the quotient semilattice $\YY$ as a subsemilattice of $\Y$, and in particular, elements of $\YY^{\times}$ may be viewed as certain non-empty finite lower subsets of $\FC$. Or, using Proposition~\ref{prop:bijection-max-lower-sets}, we may also view elements of $\YY^{\times}$ as certain incomparable non-empty finite subsets of $\FC$.

We introduce below a useful notation for the image $\sigma (\YY)$ of the above map $\sigma$.

\begin{notation}
	\label{notati:Y-sub-0}
	Let $\Y _0(v)$ be the set of those $I\in \Y(v)$ such that each element in $\max (I)$ does not end in $E^{-1}$ (so that $I_0=I$). Set $\Y_0 = \bigcup_{v\in E^0} \Y _0(v)$.	
\end{notation}

\section{Munn trees}
\label{sect:Munntrees}

Recall that an {\it inverse category} is a category $C$ such that for each arrow $\gamma$ there is a unique arrow $\gamma^{-1}$ such that $\gamma = \gamma \gamma^{-1}\gamma$ and $\gamma^{-1} = \gamma^{-1} \gamma \gamma^{-1}$.

For each graph $E$, there is a free inverse category $\FIC(E)$. It is the quotient of the free category over $\hat{E}$ by the congruence generated by the relations $\gamma \sim \gamma \gamma ^{-1}\gamma$ and $(\gamma \gamma^{-1}) (\nu\nu^{-1}) \sim (\nu \nu^{-1})(\gamma \gamma^{-1})$ whenever $s(\gamma) = s(\nu)$, see \cite{marg-meakin-93}*{Section~4}.

Given an inverse category $C$ we can form an inverse semigroup $\IS(C)$ by adjoining a zero to $C$ and declaring any undefined product to be $0$.  We denote by $\FIS(E)$ the inverse semigroup $\IS(\FIC(E))$ associated to the free inverse category $\FIC(E)$. We call $\FIS(E)$ the {\it free inverse semigroup} of $E$.

We now relate the elements in $\FIC(E)$ with certain Munn trees. The reader is referred to \cite{Munn} and \cite{lawson}*{Section~6.4} for the basic theory of Munn trees and their connections with the free inverse monoid $\FIM (X)$ on a set $X$. Let $E$ be a directed graph and let $\FG(E)$ be the fundamental groupoid of $E$. Observe that whenever $E$ is the rose with $|E^1|$ petals, this agrees with the free group $ \mathbb F (E^1)$. Now let $\Gamma_E$ be the Cayley graph of the groupoid $\FG(E)$. We have
	$\Gamma_E ^0 =  \FG(E)$, and there is an edge $(g,e)$ from $g$ to $ge$ whenever $g, ge\in \FG(E)$, $e\in E^1$.
	
	Note that $\Gamma_E$ is a forest, that is, a disjoint union of trees. Each connected component $K$ of $\Gamma_E$ contains a unique vertex $v$, and $K$ consists of all the reduced paths $p$ such that $s(p) = v$. We will denote by $K_v$ the connected component of $v$. Observe that for each $v\in E^0$, the connected component $K_v$ is isomorphic to a certain subtree of the Cayley graph $\Gamma$ of the free group $\F$ on $E^1$, namely the subtree consisting in replacing the vertex $v$ by $1\in \F$, and identiying each other vertex of $K_v$ with the corresponding vertex in $\Gamma$.  	
	
	There is a natural partial action by left translations of $\FG(E)$ on the graph $\Gamma_E$.

Let $(E,C)$ be a separated graph. Observe that, for $v\in E^0$, elements in $\Y (v)$ can be interpreted as certain finite subtrees of $K_v$ containing the root $v$ of the connected component $K_v$. Concretely, the finite subtrees $T$ which belong to $\Y(v)$ are characterized by the property that the geodesic path between any two vertices of $T$ is a $C$-separated path (see Remark \ref{rem:onCcompatibility}).

\begin{definition}
	\label{def:Mun-EC-tree} Let $E$ be a directed graph.
	\begin{enumerate}[(a)]
		\item 	A {\it Munn $E$-tree} is a pair $(T,g)$, where $T$ is a finite connected subgraph (hence a subtree) of $\Gamma_E$ containing the unit $v$ of the connected component $C_v$ corresponding to $T$, and $g$ is a vertex of $T$.
      \item We define a {\it Toeplitz-Munn $E$-tree} as a pair $(T,g)$, where $T$ is a finite 
       connected subgraph of $\Gamma_E$ containing the corresponding unit $v$ and such that all maximal elements of $T$ do not end in $E^{-1}$, and  $g= g_0 w$ satisfies that $g_0\in T$ (see Notation \ref{not:prefix-dead-ends}).  
       \item Suppose now that $(E,C)$ is a separated graph. A {\it Munn $(E,C)$-tree} is a pair $(T,g)$, where $T\in \Y_0(v)$ (see Notation \ref{notati:Y-sub-0}) and $g= g_0 w\in \FC (v)$ satisfies that $g_0\in T$, for some $v\in E^0$ 
	\end{enumerate} 
  	\end{definition}

We now obtain a concrete description of the free inverse semigroup $\FIS(E)$ using Munn $E$-trees.

\begin{proposition}
	\label{prop:MunnE-trees}
	Let $E$ be a directed graph. Then the free inverse semigroup $\FIS(E)$ associated to $E$ is isomorphic to the semigroup $\mathcal M$ of all Munn $E$-trees, together with $0$, endowed with the product 
	$$(T_1,g_1)\cdot (T_2,g_2) = \begin{cases} (T_1\cup g_1\cdot T_2,\,  g_1\cdot g_2) & \text{ if } r(g_1) = s(g_2)\\ \qquad \,\,  \qquad \,\, 0 & \text{ if } r(g_1) \ne s(g_2) 
	\end{cases}  . $$
	The inverse of a Munn $E$-tree $(T,g)$ is given by
	$$(T,g)^{-1} = (g^{-1}\cdot T, g^{-1}).$$ 
\end{proposition}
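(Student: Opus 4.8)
The plan is to establish this as an isomorphism of inverse semigroups by identifying both the free inverse semigroup $\FIS(E)$ and the Munn-tree semigroup $\mathcal{M}$ with a common description, or by constructing explicit mutually inverse homomorphisms. Since $\FIS(E) = \IS(\FIC(E))$ is defined by a universal property (it is the inverse semigroup freely generated by the arrows of $\hat E$ subject only to the inverse-category and commutation-of-idempotents relations), the most efficient route is to exhibit a semigroup structure on $\mathcal{M}$, verify it is an inverse semigroup, and then use universality in both directions.

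First I would verify that the proposed product on $\mathcal{M}$ is well defined and associative, and that $\mathcal{M}$ is genuinely an inverse semigroup with the stated inverse. The key associativity check amounts to the identity $T_1 \cup g_1\cdot(T_2 \cup g_2\cdot T_3) = (T_1 \cup g_1\cdot T_2)\cup (g_1 g_2)\cdot T_3$ on subtrees, together with compatibility of the root-vertex bookkeeping $g_1\cdot(g_2\cdot g_3)=(g_1 g_2)\cdot g_3$ in $\FG(E)$; both reduce to the partial-action property of left translation of $\FG(E)$ on $\Gamma_E$ noted just before the statement. One must also check that $g_1\cdot T_2$ is again a finite connected subtree containing the appropriate unit and that the union $T_1\cup g_1\cdot T_2$ is connected (it contains the common vertex $g_1$), so the product lands in $\mathcal{M}$. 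For the inverse, I would compute $(T,g)(g^{-1}\cdot T,g^{-1})=(T\cup g\cdot g^{-1}\cdot T,\, gg^{-1})=(T, s(g))$, which is idempotent, and confirm $(T,g)(T,g)^{-1}(T,g)=(T,g)$ and the symmetric relation; the idempotents of $\mathcal M$ are precisely the pairs $(T,v)$ with $v$ the root, and these commute because the underlying operation on subtrees is union, which is commutative. This already shows $\mathcal{M}$ is an inverse semigroup.

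Next I would set up the isomorphism. Define a map on generators sending each edge $e\in\hat E^1$ to the Munn $E$-tree $(\{s(e),r(e)\}, e)$ (the single-edge tree with root $s(e)$, marked vertex $r(e)$) and each vertex $v$ to $(\{v\},v)$. Using the universal property of $\FIS(E)$, this extends to a homomorphism $\Phi\colon\FIS(E)\to\mathcal M$, provided the defining relations of the free inverse category hold in $\mathcal{M}$ — which they do, since I have already checked $\mathcal M$ is an inverse semigroup and the relations $\gamma\sim\gamma\gamma^{-1}\gamma$ and commutation of the idempotents $\gamma\gamma^{-1}$ are automatic in any inverse semigroup. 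For surjectivity, I would observe that an arbitrary Munn $E$-tree $(T,g)$ is generated: choosing a spanning enumeration of the edges of $T$ as reduced paths $\gamma_1,\dots,\gamma_n$ from the root (so that $T=\bigcup_i \gamma_i^\downarrow$ as subtrees), the product $\Phi(\gamma_1\gamma_1^{-1})\cdots\Phi(\gamma_n\gamma_n^{-1})\Phi(g)$ evaluates to $(T,g)$ by the product formula. For injectivity, the cleanest argument is to construct the inverse homomorphism directly: send a reduced path $\gamma$ to $(\gamma^\downarrow,\gamma)$ and check this respects the free-inverse-category relations, giving $\Psi\colon\FIS(E)\to\mathcal M$ agreeing with $\Phi$ on generators and hence $\Phi$ itself; injectivity then follows because $\Phi$ separates the generators and the Munn-tree representation is already a normal form.

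The main obstacle I anticipate is injectivity, i.e.\ showing that distinct elements of $\FIS(E)$ are not collapsed by $\Phi$ — equivalently, that the Munn $E$-tree $(T,g)$ is a faithful invariant. The honest way to handle this is the classical Munn argument adapted to the groupoid setting: one shows that every element of $\FIS(E)$ can be written in a Scheiblich-type normal form $(\gamma_1\gamma_1^{-1})\cdots(\gamma_n\gamma_n^{-1})\lambda$ (this is the free analogue of Lemma~\ref{lem:Scheiblich-form}, with no $C$-separation constraints), that $\Phi$ sends this form to the pair consisting of the subtree spanned by $\gamma_1,\dots,\gamma_n,\lambda$ and the marked vertex $\lambda$, and that two normal forms yield the same tree-and-vertex pair only if they are equal. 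This is exactly the content of Munn's theorem for the free inverse monoid as in \cite{lawson}*{Section~6.4}, transported from the rose to a general graph $E$ via the identification $K_v\cong$ subtree of the Cayley graph of $\F(E^1)$ recorded above; I would cite that theory and indicate that the argument goes through verbatim for $\FG(E)$ in place of $\F(E^1)$, since the only structural input is that $\Gamma_E$ is a forest.
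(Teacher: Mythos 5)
Your proposal is correct in outline, but it takes a genuinely different route from the paper. The paper's proof runs in the opposite direction: it first obtains the \emph{bijection} between $\FIS(E)$ and the set of Munn $E$\nb-trees by assigning to each path $\gamma$ its traversal tree $M(\gamma)$ and invoking \cite{marg-meakin-93}*{Theorem~4.1}, which states that two paths represent the same element of $\FIS(E)$ if and only if they have the same Munn $E$\nb-tree; the inverse semigroup structure on $\mathcal M$ is then obtained by \emph{transport} along this bijection, so associativity and the inverse axioms come for free, and the only remaining work is a direct computation with the elements $(\prod_{\lambda\in \max (T)} \lambda\lambda^{-1})g$ showing that the transported product is given by the stated formula $(T_1\cup g_1\cdot T_2,\, g_1\cdot g_2)$. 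You instead verify by hand that $\mathcal M$ is an inverse semigroup (associativity via the partial action of $\FG(E)$ on $\Gamma_E$, connectivity of unions, inverses, commuting idempotents), obtain $\Phi\colon \FIS(E)\to\mathcal M$ from the universal property so that the homomorphism property is automatic, prove surjectivity by generation, and reduce injectivity to (i) existence of a Scheiblich-type normal form in $\FIS(E)$ and (ii) the computation that $\Phi$ applied to a normal form returns the spanned tree with marked vertex $\lambda$, from which the normal-form expression can be read back. This is the standard ``model construction'' proof of a normal form theorem and it is sound; what it buys is independence from the Margolis--Meakin black box (the invariance of the tree under the defining relations is absorbed into the well-definedness of $\Phi$), at the cost of checking the semigroup axioms on $\mathcal M$ directly and supplying the free analogue of Lemma \ref{lem:Scheiblich-form}, which indeed exists by the same algorithm with no $C$-separation constraints.

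Two caveats. First, the step in your third paragraph proposing to ``construct the inverse homomorphism directly'' is circular as written: the map $\Psi$ you describe, $\gamma\mapsto(\gamma^\downarrow,\gamma)$, goes in the same direction as $\Phi$ rather than from $\mathcal M$ back to $\FIS(E)$, and the claim that injectivity follows ``because the Munn-tree representation is already a normal form'' assumes exactly what is to be proved. Your final paragraph correctly identifies and repairs this, so the gap is not fatal, but in a written proof that middle passage should be deleted. Second, a small slip: the image of a generator $e$ should be the tree $\{s(e),e\}$ inside $\Gamma_E$ (the root together with the reduced path $e$), not $\{s(e),r(e)\}$, since the vertices of $\Gamma_E$ are elements of $\FG(E)$, not of $E^0$.
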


\begin{proof}
Let $\gamma$ be a path in $\FIS(E)$. The tree $M(\gamma)$ associated to $\gamma$ is built in the same way as the tree associated to an element of the free inverse semigroup on a set $X$. Namely, starting with the vertex $v= s(\gamma)$, $M(\gamma)$ is the finite subtree of $C_v$ obtained by traversing the path $\gamma$ in $C_v$. The Munn $E$-tree $(T,g)$ of $\gamma $ is the pair consisting of $T=M(\gamma)$ and the final vertex $g\in C_v$ that we obtain once we have transversed $\gamma$. Note that $M(\gamma)$ is isomorphic to the tree denoted by $MT(\gamma)$ in \cite{marg-meakin-93}*{Section~4}. 
 Therefore, by \cite{marg-meakin-93}*{Theorem~4.1}, the paths $\gamma$ and $\nu$ represent the same element in $\FIS(E)$ if and only if they have the same Munn $E$-tree. 

Hence we get a bijection $(T,g)\mapsto (\prod_{\lambda\in \max (T)} \lambda\lambda^{-1})g$ between the set $\mathcal M $ of all Munn $E$-trees, together with $0$, and the inverse semigroup $\FIS(E)$. Consequently, we get an inverse semigroup structure in $\mathcal M$. We now check that the formula of the product in $\mathcal M$ is the one given in the statement. Let $(T_1,g_1)$ and $(T_2,g_2)$ be two Munn $E$-trees. We have to compute the element
$$A:= (\prod_{\lambda\in \max (T_1)} \lambda \lambda^{-1})g_1 (\prod_{\mu\in \max (T_2)} \mu \mu^{-1})g_2$$
in $\FIS(E)$. 
Note that $A=  (\prod_{\lambda\in \max (T_1)} \lambda \lambda^{-1}) (\prod_{\mu\in \max (T_2)} g_1\mu \mu^{-1}g_1^{-1})g_1g_2$, so that $A=0$ if $r(g_1)\ne s(g_2)$. Hence we may assume that $r(g_1)= s(g_2)$. Let $\mu$ be any path such that $r(g_1)= s(\mu)$. Write $g_1= g_1'\rho$, $\mu = \rho^{-1}\mu'$, where $g_1'\mu'$ is a reduced path as it stands. We then have
$$g_1\mu = g_1'\rho \rho^{-1}\mu' = g_1'\rho \rho^{-1}(g_1')^{-1}g_1'\mu'=
(g_1g_1^{-1})(g_1\cdot \mu)$$
Using this computation, we get 
\begin{align*}
	A & = (\prod_{\lambda\in \max (T_1)} \lambda \lambda^{-1}) (\prod_{\mu\in \max (T_2)} g_1\mu \mu^{-1}g_1^{-1})g_1g_2 \\
	& = (\prod_{\lambda\in \max (T_1)} \lambda \lambda^{-1})(g_1g_1^{-1}) (\prod_{\mu\in \max (T_2)} (g_1\cdot \mu)(g_1\cdot \mu)^{-1} )(g_1\cdot g_2) \\
	& = (\prod_{\lambda\in \max (T_1\cup g_1\cdot T_2)} \lambda \lambda^{-1} )(g_1\cdot g_2)\, ,
\end{align*}
where we have used that $g_1\in T_1$ for the last equality. This shows the desired formula for the product of Munn $E$-trees. The formula for the inverse is easy to check.
\end{proof}

We can now easily show that $\FIS(E)$ is strongly $E^*$-unitary and hence a restricted semidirect product.

\begin{corollary}
	\label{cor:FSIE-strongly-unitary}
	Let $E$ be a directed graph. Then $\FIS(E)$ is strongly $E^*$-unitary and thus it is a restricted semidirect product 
	$$\FIS(E) =  \E (\FIS(E)) \rtimes_\theta ^r \F,$$
	where $\F = \F (E^1)$ is the free group on $E^1$.  
\end{corollary}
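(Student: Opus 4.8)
The plan is to exhibit an explicit idempotent pure partial homomorphism $\pi\colon \FIS(E)^\times \to \F$ and then appeal to Proposition~\ref{prop:KelLawson-result}, which converts such a map directly into the desired restricted semidirect product decomposition. Using the identification of $\FIS(E)$ with the semigroup $\M$ of Munn $E$-trees from Proposition~\ref{prop:MunnE-trees}, I would define $\pi$ on a nonzero element $(T,g)$ by $\pi(T,g) = \omega(g)$, where $\omega\colon \FG(E)=\FP(E)^\times \to \F$ is the canonical idempotent pure partial homomorphism of Definition~\ref{def:FP} sending a reduced path to the reduced word it represents in $\F=\F(E^1)$. In words, $\pi$ discards the underlying tree $T$ and records only the free-group element carried by the distinguished vertex $g$; this is the map that reads off the ``grading'' part of a Munn tree, in analogy with the $\lambda$ in the Scheiblich normal form.

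First I would check that $\pi$ is a partial homomorphism. If $(T_1,g_1)\cdot(T_2,g_2)\ne 0$, then by the product formula in Proposition~\ref{prop:MunnE-trees} we have $r(g_1)=s(g_2)$ and the product equals $(T_1\cup g_1\cdot T_2,\, g_1\cdot g_2)$, so $\pi$ of the product is $\omega(g_1\cdot g_2)$; since $g_1\cdot g_2$ is the reduced concatenation in $\FG(E)$ and $\omega$ is a partial homomorphism, this equals $\omega(g_1)\omega(g_2)=\pi(T_1,g_1)\pi(T_2,g_2)$, as required. Next I would verify idempotent purity. The idempotents of $\M$ are exactly the pairs $(T,v)$ whose distinguished vertex is the root $v$ of $T$: indeed $(T,g)^2=(T,g)$ forces $g\cdot g=g$ in the groupoid $\FG(E)$, and only identities (vertices) are idempotent there. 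For such an element $\pi(T,v)=\omega(v)=1$, so $\E(\FIS(E))^\times\subseteq \pi^{-1}(1)$. Conversely, if $\pi(T,g)=\omega(g)=1$, then because $\omega$ is idempotent pure the reduced path $g$ must be a vertex, hence $g=v$ and $(T,g)$ is idempotent; thus $\pi^{-1}(1)=\E(\FIS(E))^\times$.

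With an idempotent pure partial homomorphism $\pi$ in hand, $\FIS(E)$ is by definition strongly $E^*$-unitary, and Proposition~\ref{prop:KelLawson-result} yields the isomorphism $\FIS(E)\cong \E(\FIS(E))\rtimes_\theta^r \F$ with the partial action $\theta$ described there. The only point requiring care is the reduction of idempotent purity of $\pi$ to that of $\omega$: the essential fact is that a reduced path representing the trivial element of the free group has length zero, i.e.\ is a vertex. Since this is precisely the content of $\omega$ being idempotent pure (already recorded in Definition~\ref{def:FP}), no separate combinatorial argument on Munn trees is needed, and I expect this step to be routine rather than a genuine obstacle.
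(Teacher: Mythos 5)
Your proposal is correct and follows essentially the same route as the paper: the paper also defines $\pi(T,g)=\omega(g)$ on the Munn $E$-tree picture of Proposition~\ref{prop:MunnE-trees} and invokes Proposition~\ref{prop:KelLawson-result}, merely asserting idempotent purity as clear where you verify it explicitly. The only content the paper adds beyond your argument is a description of the resulting ideals $D_g$ (trees containing $g$) and of $\theta_g$ as left translation, which is not needed for the statement itself.
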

 
\begin{proof}
	Using the picture of $\FIS(E)$ from Proposition \ref{prop:MunnE-trees}, it is clear that the map $\pi \colon \FIS(E)^\times \to  \F$ given by $\pi (T,g)= \omega (g)$, where $\omega \colon \FP(E)^\times \to \F$ is the canonical partial homomorphism (see Definition \ref{def:FP}) is an idempotent pure partial homomorphism. Hence the result follows from Proposition \ref{prop:KelLawson-result}. Note that for $g\in \F^*$, $D_g$ is the ideal of $\E (\FIS(E))$ consisting of those trees $T$ such that $g\in T$. The map $\theta_g\colon D_{g^{-1}}\to D_g$ sends a tree $T$ to its left translation $g\cdot T$ by $g$. 
	\end{proof}

From now on, we will identify the free inverse semigroup $\FIS(E)$ on $E$ with the inverse semigroup $\mathcal M $ of Munn $E$-trees, together with $0$. Let $\E$ be the semilattice of idempotents of $\mathcal M$. The nonzero elements of $\E$ are the finite subtrees of $\Gamma_E$ containing the root $v\in E^0$ corresponding to its connected component. This can be identified with the family of non-empty, finite lower subsets of $\FG(E)$. Let $\sim$ be the congruence on $\FIS(E)\cong \mathcal M$ generated by the relations $e^{-1}e \sim r(e)$ for $e\in E^1$. Following the proof of Lemma \ref{lem:maxelements}, we see that each equivalence class $[T]$, for $T\in \E^\times$, has a maximum representative $T_0$ in $\E$ (with respect to the natural order of $\E$), which is characterized by the property that all the maximal elements of $T_0$ do not end in $E^{-1}$. 

\begin{proposition}
	\label{prop:Toeplitz-Munn-trees}
	Let $\sim$ be the congruence on $\FIS(E)\cong \mathcal M$ generated by the relations $e^{-1}e \sim r(e)$ for $e\in E^1$. Then the inverse semigroup $\mathcal M/{\sim}$ is isomorphic to the inverse semigroup $\mathcal{TM}$ consisting of all the Toeplitz-Munn $E$-trees $(T,g)$, endowed with the product 
$$(T_1,g_1)\cdot (T_2,g_2) = \begin{cases} \Big( (T_1'\cup g_1\cdot T_2')_0, \,\, g_1\cdot g_2\Big) & \text{ if } r(g_1) = s(g_2)\\ \qquad \,\,  \qquad \,\, 0 & \text{ if } r(g_1) \ne s(g_2) 
\end{cases}   $$
where $T_1',T_2'\in  \E$ are representatives of $T_1$ and $T_2$ respectively, such that $g_1\in T_1'$ and $g_2\in T_2'$. The inverse is given by  	
	$(T,g)^{-1} = ((g^{-1}\cdot T')_0, g^{-1})$,
	where $T'$ is a representative of $T$ such that $g\in  T'$. 
	
	Moreover, the map $(T,g)\mapsto \omega (g)$ determines an idempotent pure partial homomorphism $\mathcal{TM} \to \F$. 
	\end{proposition}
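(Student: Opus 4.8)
The plan is to realize $\mathcal M/{\sim}$ as $\mathcal{TM}$ via the pruning map $\Phi\colon\mathcal M\to\mathcal{TM}$ defined by $\Phi(0)=0$ and $\Phi(T,g)=(T_0,g)$, where $T_0$ is the maximum representative of the $\sim$-class of the idempotent $T$ recalled just before the statement (so all maximal vertices of $T_0$ avoid $E^{-1}$). First I would check that $\Phi$ indeed takes values in $\mathcal{TM}$: since $g\in T$ and $T$ is a lower set, the prefix $g_0\le_p g$ lies in $T$, and because $g_0$ does not end in $E^{-1}$ it survives the pruning, so that $g_0\in T_0$ (this uses Notation~\ref{not:prefix-dead-ends} and Proposition~\ref{prop:bijection-max-lower-sets}); together with the characterisation of $T_0$ this gives exactly the requirements in Definition~\ref{def:Mun-EC-tree}(b).

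The crucial step is to show that $\Phi$ factors through $\sim$, i.e. that $(T,g)\sim(T',g')$ forces $g=g'$ and $T_0=T'_0$. For the marked vertex I would argue that $\sim$ preserves it: the congruence is generated by the pairs $(e^{-1}e,r(e))$, and both $e^{-1}e$ and $r(e)$ have the same reduced path, namely the vertex $r(e)$; since the marked vertex of a product is the reduced product of the marked vertices (Proposition~\ref{prop:MunnE-trees}), every elementary transition $p\,e^{-1}e\,q\leftrightarrow p\,r(e)\,q$ leaves the marked vertex unchanged. For the tree component I would pass to the domain idempotent: a direct computation gives $(T,g)(T,g)^{-1}=(T,v)$ with $v=s(g)$, so applying the congruence yields $(T,v)\sim(T',v)$, and the idempotent result recalled before the statement (the analogue of Lemma~\ref{lem:maxelements}) then gives $T_0=T'_0$. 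Hence $\Phi$ descends to a well-defined map $\ol\Phi\colon\mathcal M/{\sim}\to\mathcal{TM}$.

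Next I would prove that $\ol\Phi$ is a bijection. Injectivity follows by writing $(T,g)=(T,v)\,(g^\downarrow,g)$ (valid since $g\in T$ makes $g^\downarrow\subseteq T$): if $g=g'$ and $T_0=T'_0$, then $(T,v)\sim(T',v)$ gives $(T,g)\sim(T',v)(g^\downarrow,g)=(T',g')$. Surjectivity is witnessed by the assignment $(S,g)\mapsto[(S\cup g^\downarrow,g)]$, since $(S\cup g^\downarrow)_0=S$ whenever $S=S_0$ with $g_0\in S$. To obtain the product and inverse formulas I would simply transport the operations of the inverse semigroup $\mathcal M/{\sim}$ along $\ol\Phi$: choosing representatives $T_i'$ with $(T_i')_0=T_i$ and $g_i\in T_i'$ (e.g.\ $T_i'=T_i\cup g_i^\downarrow$), the product in $\mathcal M$ reads $(T_1',g_1)(T_2',g_2)=(T_1'\cup g_1\cdot T_2',\,g_1 g_2)$ when $r(g_1)=s(g_2)$, and applying $\Phi$ gives exactly $\big((T_1'\cup g_1\cdot T_2')_0,\,g_1 g_2\big)$; independence of the chosen representatives is automatic from the well-definedness of $\ol\Phi$. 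The inverse formula is obtained the same way from $(T',g)^{-1}=(g^{-1}\cdot T',g^{-1})$.

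Finally, the partial homomorphism $\mathcal{TM}\to\F$, $(T,g)\mapsto\omega(g)$, is the transport of the idempotent pure partial homomorphism $\pi(T,g)=\omega(g)$ of $\FIS(E)=\mathcal M$ from Corollary~\ref{cor:FSIE-strongly-unitary}; since $\sim$ preserves $g$, the map $\pi$ descends, and it remains idempotent pure because the idempotents of $\mathcal{TM}$ are exactly the pairs $(T,v)$ with $v\in E^0$, on which $\omega$ is trivial, while $\omega(g)=1$ forces $g$ to be a vertex (a reduced path maps to the identity of $\F$ only when it is trivial). I expect the main obstacle to be the well-definedness of the tree component — that $\sim$-equivalent Munn trees have equal prunings — which is why reducing it to the already-settled idempotent case through the domain idempotent $(T,g)(T,g)^{-1}=(T,v)$ is the key device.
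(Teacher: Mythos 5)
Your proof is correct, and it reaches the statement by a more hands-on route than the paper does. The paper's own argument is two sentences: it observes that the idempotent pure partial homomorphism $\FIS(E)^\times \to \F$ of Corollary \ref{cor:FSIE-strongly-unitary} factors through the quotient by $\sim$, and then invokes the general structure result Proposition \ref{prop:KelLawson-result} together with the maximum-representative fact recalled before the statement, so that each element of $\FIS(E)/{\sim}$ is represented by a unique Toeplitz--Munn $E$-tree and the product and inverse formulas come for free from the restricted semidirect product decomposition $\E(\FIS(E)/{\sim})\rtimes^r\F$. You never invoke Proposition \ref{prop:KelLawson-result}; instead you build the isomorphism explicitly via the pruning map $\Phi(T,g)=(T_0,g)$, prove well-definedness and bijectivity, and then transport the operations along $\ol\Phi$. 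Both proofs rest on the same combinatorial core, namely that every $\sim$-class of idempotents has the pruning $T_0$ as its maximum representative. What your route buys is that it makes explicit the two points the paper compresses into ``it is clear'': that the congruence preserves the marked vertex (both sides of $e^{-1}e\sim r(e)$ have the same $\FP(E)$-component, source and range, so elementary transitions preserve marked vertices), and that $\sim$-equivalent Munn trees have equal prunings, which you obtain by the efficient reduction $(T,g)(T,g)^{-1}=(T,s(g))$ to the already-settled idempotent case. What the paper's route buys is economy: once strong $E^*$-unitarity of the quotient and uniqueness of Toeplitz--Munn representatives are known, the semidirect product machinery produces the multiplication and inversion formulas without the representative-independence and transport checks you carry out by hand. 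One point you should state explicitly (it is implicit in your marked-vertex argument): no nonzero element becomes $\sim$-equivalent to $0$, since $xay=0$ iff $xby=0$ for each generating pair $(a,b)=(e^{-1}e,\,r(e))$, both sides having equal source and range; this is needed for $\ol\Phi$ to be well defined at the class of zero.
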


	\begin{proof}
		It is clear that the idempotent pure partial homomorphism $\FIS(E)^\times \to \F$ factors
		through an idempotent pure partial homomorphism $\FIS(E)^\times /{\sim} \to \F$. The result follows from Proposition \ref{prop:KelLawson-result}, together with the fact that each 
		 equivalence class $[T]$, for $T\in \E$ contains a maximum representative $T_0$, so that each element in $\FIS(E)^\times/\sim$ is represented by a unique Toeplitz-Munn $E$-tree.  
	\end{proof}
    
We can finally obtain our main result for the inverse semigroup $\IS(E,C)$.

\begin{theorem}
	\label{thm:ECMunntrees}
Let $(E,C)$ be a separated graph. Then the graph inverse semigroup $\IS (E,C)$ is isomorphic to the inverse semigroup consisting of all Munn $(E,C)$-trees, endowed with the following product
$$(T_1,g_1)\cdot (T_2,g_2) = \begin{cases} \Big( (T_1'\cup g_1\cdot T_2')_0, \,\, g_1\cdot g_2\Big) & \text{ if } T_1'\cup g_1\cdot T_2' \text{ is }C\text{-compatible} \\ \qquad \,\,  \qquad \,\, 0 & \text{ otherwise }  
\end{cases}   $$
where $T_1',T_2'\in \Y_0$ are representatives of $T_1$ and $T_2$ respectively, such that $g_1\in T_1'$ and $g_2\in T_2'$.
The inverse is given $(T,g)^{-1} = ((g^{-1}\cdot T')_0,g^{-1})$, where $T'$ is a representative of $T$ such that $g\in T'$. 
The map $(T,g)\mapsto \omega (g)$ provides an idempotent pure partial homomorphism $\IS(E,C)^\times \to \F$, which determines a semidirect product decomposition
$$\IS (E,C) \cong \YY \rtimes_{\theta}^r \F.$$
For each $g\in \FC(v)\setminus \{v\}$, $D_g$ can be identified with the set of all those $T\in \Y_0(v)$ such that $g_0\in T$, and the action $\theta_g$ is defined by $\theta_g(T)= (g\cdot T')_0$, where $T'$ is a representative of $T$ containing $g^{-1}$. 
\end{theorem}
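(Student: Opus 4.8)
The plan is to build on the analogy with the free inverse semigroup $\FIS(E)$, whose structure is already pinned down in Proposition~\ref{prop:MunnE-trees}, and then to factor out exactly the two extra pieces of data that distinguish $\IS(E,C)$: the relation $e^{-1}e\equiv r(e)$ (handled in Proposition~\ref{prop:Toeplitz-Munn-trees}) and the $C$-separation constraint. Concretely, I would first use the Scheiblich normal form from Proposition~\ref{prop:Scheiblich-form} to set up a candidate bijection: send a nonzero element $(\gamma_1\gamma_1^{-1})\cdots(\gamma_n\gamma_n^{-1})\lambda$ of $\IS(E,C)^\times$ to the pair $(T,g)$, where $T=\{\gamma_1,\dots,\gamma_n\}^{\downarrow}\in\Y_0(v)$ (after replacing each $\gamma_i$ by its $E^{-1}$-free prefix, as in Lemma~\ref{lem:maxelements} and Notation~\ref{not:prefix-dead-ends}) and $g=\lambda\in\FC(v)$. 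The uniqueness half of the Scheiblich normal form shows this map is well defined on equivalence classes, and its target is precisely the set of Munn $(E,C)$-trees of Definition~\ref{def:Mun-EC-tree}(c).

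Next I would verify that this assignment is a bijection and that it transports the multiplication of $\IS(E,C)$ to the stated formula. For the product formula, I would essentially repeat the calculation in the proof of Proposition~\ref{prop:MunnE-trees}: writing the two elements in the idempotent-times-path form $(\prod_{\lambda\in\max(T_1)}\lambda\lambda^{-1})g_1$ and $(\prod_{\mu\in\max(T_2)}\mu\mu^{-1})g_2$, one conjugates the second idempotent past $g_1$ using the identity $g_1\mu\equiv(g_1g_1^{-1})(g_1\cdot\mu)$, collects the union $T_1'\cup g_1\cdot T_2'$, and then applies the $(-)_0$ reduction of Lemma~\ref{lem:maxelements} to return to $\Y_0$. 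The only genuinely new feature compared with the free case is that the product is $0$ precisely when $T_1'\cup g_1\cdot T_2'$ fails to be $C$-compatible; this is exactly the content of the vanishing computation at the end of the proof of Lemma~\ref{lem:Scheiblich-form}, which shows that a $C$-incompatible pair of idempotents multiplies to $0$ in $\IS(E,C)$. The inverse formula is then a direct check using $(g^{-1}\cdot T')_0$.

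Having identified $\IS(E,C)$ with the Munn $(E,C)$-tree model, the semidirect product decomposition follows formally. The map $(T,g)\mapsto\omega(g)$ is multiplicative whenever the product is nonzero, since $\omega$ is a partial homomorphism on $\FG(E)$ (Definition~\ref{def:FP}) and $\omega(g_1\cdot g_2)=\omega(g_1)\omega(g_2)$; and it is idempotent pure because $\omega(g)=1$ forces $g$ to be a vertex, i.e.\ $(T,g)$ to be an idempotent. Proposition~\ref{prop:KelLawson-result} then yields $\IS(E,C)\cong\YY\rtimes_\theta^r\F$, where $\YY\cong\E(\IS(E,C))$ via Lemma~\ref{lem:maxelements}, the ideal $D_g$ consists of those $T\in\Y_0(s(g))$ with $g_0\in T$, and $\theta_g(T)=(g\cdot T')_0$; one reads off these descriptions directly from the definition of $D_g$ and $\theta_g$ in Proposition~\ref{prop:KelLawson-result} together with the translation action of $\FG(E)$ on $\Gamma_E$.

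I expect the main obstacle to be the well-definedness and injectivity of the candidate bijection, i.e.\ establishing that the Scheiblich normal form is genuinely \emph{unique} and that distinct normal forms give distinct nonzero elements of $\IS(E,C)$. Proposition~\ref{prop:Scheiblich-form} only produces \emph{a} normal form; to get a bijection one must rule out hidden collapses in $\IS(E,C)$, and the natural route is to exhibit a faithful action on $\Y$ (equivalently, on the Munn $(E,C)$-trees) realizing the generators $E^0\cup\hat E^1$ subject to relations (1)--(5), so that the universal property of $\IS(E,C)$ gives a surjection onto the Munn model, while the explicit normal form gives a section. Once faithfulness of this representation is secured, the word problem is solved and the remaining verifications are the routine computations sketched above.
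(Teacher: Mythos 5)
Your proposal is sound, but it runs on a genuinely different engine than the paper's proof. The paper never builds the bijection out of the Scheiblich normal form at all: it observes that, by comparing presentations, $\IS(E,C)$ is the \emph{Rees quotient} of the Toeplitz--Munn semigroup $\FIS(E)/{\sim}$ of Proposition~\ref{prop:Toeplitz-Munn-trees} by the ideal generated by the elements $e^{-1}f$ (distinct $e,f$ in the same $X\in C$), and that a Rees quotient of a strongly $E^*$-unitary inverse semigroup is again strongly $E^*$-unitary, simply by restricting the grading to $S\setminus I$. Since a Rees quotient collapses nothing outside the ideal, the word problem --- precisely the issue you single out as the main obstacle --- is inherited for free from the Toeplitz--Munn model, whose faithfulness was already imported from Margolis--Meakin in Proposition~\ref{prop:MunnE-trees}; Proposition~\ref{prop:KelLawson-result} then delivers the semidirect product, with $D_g$ and $\theta_g$ exactly as you describe. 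Your route instead re-proves the word problem from scratch, in classical Munn--Scheiblich style: an action of the generators on Munn $(E,C)$-trees by partial bijections (only relations (1)--(4) need checking, since (5) is automatic in any inverse semigroup and $\IS(E,C)$ is universal as an \emph{inverse} semigroup on (1)--(4)), the induced homomorphism $\phi$ into the symmetric inverse monoid, and then bijectivity of $\phi$ onto the Munn model from the identity $\phi\circ\sigma=\mathrm{id}$ together with surjectivity of $\sigma$, which is the existence half of the normal form (Proposition~\ref{prop:Scheiblich-form}). That is more work than the paper's argument, but it buys self-containedness: it avoids the appeal to Margolis--Meakin, and it makes explicit a point the paper leaves implicit, namely that the nonzero elements surviving the Rees quotient are exactly the $C$-compatible trees (equivalently, that the ideal generated by the $e^{-1}f$'s consists of the non-$C$-compatible Toeplitz--Munn trees); your use of the vanishing computation from Lemma~\ref{lem:Scheiblich-form} supplies one half of that identification, and your faithfulness argument the other. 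Finally, you were right to flag, and then repair, the potential circularity: in the paper, uniqueness of the Scheiblich normal form (Theorem~\ref{thm:uniqueness-SNF}) is a \emph{corollary} of this theorem, so only Proposition~\ref{prop:Scheiblich-form} may be used as input, which is exactly what your last paragraph arranges.
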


\begin{proof}
	If $S$ is any strongly $E^*$-unitary inverse semigroup with an idempotent pure partial homomorphism $\theta \colon S^\times \to \Group$, and $I$ is an ideal of $S$, then $S/I$ is also strongly $E^*$-unitary with the same group $\Group$.
	This is clear since $(S/I)^\times$ can be identified with $S\setminus I$, and thus the restriction of $\theta $ to $S\setminus I$ gives the desired idempotent pure partial homomorphism. 

Now let $(E,C)$ be a separated graph. Since the inverse semigroup $\IS (E,C)$ is the Rees quotient of the inverse semigroup $\FIS(E)/{\sim}$ by the ideal generated by all the elements of the form $e^{-1}f$, for distinct $e,f\in X$, $X\in C$, it follows from Proposition \ref{prop:Toeplitz-Munn-trees} and the above observation that $S(E,C)$ is a strongly $E^*$-unitary semigroup, with an idempotent pure partial homomorphism $\pi \colon \IS (E,C)^\times \to \F$. The result now follows from Propositions \ref{prop:KelLawson-result}  and \ref{prop:Toeplitz-Munn-trees}.  
\end{proof}

The uniqueness of the Scheiblich normal form follows immediately from the partial semidirect product structure, as follows. 

\begin{theorem}
	\label{thm:uniqueness-SNF} Let $(E,C)$ be a separated graph and let $\gamma $ be a nonzero element of $\IS(E,C)$. Then 
\begin{equation}
	\label{eq:SNF}
	\gamma = (\gamma_1\gamma_1^{-1})\cdots (\gamma_n \gamma _n^{-1})\lambda,
	\end{equation}
		 where $\{\gamma_1,\dots ,\gamma_n\}$ is an incomparable $C$-compatible family of $C$-separated paths, $\lambda \in \FC$, the words $\gamma_j$ do not end in $E^{-1}$, $s(\gamma_j)= s(\gamma)= s(\lambda)$ for $j=1,\dots ,n$,  and $\gamma_i\gamma_i^{-1} = (\lambda \lambda^{-1}) (\gamma_i\gamma_i^{-1})$ for some $i$. The latter condition is equivalent to the fact that  $\lambda_0$ is a prefix of $\gamma_i$ for some $i$, where $\lambda = \lambda_0w$, $\lambda_0$ does not end in $E^{-1}$, and $w$ is a product of elements in $E^{-1}$. Moreover the Scheiblich normal form is unique in the following sense: if $\gamma = (\gamma_1'(\gamma_1')^{-1})\cdots (\gamma_m' (\gamma _m')^{-1})\lambda'$ is another Scheiblich normal form for $\gamma$, then $\lambda= \lambda'$ and $\{\gamma_1,\dots ,\gamma_n\}= \{ \gamma_1',\dots , \gamma_m'\}$. In particular, $n=m$.
		 
		 Conversely, elements $\gamma $ in Scheiblich normal form are nonzero in $\IS(E,C)$. 
\end{theorem}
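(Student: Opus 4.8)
The plan is to derive Theorem~\ref{thm:uniqueness-SNF} directly from the semidirect product decomposition $\IS(E,C)\cong\YY\rtimes_\theta^r\F$ established in Theorem~\ref{thm:ECMunntrees}, rather than re-deriving everything from scratch. First I would recall that existence of a Scheiblich normal form with the stated properties (incomparable $C$-compatible family, $\gamma_j$ not ending in $E^{-1}$, $s(\gamma_j)=s(\gamma)=s(\lambda)$, and $\gamma_i\gamma_i^{-1}=(\lambda\lambda^{-1})(\gamma_i\gamma_i^{-1})$ for some $i$) has already been supplied by Proposition~\ref{prop:Scheiblich-form}; so the only genuinely new content is uniqueness and the nonvanishing of elements presented in this form. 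The strategy is to match the data $(\{\gamma_1,\dots,\gamma_n\},\lambda)$ of a normal form against the invariant $(T,g)\in\YY\rtimes_\theta^r\F$ of the corresponding element, and show this correspondence is bijective onto the nonzero elements.

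The key steps, in order, are as follows. First I would compute the Munn $(E,C)$-tree $(T,g)$ of an element written in the form~\eqref{eq:SNF}. Using Theorem~\ref{thm:ECMunntrees}, the product $(\gamma_1\gamma_1^{-1})\cdots(\gamma_n\gamma_n^{-1})\lambda$ corresponds to the pair whose tree component is $T=(\{\gamma_1,\dots,\gamma_n\}^{\downarrow}\cup\lambda_0^{\downarrow})_0\in\Y_0$ and whose group/path component is $g=\lambda$ (equivalently $\omega(\lambda)\in\F$ under the grading $\f$). Here the hypothesis that some $\gamma_i$ extends $\lambda_0$ guarantees $\lambda_0$ is already subsumed, so $T$ is exactly the tree whose set of maximal elements is $\{\gamma_1,\dots,\gamma_n\}$. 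This is the crucial identification: the incomparable $C$-compatible family $\{\gamma_1,\dots,\gamma_n\}$ is precisely $\max(T)$, and $\lambda$ is precisely the second coordinate $g$. Second, I would invoke the uniqueness built into the semidirect product: two elements of $\YY\rtimes_\theta^r\F$ are equal if and only if their tree components agree in $\YY$ (hence their canonical representatives in $\Y_0$ agree, by Lemma~\ref{lem:maxelements}) and their $\F$-components agree. Since $\max(T)$ is uniquely determined by $T\in\Y_0$ via Proposition~\ref{prop:bijection-max-lower-sets}, equality of the invariants forces $\{\gamma_1,\dots,\gamma_n\}=\{\gamma_1',\dots,\gamma_m'\}$ (so $n=m$) and $\lambda=\lambda'$, which is exactly the asserted uniqueness.

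For the converse—that every expression in the stated normal form is nonzero—I would argue that the constraints defining a normal form are exactly what is needed to produce a legitimate element of $\YY^\times\rtimes_\theta^r\F$. The $C$-compatibility of $\{\gamma_1,\dots,\gamma_n\}$ ensures that the lower set they generate is a genuine nonzero element $T$ of $\Y_0$ (not collapsed to $0$ in the semilattice $\YY$), and the condition that some $\gamma_i$ extends $\lambda_0$ guarantees $\lambda_0\in T$, so that $(T,\lambda)$ satisfies the defining condition $g_0\in T$ of a Munn $(E,C)$-tree and thus lies in $D_{\omega(\lambda)}\times\{\lambda\}$, i.e. it is a nonzero element of the restricted semidirect product. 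Tracing this nonzero pair back through the isomorphism of Theorem~\ref{thm:ECMunntrees} shows the original word represents a nonzero element of $\IS(E,C)$.

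I would also separately verify the stated reformulation of the condition ``$\gamma_i\gamma_i^{-1}=(\lambda\lambda^{-1})(\gamma_i\gamma_i^{-1})$ for some $i$'' as ``$\lambda_0\le_p\gamma_i$ for some $i$''; this is a short calculation using $\lambda\lambda^{-1}=\lambda_0\lambda_0^{-1}$ (since $w$ consists of inverse edges and cancels) together with the observation that $\lambda_0\lambda_0^{-1}\cdot\gamma_i\gamma_i^{-1}=\gamma_i\gamma_i^{-1}$ holds exactly when $\lambda_0$ is a prefix of $\gamma_i$, analogous to the prefix computation performed in the proof of Proposition~\ref{prop:Scheiblich-form}. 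The main obstacle I anticipate is the careful bookkeeping in Step~1: one must check that passing to the canonical representative $(\cdots)_0\in\Y_0$ does not disturb the set of maximal elements—precisely that $\max((\{\gamma_1,\dots,\gamma_n\}^\downarrow)_0)=\{\gamma_1,\dots,\gamma_n\}$—which relies on the $\gamma_j$ not ending in $E^{-1}$ so that $(\gamma_j)_0=\gamma_j$. Everything else is a transcription of the semidirect product bookkeeping into the language of paths.
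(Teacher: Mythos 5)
Your proposal is correct and follows the same route as the paper, which derives Theorem~\ref{thm:uniqueness-SNF} directly from the semidirect product decomposition of Theorem~\ref{thm:ECMunntrees} (the paper merely states that uniqueness ``follows immediately from the partial semidirect product structure,'' and your argument fills in exactly that translation). In particular, your identification of the normal-form data $(\{\gamma_1,\dots,\gamma_n\},\lambda)$ with the Munn $(E,C)$-tree $(T,g)$ via $\max(T)=\{\gamma_1,\dots,\gamma_n\}$ and $g=\lambda$, using Proposition~\ref{prop:Scheiblich-form} for existence and Lemma~\ref{lem:maxelements} together with Proposition~\ref{prop:bijection-max-lower-sets} for uniqueness and nonvanishing, is precisely the intended proof.
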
 

Notice that we can describe the natural order of $\IS(E,C)$ using the Scheiblich normal form: Given $\alpha=\alpha_1\alpha_1^{-1}\cdots\alpha_n\alpha_n^{-1}\lambda$ and $\beta=\beta_1\beta_1^{-1}\cdots\beta_m\beta_m^{-1}\lambda'$ in Scheiblich normal forms, then $\alpha\leq \beta$ in $\IS(E,C)$ if and only if $\lambda=\lambda'$ and $\{\alpha_1,\ldots,\alpha_n\}^{\downarrow} \supseteq 
\{\beta_1,\ldots,\beta_m\}$.

\vskip 0,5pc

We also highlight the following consequence of Theorem \ref{thm:ECMunntrees}.

\begin{theorem}
\label{thm:semidirect-product}
Let $(E,C)$ be a separated graph and let $\mathcal E$ be the semilattice of idempotents of $\IS(E,C)$. Then there exists a (zero-preserving) isomorphism $\YY \cong \mathcal E$ given by 
$$[I]\mapsto \prod_{\lambda\in \max (I_0)} \lambda \lambda^{-1},$$
where $I_0$ is the unique maximal element of $[I]$ in the semilattice order (i.e., reverse inclusion), which is characterized by the property that each maximal element of $I_0$ does not end in $E^{-1}$.
\end{theorem}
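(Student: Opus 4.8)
The plan is to deduce the statement as an essentially formal consequence of the semidirect product decomposition $\IS(E,C)\cong\YY\rtimes_\theta^r\F$ established in Theorem~\ref{thm:ECMunntrees}, combined with the general principle that the semilattice of idempotents of a restricted semidirect product $\YY\rtimes_\theta^r\F$ is canonically isomorphic to the base semilattice $\YY$. First I would identify the idempotents inside $\YY\rtimes_\theta^r\F$. Squaring a nonzero element gives $([I]\delta_g)^2=\theta_g(\theta_g^{-1}([I])[I])\delta_{g^2}$, so $[I]\delta_g$ is idempotent if and only if $g^2=g$, i.e. $g=1$ in the free group $\F$; and conversely $[I]\delta_1$ is visibly idempotent, with $([I]\delta_1)([J]\delta_1)=([I]\wedge[J])\delta_1$. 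Hence $\E(\YY\rtimes_\theta^r\F)=\{[I]\delta_1:[I]\in\YY\}\sqcup\{0\}$ and the map $[I]\mapsto[I]\delta_1$ is a zero-preserving semilattice isomorphism $\YY\congto\E(\YY\rtimes_\theta^r\F)$.

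Next I would transport this isomorphism across the identification of $\IS(E,C)$ with the inverse semigroup of Munn $(E,C)$-trees furnished by Theorem~\ref{thm:ECMunntrees} (which refines the bijection of Proposition~\ref{prop:MunnE-trees}). Under the grading $\pi\colon\IS(E,C)^\times\to\F$, $[I]\delta_1$ corresponds to an element of trivial $\F$-degree, that is, to a Munn $(E,C)$-tree $(T,g)$ with $\omega(g)=1$; since $g=g_0w\in\FC$ is a reduced $C$-separated path, $\omega(g)=1$ forces $g$ to be the root $v=s(I)$. Thus the relevant tree is $(I_0,v)$, where $I_0$ is the canonical maximum representative of $[I]$ provided by Lemma~\ref{lem:maxelements}, characterised by the property that no maximal element of $I_0$ ends in $E^{-1}$ (Notation~\ref{notati:Y-sub-0}). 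By the Munn-tree bijection, $(I_0,v)$ is sent to $\big(\prod_{\lambda\in\max(I_0)}\lambda\lambda^{-1}\big)v=\prod_{\lambda\in\max(I_0)}\lambda\lambda^{-1}$. Composing the two identifications yields exactly the asserted map $[I]\mapsto\prod_{\lambda\in\max(I_0)}\lambda\lambda^{-1}$, with $0\mapsto0$.

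Finally I would confirm that this composite is a semilattice isomorphism onto $\E=\E(\IS(E,C))$. Well-definedness and injectivity are immediate from the uniqueness of the Scheiblich normal form (Theorem~\ref{thm:uniqueness-SNF}): the product $\prod_{\lambda\in\max(I_0)}\lambda\lambda^{-1}$ is precisely the Scheiblich normal form of an idempotent (its $\lambda$-part being the vertex $v$), so distinct classes $[I]$ produce genuinely distinct idempotents. Surjectivity follows because every idempotent of $\IS(E,C)$ maps to $1$ under $\pi$ and therefore, by Proposition~\ref{prop:Scheiblich-form}, has a normal form $(\gamma_1\gamma_1^{-1})\cdots(\gamma_n\gamma_n^{-1})v$ with $\{\gamma_1,\dots,\gamma_n\}$ incomparable, $C$-compatible and not ending in $E^{-1}$; setting $I:=\{\gamma_1,\dots,\gamma_n\}^{\downarrow}$ gives $\max(I_0)=\{\gamma_1,\dots,\gamma_n\}$.

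I do not expect a serious obstacle here, since the theorem is largely a repackaging of Theorem~\ref{thm:ECMunntrees}: because $\IS(E,C)\cong\YY\rtimes_\theta^r\F$ is a semigroup isomorphism and $[I]\mapsto[I]\delta_1$ is a semilattice isomorphism onto the idempotents, meet-preservation of the composite is automatic. The one point requiring genuine care is matching the explicit formula, i.e.\ verifying that $\big(\prod_{\lambda\in\max(I_0)}\lambda\lambda^{-1}\big)\big(\prod_{\mu\in\max(J_0)}\mu\mu^{-1}\big)$ equals $\prod_{\nu\in\max((I\wedge J)_0)}\nu\nu^{-1}$ when $I\cup J$ is $C$-compatible and vanishes otherwise; this is exactly the product formula of Theorem~\ref{thm:ECMunntrees} specialised to trees of the form $(T,v)$, where $g_1=g_2=v$ makes $g_1\cdot T_2=T_2$ and $(I_0\cup J_0)_0=(I\wedge J)_0$, so no new computation is needed beyond invoking that formula (or, if a self-contained check is preferred, applying the reductions (4) of Definition~\ref{def:graphsemigroup} as in the proof of Lemma~\ref{lem:Scheiblich-form}).
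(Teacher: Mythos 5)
Your proposal is correct and follows essentially the same route as the paper, whose proof of this theorem is simply the citation ``This follows from Lemma~\ref{lem:maxelements} and Theorem~\ref{thm:ECMunntrees}''; you have merely written out the details that the paper leaves implicit (idempotents of a restricted semidirect product are the elements $[I]\delta_1$, the grading $\pi$ forces the group component of an idempotent Munn $(E,C)$-tree to be the root vertex, and Lemma~\ref{lem:maxelements} supplies the maximum representative $I_0$). The verification of the explicit product formula and the injectivity/surjectivity check via the Scheiblich normal form are sound and consistent with the paper's framework.
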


\begin{proof}
 This follows from Lemma \ref{lem:maxelements} and Theorem \ref{thm:ECMunntrees}.
\end{proof}

We now analyze Examples \ref{exam:free-separation-first-one} and \ref{exam:free-inverse-monoid} in the light of the Munn tree picture.

\begin{example}
    \label{exam:free-separation-second-one}
    Let $(E_X,C)$ be as in Example \ref{exam:free-separation-first-one}. In this case we have $\FC = \FC (v)  = \FP (E_X)^\times =  \F (X) $, that is, all these objects coincide with the free group $\F (X)$ on $E_X^1= X$, with the obvious identification of the single vertex $v$ with $1\in \F (X)$. Moreover all non-empty lower subsets of $\F (X)$ are $C$-compatible, hence the semilattice 
    $\mathcal X$ is exactly the semilattice of all ordinary Munn trees over the free group $\F (X)$, together with $0$, and the free inverse semigroup $\FIS(E_X)$ is just $\FIM (X)\cup \{0\}$. (Note the $0$ is not really needed here, because we only have one vertex.) 
   The semilattice $\mathcal E $ of idempotents of $\IS (E_X,C)$ is therefore isomorphic to the semilattice $\mathcal Y = \mathcal X/{\sim} $, where $\sim $ is the relation on $\mathcal X$ introduced in Definition \ref{def:first-expansion}.   
    By Theorem \ref{thm:ECMunntrees}, we have $\IS (E_X,C) \cong \mathcal E \times^r_{\theta}\F (X)$ and the associated free partial grading 
    $$\IS (E_X,C)^\times \cong (\mathcal E \times^r_{\theta}\F (X))^\times \longrightarrow \F(X)$$
   is surjective and coincides with the homomorphism described in Example \ref{exam:free-separation-first-one}.
   
   The unique Scheiblich normal form of an element in $\IS(E_X,C)$ can be understood through the associated Munn tree. To illustrate this, take two different edges called $e,f \in X$ and consider the element $u=e^2e^{-1}ff^{-1}ef^2f^{-1}ee^{-1}f^{-2} \in \IS(E
   _X,C)$. 
   
   By applying the clever graphical algorithm developed by Munn, we can interpret the Scheiblich normal form as follows: begin at the vertex $v$ and label it as $\alpha$. Then, draw all the edges along the element $u$, leading to the terminal vertex labeled as $\beta$. The resulting drawing is the Munn tree of $u$ (see Figure \ref{fig2}). 
   
   Once the Munn tree for $u$ is constructed, we can easily identify the components of the Scheiblich normal form, both in the free inverse monoid $\FIM (X)$ and in the separated graph inverse semigroup $\IS(E_X,C)$. In both semigroups, the free group component $g$ is the geodesic path joining $\alpha$ and $\beta$, in our example it corresponds to $g=e^2f^{-1}$. The leaves, or maximal elements, of the tree represent the idempotent elements labeled $ef$, $e^2f^2$, $e^2fe$ and $e^2f^{-1}$. These give the idempotent part of the Scheiblich normal form in the free inverse monoid $\FIM(X)$:
   $$u = \Big[ (ef)(ef)^{-1}(e^2f^2)(e^2f^2)^{-1}(e^2fe)(e^2fe)^{-1}(e^2f^{-1})(e^2f^{-1})^{-1}\Big] \cdot  \Big[ e^2f^{-1}\Big].$$

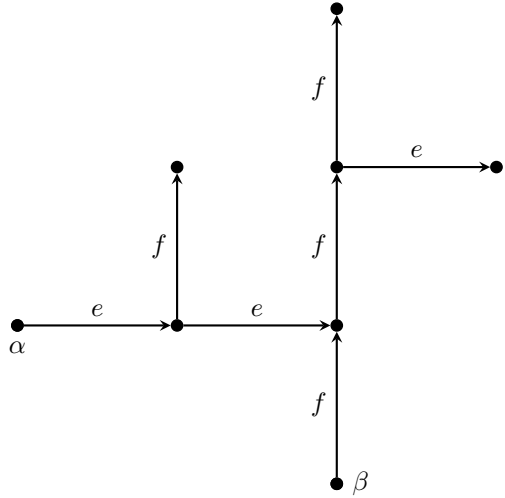
\begin{figure}[htb]
	\centering
	\begin{tikzpicture}[scale=0.7]
		\SetGraphUnit{4}
		\GraphInit[vstyle=Classic]
		\tikzset{VertexStyle/.append style={minimum size=1.5pt, inner sep=1.5pt}}
		\Vertex[Math,Lpos=-90,L={\alpha}, x=0,y=0]{a}
		\Vertex[Math,Lpos=0,L={\beta}, x=6,y=-3]{b}
		\SetVertexNoLabel
		\Vertex[Math,Lpos=-90]{v}
		\EA[Math, Lpos=0, unit=3](v){v1}
		\EA[Math, Lpos=0, unit=3](v1){v2}
		\NO[Math, Lpos=0, unit=3](v2){v3}
		\EA[Math, Lpos=0, unit=3](v3){v4}
		\NO[Math, Lpos=0, unit=3](v3){v5}
		\NO[Math, Lpos=0, unit=3](v1){v6}
        \SO[Math, Lpos=0, unit=3](v2){v7}
		\tikzset{EdgeStyle/.style = {->,>=stealth}}
		\tikzset{LabelStyle/.style = {above,fill=none}}	
		\Edge[style={->}, label=$e$](v)(v1)
		\Edge[style={->}, label=$e$](v1)(v2)
		\Edge[style={->}, label=$e$](v3)(v4)
		\tikzset{LabelStyle/.style = {left,fill=none}}
		\Edge[style={->}, label=$f$](v2)(v3)
		\Edge[style={->}, label=$f$](v3)(v5)
		\Edge[style={->}, label=$f$](v1)(v6)
        \Edge[style={->}, label=$f$](v7)(v2)
		
	\end{tikzpicture}
	\caption{Representation of Munn tree of $u=e^2e^{-1}ff^{-1}ef^2f^{-1}ee^{-1}f^{-2}$}
	\label{fig2}
	\end{figure}
In the inverse semigroup $\IS (E_X,C)$, the Scheiblich normal form is slightly different. Here, the idempotent element corresponding to the leave $g=e^2f^{-1}$ of the tree is not included
in the idempotent part of the Scheiblich normal form.
Therefore, we can express $u \in \IS (E_X,C)$ as 
$$u = \Big[ (ef)(ef)^{-1}(e^2f^2)(e^2f^2)^{-1}(e^2fe)(e^2fe)^{-1} \Big]\cdot \Big[ e^2f^{-1}\Big].$$ 
The distinction arises because $g=e^2f^{-1}$ ends in $E^{-1}$, leading the corresponding idempotent to contract to $e^2e^{-2}$ in $\IS(E_X,C)$, and $e^2$ is no longer a leave of the tree.
\end{example}

\begin{example}
    \label{exam:free-inverse-two}
Here we use the description in terms of Munn trees of the inverse semigroup $\IS(F_X,D)$ introduced in Example \ref{exam:free-inverse-monoid} to conclude that the map $\varphi\colon \FIM (X) \to v\IS (F_X,D)v$ determined by $\varphi (x) = e_xf_x^{-1}$, for $x\in X$, is an isomorphism onto $(v\IS(F_X,D)v)^\times$. 

Let $\FIS(F_X)\cong \mathcal M$ be the free inverse semigroup on $F_X$. Identify $\FIM(X)$ with the collection of Munn trees $(T,g)$ over the free group $\F (X)$. We have a semigroup homomorphism
$\phi\colon \FIM (X) \to \FIS(F_X)$ given by $\phi (x)= e_xf_x^{-1}$, and $\varphi = \rho \circ \phi$, where $\rho \colon \FIS(F_X) \to \FIS(F_X)/{\sim}= \IS (F_X,D)$ is the canonical projection, where $\sim$ is the congruence generated by the relations $e^{-1}e \sim r(e)$ for all $e\in E^1$.
(Note that every Munn $F_X$-tree is $D$-compatible, because we are dealing with the free separation on $F_X$.)

The map $\phi$ sends a Munn tree $(T,g)$ to the Munn $F_X$-tree $(\phi (T), \phi(g))$. The tree $\phi (T)$ is obtained by replacing each edge labeled by $x\in X$ by two edges labeled by $e_x$, $f_x^{-1}$, and each edge labeled by $x^{-1}$ by two edges labeled by $f_x$, $e_x^{-1}$. This is a Munn $F_X$-tree which belongs to the connected component $C_v$, and all the leaves of the tree end in either $e_x^{-1}$ or $f_x^{-1}$ for some $x\in X$. Let $(T,h)$ be a Munn $F_X$-tree such that $(T,h)\in v\mathcal M v$. Then $s(h)=r(h)=v$, hence there is a unique $g\in \F (X)$ such that $\phi (g)= h$.
In addition we have that $T$ is a finite subtree of $C_v$, the connected component of $\Gamma_{F_X}$ corresponding to $v$. Although some of the leaves of $T$ can end in $F_X$, there is a unique representative $T'$ of $T$ which has all the leaves ending in $F_X^{-1}$, and there is a unique Munn tree $(S,g)\in \FIM(X)$ such that $\phi (S,g) = (T',h)$. This shows that the composite map $\varphi = \rho\circ \phi$ is a bijection, and thus an isomorphism of semigroups, as desired.    
 \end{example}

As a first indication of the power of the Scheiblich normal form, we show next a rigidity result that generalizes \cite{mesyan-mitchell-2016}*{Corollary~26} to separated graphs.

We denote by $\Aut (E,C)$ the automorphism group of $(E,C)$. This is the subgroup of the automorphism group $\Aut(E)$ of $E$ consisting of those automorphisms $\phi= (\phi^0,\phi^1)$ of $E$ such that $\phi^1 (X) \in C$ for all $X\in C$, see \cite{ABC}*{Definition~3.2}. 
The group of all the semigroup automorphisms of $\IS(E,C)$ will be denoted by $\Aut (\IS (E,C))$. Observe that each surjective semigroup homomorphism between semigroups with zero is necessarily zero-preserving, hence all automorphisms $\varphi \in \Aut (\IS(E,C))$ satisfy $\varphi (0) = 0$.  

\begin{theorem}
    \label{thm:automorphisms}
    Let $(E,C)$ be a separated graph. Then there is a natural group isomorphism $\Aut (\IS(E,C))\cong \Aut (E,C)$.
\end{theorem}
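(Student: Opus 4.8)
The plan is to construct a natural group homomorphism $\Phi\colon\Aut(E,C)\to\Aut(\IS(E,C))$ and to show it is an isomorphism by recovering the separated graph intrinsically from the semigroup. For the forward map, any $\phi=(\phi^0,\phi^1)\in\Aut(E,C)$ permutes $E^0\cup\hat E^1$ compatibly with $s$ and $r$, and since $\phi^1(X)\in C$ for all $X\in C$ it also respects the orthogonality relation (4) of Definition \ref{def:graphsemigroup}; hence relations (1)--(4) are preserved and the universal property of $\IS(E,C)$ produces a unique endomorphism $\Phi(\phi)$ extending $\phi$ on generators, with $\Phi(\phi^{-1})$ as a two-sided inverse. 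Because $\Phi(\phi)$ agrees with $\phi$ on $E^0\cup E^1$, the assignment $\phi\mapsto\Phi(\phi)$ is an injective group homomorphism, so the whole statement reduces to proving that $\Phi$ is surjective.

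The heart of the argument is an intrinsic, order-theoretic description of vertices and edges. Using the isomorphism $\E\cong\YY$ of Theorem \ref{thm:semidirect-product}, under which idempotents correspond to finite lower $C$-compatible trees and the natural order $\le$ corresponds to reverse inclusion (Lemma \ref{lem:maxelements}), I would argue as follows. The vertices $v\in E^0$ are exactly the maximal nonzero idempotents, since these correspond to the singleton trees $\{v\}$, which are maximal in each connected component and incomparable across components. The idempotents covered by a vertex $v$ are precisely the elements $ee^{-1}$ with $e\in s^{-1}(v)$, corresponding to the two-element trees $\{v,e\}$; call these the coatoms below $v$. Finally, the edges $e\in E^1$ are exactly those $s\in\IS(E,C)^\times$ for which $s^{-1}s$ is a maximal idempotent while $ss^{-1}$ is a coatom. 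This last point is where the Scheiblich normal form (Theorem \ref{thm:uniqueness-SNF}) is essential: writing $s=(\gamma_1\gamma_1^{-1})\cdots(\gamma_n\gamma_n^{-1})\lambda$, the requirement that $ss^{-1}$ be a two-element tree forces $n=1$ with $\gamma_1$ a single forward edge $e$ and $\lambda_0\le_p e$, and then imposing that $s^{-1}s$ be a vertex discards the remaining cases $\lambda=v$, $\lambda=ex^{-1}$, and $\lambda$ a nontrivial product of inverse edges, leaving only $s=e$. Ruling out these spurious elements by this short normal-form computation is the main obstacle of the proof.

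With these descriptions in hand, fix $\varphi\in\Aut(\IS(E,C))$. Being an automorphism of an inverse semigroup, $\varphi$ automatically preserves the inverse operation, idempotents, the zero, and the natural order $\le$. Hence it restricts to a bijection $\phi^0\colon E^0\to E^0$ of maximal idempotents and, since it carries coatoms to coatoms and edges to edges, to a bijection $\phi^1\colon E^1\to E^1$. I would then verify that $\phi=(\phi^0,\phi^1)$ is a graph automorphism using the identities $\varphi(e)^{-1}\varphi(e)=\varphi(e^{-1}e)=\varphi(r(e))$ and $\varphi(e)\varphi(e)^{-1}=\varphi(ee^{-1})$, which give $r(\phi^1(e))=\phi^0(r(e))$ and $s(\phi^1(e))=\phi^0(s(e))$. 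For the separation, the key observation is that two distinct edges $e,f$ with $s(e)=s(f)$ lie in the same block of $C$ if and only if $e^{-1}f=0$ in $\IS(E,C)$; as $\varphi$ preserves products, the zero, and sources, it preserves this relation, so $\phi^1(X)\in C$ for every $X\in C$ and therefore $\phi\in\Aut(E,C)$.

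It remains only to identify $\varphi$ with $\Phi(\phi)$. By construction $\varphi$ and $\Phi(\phi)$ agree on $E^0$ and on $E^1$, and they agree on $E^{-1}$ because both preserve inverses, so they coincide on the generating set $E^0\cup\hat E^1$ and hence on all of $\IS(E,C)$. Thus $\varphi=\Phi(\phi)$ lies in the image of $\Phi$, establishing surjectivity and completing the proof that $\Phi\colon\Aut(E,C)\to\Aut(\IS(E,C))$ is the desired natural isomorphism.
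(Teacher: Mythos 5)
Your overall strategy is the same as the paper's (recover $E^0$ as the maximal idempotents, recover $E^1$ by an order-theoretic condition, then check that the induced bijection preserves $s$, $r$ and the separation), but there is a genuine gap at precisely the step you call the main obstacle. The claim that the idempotents covered by a vertex $v$ are exactly the elements $ee^{-1}$ with $e\in s^{-1}(v)$ is false. Under the isomorphism $\E\cong\YY$ of Theorem \ref{thm:semidirect-product}, the idempotents covered by $v$ correspond to the minimal elements of $\Y_0(v)\setminus\{\{v\}\}$ under inclusion, and these are the chains $x^{\downarrow}$ for \emph{all} $C$-separated paths $x=e_1^{-1}\cdots e_n^{-1}f$ with $e_1,\dots,e_n,f\in E^1$ and $n\ge 0$, not only the two-element trees (the case $n=0$). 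For example, if $x=e_1^{-1}f$ is $C$-separated (so $s(e_1)=s(f)$ and $e_1,f$ lie in different blocks of $C$), then the chain $\{v,e_1^{-1},e_1^{-1}f\}$ with $v=r(e_1)$ is covered by $\{v\}$, because the only intermediate lower set $\{v,e_1^{-1}\}$ has its maximal element ending in $E^{-1}$ and hence lies outside $\Y_0$, i.e.\ represents no idempotent. Such coatoms can exist even when $s^{-1}(v)=\emptyset$. This is exactly why the paper's proof asserts that the maximal elements of $\E\setminus E^0$ are the $xx^{-1}$ with $x=e_1^{-1}\cdots e_n^{-1}f$, $n\ge 0$.

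As a consequence, your normal-form analysis of the elements $s$ with $s^{-1}s$ maximal and $ss^{-1}$ a coatom is incomplete: from the order-theoretic data you may only conclude that $\varphi(e)\varphi(e)^{-1}$ is covered by a vertex, not that it is a two-element tree, so the case $\varphi(ee^{-1})=xx^{-1}$ with $n\ge 1$ must be ruled out, and your case analysis (``forces $n=1$ with $\gamma_1$ a single forward edge'') never touches it. Your stated characterization of $E^1$ is in fact true, but proving it requires excluding these longer chains, and that exclusion is where the paper's proof does its real work: writing $\varphi(e)=(\gamma_1\gamma_1^{-1})\cdots(\gamma_r\gamma_r^{-1})\lambda$, uniqueness of the Scheiblich normal form (Theorem \ref{thm:uniqueness-SNF}) gives $r=1$ and $\gamma_1=x$, and then one computes $\varphi(e^{-1}e)=[g_1\cdots g_t y^{-1}y g_t^{-1}\cdots g_1^{-1}][g_1\cdots g_t zz^{-1}g_t^{-1}\cdots g_1^{-1}]$ and checks that its normal form cannot be a single vertex unless $n=0=t$, forcing $\varphi(e)=f\in E^1$. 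Your proposal needs this computation (or an equivalent one, e.g.\ a Munn-tree argument showing that if $ss^{-1}$ has tree $x^{\downarrow}$ and $s^{-1}s$ is a vertex, then the marked vertex of the tree of $s$ must be a forward path lying in $x^{\downarrow}$, which forces $n=0$); without it, surjectivity is not established. The remaining parts of your proposal (the construction and injectivity of $\Phi$, the recovery of $s$ and $r$, and the criterion that $e,f$ with a common source lie in the same block iff $e^{-1}f=0$) are correct.
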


\begin{proof}
We define a map $\delta \colon \Aut (E,C) \to \Aut (\IS(E,C))$ by
$$\delta (\phi) (v) = \phi^0(v),\quad \delta (\phi)(e) = \phi^1(e),\quad \delta(\phi) (e^{-1}) = (\phi(e))^{-1}, \quad \delta (\phi) (0)=0 , $$ 
where $\phi = (\phi^0,\phi^1) \in \Aut (E,C)$, $v\in E^0$, $e\in E^1$. We need to show that the above assignments define a unique semigroup automorphism $\delta (\phi)$ of $\IS(E,C)$. For this we need to check that $\varphi: =  \delta (\phi)$ preserves the defining relations (1)--(4) of $\IS(E,C)$ (see Definition \ref{def:graphsemigroup}). 
Indeed preservation of (2) and (3) follows immediately from the fact that $\phi $ is a graph homomorphism. Preservation of (1) follows from the fact that $\phi^0$ is injective. 
To show that relation (4) is preserved by $\varphi$, take $e,f\in X$, where $X\in C_v$
for $v\in E^0$. Then $Y:=\phi^1(X)\in C_{\phi^0(v)}$ because $\phi\in \Aut (E,C)$. 
If $e= f$ then we have
$$\varphi (e^{-1}) \varphi (f) = \phi^1 (e)^{-1} \phi^1 (e) = r(\phi^1(e))= \phi^0 (r(e)) =\varphi (r(e)) ,$$
showing the preservation of (4) in this case. If $e\ne f$ then $\phi^1(e) \ne \phi^1(f)$ because $\phi^1$ is injective, and since $\phi^1(e),\phi^1(f)\in Y\in C_{\phi^0(v)}$, we have
$$\varphi (e^{-1}) \varphi (f) = \phi^1 (e)^{-1} \phi^1 (f) = 0.$$
Hence relation (4) is preserved by $\varphi$. 

We therefore have a well-defined semigroup homomorphism $\delta (\phi)$ of $\IS(E,C)$.
It is clear that $\delta$ preserves the identity and sends the composition of two automorphisms of $(E,C)$ to the corresponding composition of homomorphisms of $\IS(E,C)$. 
It follows that each $\delta (\phi)$ belongs to $\Aut (\IS(E,C))$, and that $\delta $ is a group homomorphism from $\Aut (E,C)$ to $\Aut (\IS(E,C))$. Note that Theorem \ref{thm:uniqueness-SNF} gives in particular that the natural map  $E^0\cup E^1\to \IS (E,C)$ is injective, and it follows readily from this that $\delta$ is injective.

It remains to show that $\delta$ is surjective. This will be done by an adaptation of the argument in \cite{mesyan-mitchell-2016}*{Proposition~22} to our setting, using the uniqueness of Scheiblich normal form (Theorem \ref{thm:uniqueness-SNF}). Let $\varphi \in \Aut (\IS(E,C))$. We want to show that $\varphi (E^0) = E^0$ and $\varphi (E^1) = E^1$.
Observe that $\varphi$ restricts to an order-automorphism of the semilatice 
of idempotents $\mathcal E$ of $\IS(E,C)$. Now it is clear from Theorem \ref{thm:uniqueness-SNF} that the maximal elements of $\mathcal E$ are precisely the vertices of $E$, hence we obtain that $\varphi (E^0) = E^0$. Now it follows again from Theorem \ref{thm:uniqueness-SNF} that the maximal elements of $\mathcal E \setminus E^0$ are the idempotents of the form $xx^{-1}$, where $x= e_1^{-1} \cdots e_n^{-1} f$ is a $C$-separated path, with $e_1,\dots ,e_n,f \in E^1$, and $n\ge 0$.

Let $e\in E^1$. We want to show that $\varphi (e) \in E^1$. Since $ee^{-1}$ is a maximal element of $\mathcal E \setminus E^0$, and $\varphi$ restricts to a permutation of $\max ( \mathcal E \setminus E^0)$, it follows from the above that 
$$\varphi (ee^{-1}) = xx^{-1}, \quad \text{where} \quad  x= e_1^{-1}\cdots e_n^{-1} f \in \FC,$$
with $e_1,\dots ,e_n,f\in E^1$ and $n\ge 0$.  Now write
$$\varphi (e) = (\gamma_1\gamma_1^{-1}) \cdots (\gamma_r\gamma_r^{-1}) \lambda $$
in its unique Scheiblich normal form. Then we have
$$xx^{-1} = \varphi (ee^{-1}) = \varphi (e) \varphi (e)^{-1} = (\gamma_1\gamma_1^{-1})\cdots (\gamma_r \gamma_r^{-1}) .$$
Hence by uniqueness of the Scheiblich normal form, we have $r=1$ and $\gamma_1 = x=e_1^{-1}\cdots e_n^{-1} f$. Now we can write $\lambda = yg^{-1}$, where $g= g_1\cdots g_t$, with $g_i\in E^1$ for all $i$, $t\ge 0$, and $x= yz$, where $y$ is largest initial segment of $x$ which is also an initial segment of $\lambda$. 

We compute
\begin{align*}
    \varphi (e^{-1}e) & = \varphi(e)^{-1} \varphi (e) \\
    & = \lambda^{-1} xx^{-1} \lambda  \\
    & = gy^{-1} (yzz^{-1}y^{-1}) yg^{-1} \\
    & = [gy^{-1}yg^{-1}][gzz^{-1}g^{-1}]\\
    & = [g_1\cdots g_t y^{-1}y g_t^{-1} \cdots g_1^{-1}][g_1\cdots g_tzz^{-1} g_t^{-1}\cdots g_1^{-1} ]
\end{align*}
On the other hand $\varphi (e^{-1}e) = \varphi (r(e))$, which is a vertex of $E$. 
It is now a simple matter to show, using Theorem \ref{thm:uniqueness-SNF} that for any
possible choice of $y$, the Scheiblich normal form of $[g_1\cdots g_t y^{-1}y g_t^{-1} \cdots g_1^{-1}][g_1\cdots g_tzz^{-1} g_t^{-1}\cdots g_1^{-1} ]$ is never trivial, except in the case where $n=0=t$, in which case we get $\varphi (e) = f$. This shows that $\varphi (E^1) \subseteq E^1$, and since $\varphi$ is an automorphism we get that $\varphi (E^1) = E^1$.  

Finally we show that $\phi := (\varphi|_{E^0},\varphi|_{E^1})$ is an automorphism of $(E,C)$. As in the proof of \cite{mesyan-mitchell-2016}*{Proposition~22}, we get for 
$e\in E^1$ that 
$$ \phi^1(e) =  \varphi (e) = \varphi (s(e) e r(e)) = \phi^0(s(e)) \phi^1(e) \phi^0(r(e)).$$
Hence $s(\phi^1(e)) = \phi^0(s(e))$ and $r(\phi^1(e)) = \phi^0(r(e))$, showing that $\phi$ is a graph morphism, hence a graph automorphism. 
Suppose now that $X\in C_v$ for some $v\in E^0$. Then $\phi^1(X) =\varphi (X)$ is a subset of $s^{-1} (\phi^0(v))$. If there exist $e,f\in X$ such that $\varphi (e) \in Y_1 $ and $\varphi (f) \in Y_2$ for distinct $Y_1,Y_2\in C_{\phi^(v)}$, then 
$[\varphi (e)\varphi(e)^{-1}][\varphi (f)\varphi (f)^{-1}]\ne 0$ in $\IS(E,C)$, and so
$$0= \varphi (ee^{-1}ff^{-1}) = [\varphi (e)\varphi(e)^{-1}][\varphi (f)\varphi (f)^{-1}]\ne 0, $$
which is a contradiction. Hence there is a unique $Y\in C_{\phi^0(v)}$ such that $\varphi (X)\subseteq Y$. Using again that $\varphi$ is an automorphism we get $\varphi (X) = Y$. We conclude that $\phi\in \Aut (E,C)$, and clearly  $\delta (\phi) = \varphi$. Hence $\delta $ is surjective. This concludes the proof. 
\end{proof}

\begin{remark}
\label{remark:extension-to-isos}
Theorem \ref{thm:automorphisms} can be easily generalized to isomorphisms. Concretely, the assignment $(E,C) \mapsto \IS(E,C)$ can be extended to a full and faithful functor from the category of separated graphs with isomorphisms of separated graphs as morphisms to the category of inverse semigroups with semigroup isomorphisms as morphisms. In particular, for separated graphs $(E,C)$ and $(F,D)$, we have $(E,C)\cong (F,D)$ if and only if $\IS(E,C)\cong \IS (F,D)$. 
\end{remark}

\section{The tight spectrum}
\label{sect:tightsection}

Before we continue with our analysis of (inverse) semigroups of separated graphs, let us discard a trivial part of them, namely the \emph{isolated vertices}, that is, the vertices $v\in E^0$ with $s^{-1}(v)=r^{-1}(v)=\emptyset$. If $E^0_{\mathrm{iso}}$ denotes the set of isolated vertices, notice that these are idempotent elements of both semigroups $S=\S(E,C)$ or $S=\IS(E,C)$ that are orthogonal to every other element. This means that $S$ contains a (commutative) inverse subsemigroup $E^0_{\mathrm{iso}}\cup \{0\}$ consisting of pairwise orthogonal idempotents that do not interact with the other non-zero elements of the semigroup. This ``isolated part'' of $S$ will not be interesting in any respect, as for instance in the algebras we are going to consider later, this only contributes to a trivial commutative direct sum of some copies of the underlying ground field (or ring). Hence there is no loss of generality in assuming $E^0_{\mathrm{iso}}=\emptyset$, so we postulate, from now on, the following:

\vskip 0,5pc

{\bf Standing assumption:} \emph{we shall assume, from now on, that all our graphs $E$ have no isolated vertices, that is, $E^0_{\mathrm{iso}}=\emptyset$.}

\vskip 0,5pc

We fix for the entire section, a separated graph $(E,C)$ (without isolated vertices) and look at its associated inverse semigroup $\IS(E,C)$.
Now that we have a normal form for its elements, we develop Exel's program \cite{Exel:Inverse_combinatorial} for the inverse semigroup $\IS(E,C)$, describing its \emph{tight spectrum}.
We will use the picture of $\IS(E,C)$ as the restricted semidirect product $\YY\rtimes_\theta^r\F$ obtained in Theorem \ref{thm:ECMunntrees}. This gives, in particular, a nice parametrization of the semilattice $\E(\IS(E,C))\cong \YY$, which makes computations much easier.

We will identify the space of ultrafilters and the space of tight filters on the semilattice $\YY$. We refer to \cite{Exel:Inverse_combinatorial} for background on these concepts, but recall here quickly the main general ingredients.

Given a semilattice $\E$ with zero, we write $\dual\E$ for its spectrum. This can be viewed either as the space of characters $\dual\E$, that is, non-zero homomorphisms $\chi\colon \E\to \{0,1\}$ with $\chi(0)=0$, or equivalently, as the space of filters $\mathcal F(\E):=\{\xi\sbe \E^\times: \xi=\chi^{-1}(1), \chi\in \dual\E\}$. The map $\chi\mapsto \chi^{-1}(1)$ yields a bijection $\dual\E\congto \mathcal F(\E)$. This is a locally compact Hausdorff and totally disconnected space with the topology inherited from the compact product space $\{0,1\}^\E$; equivalently, this is the topology of pointwise convergence on characters, and it is the smallest topology that makes the sets of filters $\{\xi\in \mathcal F: e\in\xi\}$ into clopen subsets for all $e\in \E^\times$. In other words, the topology has as basis of compact open subsets the following subsets of filters:
$$\U(e;f_1,\ldots,f_n):=\{\xi\in\mathcal F: e\in \xi, f_i\notin \xi\,\forall i=1,\ldots,n, n\ge 0\}.$$

An {\it ultrafilter} is a maximal filter, and an {\it ultracharacter} is a character $\chi \in \dual\E$ such that the corresponding filter $\chi^{-1}(1)$ is an ultrafilter.
The {\it tight spectrum} $\Etight$ of $\E$ is the closure in $\dual\E$ of the space of ultracharacters. The elements of $\Etight$ are called {\it tight characters} and the corresponding filters are called {\it tight filters}. 

Recall that $\YY$ consists of certain finite lower subsets of $\FC$.
We will now also consider infinite lower subsets $Z$ in $\FC$, with respect to the prefix order $\le_p$. Geometrically, these infinite lower subsets correspond to {\it infinite Munn $(E,C)$-trees}. In the following, we say that an element $z$ of $\FG(E)$ {\it can be extended}
to another element $z'$ of $\FG(E)$ if $z$ is an initial segment of $z'$, that is, if $z\le_p z'$. 
 
\begin{definition}
	\label{def:filters-ultra-tight}
	\begin{enumerate}
		\item For $v\in E^0$, let $\mathfrak F(v)$ be the set of all (possibly infinite) non-empty $C$-compatible lower subsets $Z$ of $\FC(v)$ such that each $z\in Z$ can be extended to an element of $Z$ not ending in $E^{-1}$. Set $\mathfrak F = \bigsqcup_{v\in E^0} \mathfrak F (v)$. 
		\item We order $\mathfrak F$ by inclusion, and let $\mathfrak U$ be the set of all maximal elements $Z\in \mathfrak F$.
		 \end{enumerate}
\end{definition}

Notice that two elements of $\mathfrak F$ can only be comparable (with respect to inclusion) if both are inside $\mathfrak F(v)$ for the same vertex $v\in E^0$.

\begin{proposition}
\label{prop:characterizing-filt-ultra-tight}
With the above notation:
\begin{enumerate}
    \item The poset $\mathcal{F}$ of filters of $\YY$ is order-isomorphic to the poset $\mathfrak{F}$.
    \item In particular, this order-isomorphism restricts to a bijection between the set of ultrafilters of $\YY$ and the subset $\mathfrak{U} \subseteq \mathfrak{F}$.
\end{enumerate}
\end{proposition}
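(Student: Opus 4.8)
The plan is to exhibit the order-isomorphism concretely by the two obvious maps: a filter on $\YY$ is sent to the union of (the $\Y_0$-representatives of) its members, which will turn out to be one of the infinite $(E,C)$-trees in $\mathfrak F$, and conversely a tree $Z\in\mathfrak F$ is sent to the set of all finite $\Y_0$-subsets contained in it. Throughout I would identify $\YY^\times$ with $\Y_0$ via the section $\sigma$ of Lemma~\ref{lem:maxelements}, so that every element of $\YY$ is a finite $C$-compatible lower subset of $\FC(v)$ all of whose maximal elements avoid $E^{-1}$, and so that for such representatives $A\le B$ in $\YY$ means exactly $B\subseteq A$ as subsets of $\FC$. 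With this dictionary I would define $\Phi\colon\mathcal F\to\mathfrak F$ by $\Phi(\xi)=\bigcup_{A\in\xi}A$ and $\Psi\colon\mathfrak F\to\mathcal F$ by $\Psi(Z)=\{A\in\Y_0:A\subseteq Z\}$.

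The first task is well-definedness. For $\Phi$: a union of lower sets is lower; it is $C$-compatible because any two of its elements lie in a common $A\cup B$ with $A,B\in\xi$, and since the meet $A\wedge B\in\xi$ is nonzero its representative $A\cup B$ is $C$-compatible; and the extension condition holds because any $z\in\Phi(\xi)$ lies in some $A\in\xi\subseteq\Y_0$ and extends inside the lower set $A$ to a maximal element of $A$, which by definition of $\Y_0$ does not end in $E^{-1}$. For $\Psi$: the set is nonempty since any $z\in Z$ extends to some $m\in Z$ not ending in $E^{-1}$, whence $m^\downarrow\in\Y_0$ lies in $\Psi(Z)$; it is upward closed because $A\le B$ forces $B\subseteq A\subseteq Z$; and it is meet-closed because $(A\cup B)_0\subseteq A\cup B\subseteq Z$ again lies in $\Y_0$, with $C$-compatibility of $Z$ guaranteeing the meet is nonzero. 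So $\Phi(\xi)\in\mathfrak F(v)$ whenever $\xi$ is supported on vertex $v$, and $\Psi(Z)$ is a genuine filter.

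It remains to check that $\Phi$ and $\Psi$ are mutually inverse and monotone. The identity $\Phi\circ\Psi=\id$ is immediate from the extension condition on $Z$. The equality $\Psi\circ\Phi=\id$ is the crux and the main obstacle: the inclusion $\xi\subseteq\Psi(\Phi(\xi))$ is trivial, but for the reverse one must show that a finite $A\in\Y_0$ with $A\subseteq\Phi(\xi)$ already lies in $\xi$. Here I would argue that each maximal element $m$ of $A$ lies in some member $B\in\xi$, so $m^\downarrow\subseteq B$ gives $B\le m^\downarrow$ and hence $m^\downarrow\in\xi$ by upward closure, and then, since $A=\bigwedge_{m\in\max A} m^\downarrow$ in $\YY$, meet-closure of $\xi$ yields $A\in\xi$; this is exactly the step where directedness of filters is used essentially. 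Finally $\Phi$ visibly preserves inclusion in both directions, so it is an order-isomorphism, proving~(1); and being an order-isomorphism it carries maximal filters to maximal elements of $\mathfrak F$, that is, ultrafilters of $\YY$ to $\mathfrak U$, which is~(2).
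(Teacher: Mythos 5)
Your proof is correct and takes essentially the same approach as the paper: your map $\Psi$ is precisely the paper's order-isomorphism $\varphi(Z)=\{[I]\in\YY^\times: I_0\subseteq Z\}$ (under the identification $[I]\leftrightarrow I_0$ from Lemma~\ref{lem:maxelements}), and your $\Phi$ coincides with the inverse that the paper constructs in its surjectivity argument. The only difference is organizational---you exhibit both maps and check they are mutually inverse and monotone, while the paper verifies that $\varphi$ is a well-defined order-embedding and then proves surjectivity.
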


\begin{proof}
	(1) We show that the map
	$$\varphi \colon \mathfrak{F} \mapsto \mathcal F$$
given by $\varphi (Z)= \{[I]\in \YY^\times: I_0 \subseteq Z \}$
is an order-isomorphism. First we observe that $\varphi$ is well defined, that is, $\varphi (Z)$ is a filter in $\YY$. Indeed, suppose that $[I]\le [J]$ in $\YY$ and $[I]\in \varphi (Z) $. Then we have $J_0\subseteq I_0$ by 
Lemma~\ref{lem:maxelements}, and so $J_0\subseteq I_0 \subseteq Z$, showing that $[J]\in \varphi (Z)$. 
If $[I],[J]\in \varphi (Z)$, then $I_0\subseteq Z$ and $J_0\subseteq Z$, so that $I_0 \cup J_0\subseteq Z$ is $C$-compatible, because $Z$ is $C$-compatible, and we have
$$[I] \cdot [J] = [I_0 \cup J_0]\in \varphi (Z).$$ 	
	By definition $0\notin \varphi (Z)$. 

We now show that $Z\subseteq Z'$ if and only if $\varphi (Z) \subseteq \varphi (Z')$. If $Z\subseteq Z'$, and $[I]\in \varphi (Z)$, then $I_0 \subseteq Z \subseteq Z'$ so that $[I]\in \varphi (Z')$.

Suppose now that $\varphi (Z)\subseteq \varphi (Z')$, and take $h\in Z$.
By our hypotheses on the elements of $\mathfrak F$, we may find an element $g\in Z$ such that $h\le_p g$ and $g$ does not end in $E^{-1}$. Then $(g^\downarrow)_0 = g^\downarrow$, and since $(g^\downarrow)_0 \subseteq Z$, we have $[g^\downarrow]\in  \varphi (Z)\subseteq \varphi (Z')$, showing that $g^\downarrow \subseteq Z'$, and thus $g\in Z'$. Now $Z'$ is a lower set and $h\le_p g$, thus $h\in Z'$.    

This shows in particular that $\varphi$ is injective. To show that it is surjective, take any filter $\eta $ on $\YY$. Observe that since $0\notin \eta$ and $\eta$ is closed under products, there must exist $v\in E^0$ such that all elements of $\eta$ belong to $\Y (v)$.

Let $C$ be the collection of all elements of the form $g_0$, where $[g^\downarrow]\in \eta$, and $g= g_0w$ is the canonical decomposition of $g$ (see Lemma \ref{lem:maxelements}). Let $Z$ be the lower subset of $\FC (v)$ generated by $C$. We claim that $Z\in \mathfrak F$ and that $\varphi (Z)= \eta$.  Let $g,h\in \FC (v)$ such that $[g^\downarrow] , [h^\downarrow] \in \eta$. Then $[g^\downarrow] \cdot [h^\downarrow] \in \eta$, and in particular $[g^\downarrow] \cdot [h^\downarrow]  \ne 0$. It follows that $g$ and $h$ are $C$-compatible, and hence so are $g_0$ and $h_0$. It follows that any two elements of $Z$ are $C$-compatible. Take any $z\in Z$. Then there exists $g\in \FC (v)$ such that $[g^\downarrow]\in \eta$ and $z\le_p g_0$, where $g=g_0w$ is the canonical decomposition of $g$. It follows that $z$ can be extended to an element $g_0\in Z$ which does not end in $E^{-1}$. Hence we conclude that $Z\in \mathfrak F$. It remains to show that $\varphi (Z) = \eta$. We first show that $\varphi (Z)\subseteq \eta$. Let $[I]\in \varphi (Z)$. Then $I_0 \subseteq Z$ by the definition of $\varphi$. Hence for all $g\in I$ we have $g_0\in I_0 \subseteq Z$, so that $g_0\in Z$ and thus $[g_0^\downarrow]\in \eta$ by the definition of $Z$. Hence 
$$[I] = \prod _{g\in I} [g_0^\downarrow] \in \eta.$$
Suppose now that $[I]\in \eta$. We have to show that $I_0 \subseteq Z$, and for this it is enough to show that $g_0\in Z$ for all $g\in I$. But if $g\in I$, then $[I] \le [g^{\downarrow}]$ and thus $[g^{\downarrow}] \in \eta$ because $\eta$ is a filter. It follows that $g_0\in Z$, as desired.

(2) follows from (1) and the definition of ultrafilter.     
 	\end{proof}

Using the above proposition, we will identify in the following the set of filters on $\YY$ with $\mathfrak F$.

\begin{remark}
    A similar argument as above shows that the set of filters $\dual\Y$ 
    of the semilattice $\Y$ can be canonically identified with the set $\tilde{\mathcal F}$ of all non-empty lower $C$-compatible 
    subsets of $\FC$ via the map $\psi\colon \widetilde{\mathcal F}\to \dual\Y$ that sends 
    $Z\in \tilde{\mathcal F}$ to $\psi(Z):=\{I\in \Y^\times: I\sbe Z\}$. The inverse map sends a filter 
    $\xi\in \dual\Y$ to $\psi^{-1}(\xi)=\{g\in \FC: g^\downarrow \in \xi\}$.
\end{remark}

\begin{example}
    \label{exam:non-separated-filters-and-ultrafilters}
    Let $E$ be a non-separated graph, that is, a directed graph endowed with the trivial separation $C$, where $C_v= \{s^{-1}(v)\}$ for all $v\in E^0$. 
    Then the elements of $\FC$ are the vertices of $E$ and the paths of the form $$e_1\cdots e_n  f_1^{-1} \cdots f_m^{-1}, \quad \text{ where } n,m\ge 0, \, \, n+m>0 , \,\, e_i,f_j\in E^1, e_n\ne f_1.$$
    Moreover, an element of $\mathfrak F$ must be the set of all initial segments of a finite or infinite path on $E$ (not involving ghost edges $e^{-1}$).
    Hence the elements of $\mathfrak F$ can be identified with the finite or infinite paths on $E$, and the set $\mathfrak U$ can be identified with the union of the set of infinite paths and the set of finite paths ending in a sink. In particular all sinks and all infinite paths on $E$ belong to $\mathfrak U$.  
\end{example}

We now proceed to describe the space of ultrafilters $\mathfrak U$. This space has been described in \cites{Ara-Exel:Dynamical_systems, Lolk:tame} for certain special cases -- either for finite bipartite or finitely separated graphs.

It is convenient to define first \emph{local configurations} of arbitrary non-trivial lower subsets of $\FC$, as follows. In what follows, a lower subset $Z$ of $\FC(v)$, for $v\in E^0$, will be called {\it non-trivial} if it contains at least one element $z\ne v$, or equivalently $|Z| \ge 2$.   

\begin{definition}
	\label{def:local-configuration}
	A {\it local configuration $\mathfrak c$ at $v\in E^0$} is any non-empty subset
	$$ \mathfrak c \subseteq \{ x\in E^1\cup E^{-1}: s(x) =v\}=(E^1\cup E^{-1})\cap s^{-1}(v).$$
	A local configuration $\mathfrak c$ is {\it admissible} if $\mathfrak c$ is $C$-compatible, and $\mathfrak c$ is {\it maximal} if it is admissible and there does not exist any admissible local configuration $\mathfrak c'$ such that $\mathfrak c\subsetneq \mathfrak c'$. The notion of $C$-compatibility we use here is from Definition~\ref{def:Ccompatible}. Since here we are talking only about subsets $\mathfrak c\sbe (E^1\cup E^{-1})\cap s^{-1}(v)$, notice that such a subset can only be $C$-incompatible if it contains two different edges $e,f\in X$ for some $X\in C_v$.
\end{definition}
	
	Let $Z$ be a non-trivial lower subset of $\FC (v)$ with respect to the prefix order, for some $v\in E^0$. For $g\in Z$, we define the {\it local configuration} of $Z$ at $g$ as:
	$$Z_g = (E^1 \cup E^{-1})\cap g^{-1} \cdot Z:=\{ x\in E^1 \cup E^{-1}: x\in g^{-1}\cdot  Z\}=\{x\in E^1\cup E^{-1}: g\cdot x\in Z\}.$$
	If $g=v$, then $Z_v$ is a local configuration at $v$ as defined above.  If $g\ne v$, notice that $Z_g$ is a local configuration at $r(g)\in E^0$ as defined above.
 In the latter case, we define the {\it tail} of the local configuration $Z_g$ as $y^{-1}$, where $y\in E^1\cup E^{-1}$ is the last letter of $g$. Note that $y^{-1}\in Z_g$ and $r(g) = r(y)=s(y^{-1})$. All other elements of $Z_g$ are called the {\it leaves} of the local configuration $Z_g$.

\begin{figure}[htb]
    \centering
		\begin{tikzpicture}[scale=0.7, rotate=45]
		\SetGraphUnit{6}
		\GraphInit[vstyle=Classic]
		\tikzset{VertexStyle/.append style={minimum size=1.5pt, inner sep=1.5pt}}
		\Vertex[Math, Lpos=-90]{v}
        \SetVertexNoLabel
        \Vertex[x=6.7, y=-2.8]{T}.
        \Vertex[x=8, y=-2]{P}.
        \EA[Math, Lpos=0, unit=4](v){g}
        \EA[Math, Lpos=-90, unit=2](g){w}
        \tikzset{VertexStyle/.append style={minimum size=0pt, inner sep=0pt}}
        \EA[Math, Lpos = -90, unit=2.5](w){w_1}
        \SO[Math, Lpos=-90, unit=2.5](w){w_2}
        \SOEA[Math, Lpos=-90, unit=1.8](w){w_3}
        \NO[Math, Lpos=-90, unit=2.5](w){w_4}
        \NOEA[Math, Lpos=135, unit=1.8](w){w_5}
        \tikzset{EdgeStyle/.style = {->, >=stealth, scale=5, thick}}
        \Edge[style={->,out=45, in=225, looseness=1.5},labelstyle={above,fill=none}, label=\textbf{$g_1$}](v)(g)
        \Edge[style={->,out=45, in=200, looseness=1.5}, labelstyle={above,fill=none}, label=\textbf{$e$}](g)(w)
        \Edge[style={->}](w)(w_1)
        \Edge[style={->}](w)(w_4)
        \Edge[style={->}, label=\textbf{leaves}](w)(w_5)
        \Edge[style={->}](w_3)(w)
        \Edge[style={->}, label=$\textbf{leaves}$](T)(w)
        \Edge[style={->}](P)(w)
        \tikzset{VertexStyle/.append style={minimum size=0pt, inner sep=0pt}}
        \SetVertexLabel
        \SetVertexMath
        \Vertex[Math,L=\bf{tail},x=4.5,y=0]{a}
		\end{tikzpicture}
        \caption{Representation of leaves and tail of the local configuration $Z_g$ for $g$ of the form $g = g_1e$. Note that the only tail is $e^{-1}$ and the leaves are all the other edges coming to and leaving from $r(g) = r(e)$. Geometrically, $Z_g$ represents the set of vertices at distance one of the vertex $g$ in the $E$-tree corresponding to $Z$.}
        \label{fig1}
\end{figure}
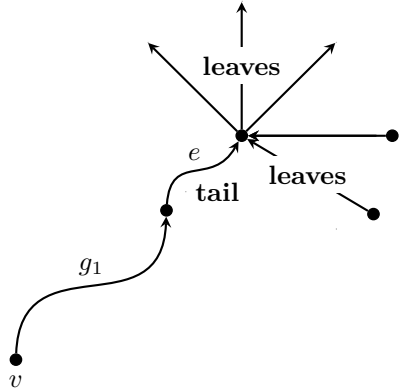

    Notice that $g\cdot Z_g\sbe Z$ are exactly the elements of $Z$ that have distance $1$ to $g\in Z$ with respect to the length metric in the Cayley graph $\Gamma_E$ of the fundamental group $\FG(E)$. Notice also that $g\in \max (Z)$ if and only if the tail of $Z_g$ is the unique element of $Z_g$. 

\begin{lemma}
	\label{lem:properties-of-configs}
	\begin{enumerate}
		\item A local configuration $\mathfrak c$ at $v\in E^0$ is maximal if and only if for each $X \in C_v$ there exists $e_X \in X$ such that 
		$$\mathfrak c = \{ e_X : X\in C_v\}\cup \{e^{-1}: e\in r^{-1}(v)\}.$$
		\item  A local configuration $\mathfrak c$ is admissible if and only if it is contained in a maximal local configuration.
		\item If $Z$ is any non-trivial lower subset of $\FC(v)$ for some $v\in E^0$, then $Z$ is $C$-compatible if and only if $Z_g$ is an admissible local configuration for each $g\in Z$.  
	\end{enumerate}
\end{lemma}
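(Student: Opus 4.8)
The plan is to isolate, before splitting into cases, the elementary fact that controls $C$-compatibility of a set of single letters. A local configuration $\mathfrak c$ consists of letters $x,y\in(E^1\cup E^{-1})\cap s^{-1}(v)$, so any two distinct such letters have largest common prefix equal to the trivial path $v$; by Remark~\ref{rem:onCcompatibility} they are $C$-incompatible precisely when both lie in $E^1$ and belong to a common block $X\in C_v$. Inverse edges never obstruct, since the blocks of $C$ consist of ordinary edges in $E^1$. Thus the key reduction I would record at the outset is: $\mathfrak c$ is admissible if and only if $|\mathfrak c\cap X|\le 1$ for every $X\in C_v$, with no constraint whatsoever on the inverse edges $\{e^{-1}:e\in r^{-1}(v)\}$. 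Parts (1) and (2) then follow almost immediately from this.

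For (1), I would first check that a set of the displayed form is admissible (it meets each block in a single edge) and maximal (enlarging it either duplicates an edge in some block, destroying admissibility, or tries to adjoin an inverse edge, but every inverse edge with source $v$ is already present). Conversely, given a maximal $\mathfrak c$, maximality forces it to contain every $e^{-1}$ with $e\in r^{-1}(v)$ (adjoining one preserves admissibility) and exactly one edge from each block $X\in C_v$ (adjoining an edge to a missed block preserves admissibility), which is exactly the displayed form. For (2), the backward direction is immediate because subsets of $C$-compatible sets are $C$-compatible; for the forward direction I would complete an admissible $\mathfrak c$ to a maximal one by choosing, for each block $X$ that $\mathfrak c$ misses, an arbitrary $e_X\in X$ (invoking choice should $C_v$ be infinite) and adjoining all the inverse edges.

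The substance lies in (3), which relates global $C$-compatibility of a lower set $Z$ to local admissibility of every $Z_g$. For the forward implication I would take $g\in Z$ and two distinct $x,y\in Z_g$ in a common block $X\in C_{r(g)}$ (the case $g=v$ being covered by $r(v)=v$); then $gx,gy\in Z$ and $\red\bigl((gy)^{-1}(gx)\bigr)=\red(y^{-1}x)=y^{-1}x\notin\FC$, contradicting compatibility of $Z$, so each $Z_g$ is admissible. The more delicate converse is where I expect the main obstacle, and the key geometric point is that the divergence point of an incompatible pair already lives inside $Z$: if some $\gamma,\nu\in Z$ were $C$-incompatible, Remark~\ref{rem:onCcompatibility} writes $\gamma=ux\gamma'$ and $\nu=uy\nu'$ with $u$ the largest common prefix and $x\ne y$ in a common block $X$; since $Z$ is a lower set and $u,ux,uy\le_p\gamma$ or $\nu$, all three belong to $Z$, whence $x,y\in Z_u\cap X$ and $Z_u$ fails to be admissible, a contradiction. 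The only care needed is to verify the cancellation $\red\bigl((gy)^{-1}(gx)\bigr)=y^{-1}x$ and that the divergence letters $x,y$ are genuine edges of $E^1$ (guaranteed by their membership in a block of $C$), after which both implications close.
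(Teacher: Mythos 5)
Your proof follows the same route as the paper's: parts (1) and (2) are exactly the bookkeeping the paper dismisses as ``clear'' (your arguments for them are correct, including the key observation that admissibility constrains only the $E^1$-letters and never the inverse edges), and your contrapositive treatment of the converse of (3) via Remark~\ref{rem:onCcompatibility} is the paper's computation repackaged; it is correct, and arguably slightly cleaner, since the characterization of incompatibility absorbs the comparable/incomparable case split that the paper handles separately.

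However, the forward implication of (3) has a genuine gap: you pass from $x,y\in Z_g$ to ``$gx,gy\in Z$''. By the definition following Definition~\ref{def:local-configuration}, $x\in Z_g$ means $g\cdot x=\red(gx)\in Z$, and the concatenation $gx$ need not be reduced: this fails exactly when $g$ ends in the letter $x^{-1}$. That case is not vacuous a priori --- if $g=g''x^{-1}$ then $x$ is the \emph{tail} of $Z_g$, hence automatically belongs to $Z_g$ because $Z$ is a lower set --- and it is precisely the second case the paper treats separately. Nor is the repair the one you flag at the end (``verify the cancellation''): the identity $\red\bigl((g\cdot y)^{-1}(g\cdot x)\bigr)=y^{-1}x$ actually survives in this case, but the contradiction you want from it can degenerate; for instance if $g=x^{-1}$ then $g\cdot x=v$, and the vertex is $C$-compatible with everything by the convention in Definition~\ref{def:Ccompatible}, so ``two $C$-incompatible elements of $Z$'' is simply not available. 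What closes the case is a different contradiction, the one the paper uses: if $g=g''x^{-1}$ and $y\in Z_g$ with $y\ne x$, then $g\cdot y=g''x^{-1}y$ is reduced as it stands and contains the forbidden subpath $x^{-1}y$ with $x,y\in X$, so $g\cdot y\notin\FC$, contradicting $g\cdot y\in Z\sbe\FC$. Adding this case (and the symmetric one where $g$ ends in $y^{-1}$) completes your proof of (3); everything else stands as written.
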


\begin{proof}
(1) and (2) are clear. 	
	
	(3) If $Z$ is $C$-compatible then, by definition, $\red (g^{-1} h)\in \FC$ for $g,h\in Z$. Suppose that we can find $u\in Z$ and $x,y\in X$ with $x,y\in Z_u$, $x\ne y$, for $X\in C_{r(u)}$. If $ux$ and $uy$ are reduced words as they stand, then $x\in u^{-1}\cdot Z$ and $y\in u^{-1}\cdot Z$ imply that $ux,uy\in Z$, and so 
	$$x^{-1}y = \red ((ux)^{-1} (uy))\in \FC ,$$
	which is a contradiction. If for instance $u=u'x^{-1}$, then $u\cdot y = uy = u'x^{-1}y \notin \FC$, and we get also a contradiction, because all elements of $Z$ belong to $\FC$. This contradction shows that the local configuration $Z_u$ at each $u\in Z$ must be admissible.
	
	Conversely suppose that the local configuration $Z_u$ is admissible for each $u\in Z$.       
		Let $g,h$ be distinct elements of $Z$. Assume first that $g,h$ are comparable with respect to $\le_p$, for instance $g\le_p h$. 
	Write $h= gh'$. Then $ g^{-1}\cdot h = \red (g^{-1} h) = h'$, and $h'\in \FC$ because $h\in Z\subseteq \FC$. 
	
	If $g,h$ are incomparable, write $g= uxg'$, $h=uyh'$, with $x\ne y$, $x,y\in E^1\cup E^{-1}$. 
	Then
	$$g^{-1} \cdot h = \red (g^{-1} h) = {g'}^{-1}x^{-1}yh',$$
	and ${g'}^{-1}x^{-1}yh'\in \FC$ because the local configuration of $Z$ at $u$ is admissible, and in addition $g',h'\in\FC$ because $g,h\in \FC$.  This implies that $Z$ is $C$-compatible. 
 \end{proof}

We can now obtain the characterization of the space of ultrafilters, as follows. Recall that the set $\mathfrak U$ has been introduced in Definition \ref{def:filters-ultra-tight}.

\begin{theorem}
	\label{cor:ultrafilters} Let $\mathfrak U'$ be the set of all non-trivial lower subsets $Z'$ of $\FC (v)$ such that $Z'_g$ is a maximal local configuration for all $g\in Z'$, where $v$ ranges over all vertices of $E$. Then there is a bijection $\mathfrak U'\cong \mathfrak U$ between $\mathfrak U '$ and the set of ultrafilters $\mathfrak U$.      
 \end{theorem}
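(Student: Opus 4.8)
The plan is to realize both $\mathfrak U$ and $\mathfrak U'$ as spaces of ultrafilters --- on the semilattices $\YY$ and $\Y$ respectively --- and to deduce the bijection from the quotient map $\pi\colon\Y\onto\YY$, $I\mapsto[I]$. Recall from the Remark following Proposition \ref{prop:characterizing-filt-ultra-tight} that $\dual\Y$ is identified with the set $\widetilde{\mathcal F}$ of all non-empty $C$-compatible lower subsets of $\FC$, while Proposition \ref{prop:characterizing-filt-ultra-tight} identifies the filters $\mathcal F$ of $\YY$ with $\mathfrak F$. Under these identifications I would first check that $\mathfrak U'$ is exactly the set of maximal elements of $\widetilde{\mathcal F}$, i.e.\ the ultrafilters of $\Y$: by Lemma \ref{lem:properties-of-configs}(3) a lower subset is $C$-compatible iff all its local configurations are admissible, and if some $Z'_g$ fails to be maximal one enlarges $Z'$ to the strictly bigger $C$-compatible lower set $Z'\cup\{g\cdot x\}$ for $x$ in an admissible configuration properly containing $Z'_g$; hence $Z'\in\widetilde{\mathcal F}$ is maximal precisely when every $Z'_g$ is maximal (non-triviality of maximal elements being automatic since $E$ has no isolated vertices). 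Thus it suffices to biject the maximal elements of $\mathfrak F$ with those of $\widetilde{\mathcal F}$.

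Next I would introduce the two maps induced by $\pi$ at the level of these subset descriptions, using the canonical decomposition $g=g_0w$ of Notation \ref{not:prefix-dead-ends}:
\[
\Psi\colon\mathfrak F\to\widetilde{\mathcal F},\qquad \Psi(Z)=\{g\in\FC: g_0\in Z\},
\]
\[
\Phi\colon\widetilde{\mathcal F}\to\mathfrak F,\qquad \Phi(Z')=\{g\in\FC: g\le_p g'\text{ for some }g'\in Z'\text{ not ending in }E^{-1}\}.
\]
Both are visibly order-preserving for inclusion. That $\Phi(Z')\in\mathfrak F$ is immediate: it is a lower $C$-compatible subset (being contained in $Z'$) and each of its elements extends, inside $\Phi(Z')$, to its own non-$E^{-1}$-ending witness. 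The only point needing work is that $\Psi(Z)$ is again $C$-compatible; this I would verify via Lemma \ref{lem:properties-of-configs}(3), since the local configuration of $\Psi(Z)$ at any $g$ is the forward part inherited from $Z$ (which carries at most one edge per $C$-class, as $Z$ is $C$-compatible) together with \emph{all} the backward edges, and by Lemma \ref{lem:properties-of-configs}(1)--(2) such a set is contained in a maximal configuration, hence admissible. Equivalently, $\Psi$ is just the pullback $\xi\mapsto\pi^{-1}(\xi)$ of filters along the surjection $\pi$ (this is where the expansion relation \eqref{eq:EXP1} enters, via $(g^\downarrow)_0=(g_0)^\downarrow$), so it lands in $\widetilde{\mathcal F}$ automatically.

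Finally I would verify the two identities $\Phi\circ\Psi=\id_{\mathfrak F}$ and $\Psi\circ\Phi=\id$ on $\mathfrak U'$, after which everything is formal. The first uses only that each $Z\in\mathfrak F$ is a lower set whose elements extend to non-$E^{-1}$-ending ones, together with $h_0\le_p h$ and Lemma \ref{lem:maxelements}; the second uses the maximality of the configurations of $Z'\in\mathfrak U'$ to reconstruct, by induction along backward edges (all of which are present in a maximal configuration), every $h\in Z'$ from its forward truncation $h_0\in\Phi(Z')$. Granting these, the transfer of maximality is purely formal: if $Z\in\mathfrak U$ and $Z''\supseteq\Psi(Z)$ in $\widetilde{\mathcal F}$, then $Z=\Phi(\Psi(Z))\subseteq\Phi(Z'')\in\mathfrak F$ forces $\Phi(Z'')=Z$ by maximality of $Z$, whence every $h\in Z''$ satisfies $h_0\in\Phi(Z'')=Z$ and so $h\in\Psi(Z)$; thus $\Psi(Z)=Z''$ and $\Psi(Z)\in\mathfrak U'$, and symmetrically $\Phi(Z')\in\mathfrak U$ for $Z'\in\mathfrak U'$. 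The two identities then exhibit $\Phi\colon\mathfrak U'\to\mathfrak U$ and $\Psi\colon\mathfrak U\to\mathfrak U'$ as mutually inverse bijections. The hard part is precisely the $C$-compatibility of $\Psi(Z)$ and the backward-reconstruction step in $\Psi\Phi=\id$; both rest on careful bookkeeping of the decomposition $g=g_0w$ and the expansion relation \eqref{eq:EXP1}, while the remaining maximality juggling is routine retraction formalism.
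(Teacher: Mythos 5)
Your proposal is correct, and the underlying maps are in fact exactly the paper's: your $\Phi$ coincides with the paper's $\varphi(Z')=Z'\setminus S$ (an element of a lower set survives precisely when it is a prefix of some element not ending in $E^{-1}$), and your $\Psi$ coincides with the paper's $\psi(Z)$ (for $h=h_0w$ one has $h\in\psi(Z)$ precisely when $h_0\in Z$). What is genuinely different is where the work is placed. The paper proves, by two direct combinatorial contradiction arguments, that $\varphi(Z')$ is maximal in $\mathfrak F$ and that every local configuration of $\psi(Z)$ is maximal, and then declares the two maps mutually inverse without proof; you instead make the substantive steps the identification of $\mathfrak U'$ with the maximal elements of $\widetilde{\mathcal F}$ (i.e.\ the ultrafilters of $\Y$, with $\Psi$ the pullback of filters along $\pi\colon\Y\onto\YY$), the $C$-compatibility of $\Psi(Z)$, and the two identities $\Phi\circ\Psi=\id_{\mathfrak F}$ and $\Psi\circ\Phi=\id_{\mathfrak U'}$, after which maximality transfers by pure order theory. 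I checked your retraction argument and it is sound; the debts you leave are real but routine: you argue only one direction of ``$Z'$ is maximal in $\widetilde{\mathcal F}$ iff all $Z'_g$ are maximal'' (the converse follows by taking the first new letter of an element of a proper enlargement and contradicting maximality of the local configuration there, via Lemma \ref{lem:properties-of-configs}(3)), and the admissibility of $\Psi(Z)_g$ needs the separate easy case $g\in\Psi(Z)\setminus Z$, where the forward part of the configuration is only the tail rather than anything ``inherited from $Z$''. Your route is shorter and more conceptual, and it explains why the statement is true: the bijection is the restriction to maximal elements of a section-retraction pair between the filter spaces of $\Y$ and $\YY$. What the paper's hands-on route buys is reusability: its explicit maximality arguments are precisely what get extended, nearly verbatim, in the proof of Theorem \ref{thm:charac-tightfilters} on tight filters, where your formal transfer is unavailable because tight filters need not be maximal.
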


\begin{proof}
	Take $Z'\in \mathfrak U'$, and let $v\in E^0$ be the unique vertex such that $Z'$ is a non-trivial lower subset of $\FC (v)$. By definition, for each $g\in Z'$, the set $Z'_g$ is a maximal local configuration. Since $v\in Z'$ and $E$ does not have isolated vertices, there exists $y\in E^1\cup E^{-1}$ such that $y\in Z'_v$, because $Z'_v$ is a maximal local configuration.  Let $S$ be the set of all elements $g\in Z'$ such that $g$ ends in $E^{-1}$ and $g$ cannot be extended to an element $z\in Z'$ such that $z$ ends in $E^1$. 
	
	Then the bijection $\varphi \colon \mathfrak U '\to \mathfrak U$ sends $Z'$ to $Z:= Z'\setminus S$, i.e. $\varphi (Z')= Z'\setminus S$. Let us check that $Z\in \mathfrak F (v)$. It is clear that $Z$ is a lower subset of $\mathfrak F (v)$, and $Z$ is $C$-compatible by Lemma \ref{lem:properties-of-configs}(3). We need to show that every element $g\in Z$ can be extended to an element $z\in Z$ such that $z$ does not end in $E^{-1}$. Take $g\in Z$. This is obvious if $g$ does not end in $E^{-1}$, so assume that $g$ ends in $E^{-1}$. Since $g\notin S$, we see from the definition of $S$ that $g$ can be extended to $z\in Z'$ such that $z$ does not end in $E^{-1}$. In particular $z\in Z$, so that we have shown the desired property of $Z$. 
	
	We finally show that $Z$ is maximal in $\mathfrak F$, equivalently in $\mathfrak F (v)$. By way of contradiction suppose that $Z\subsetneqq \tilde Z$ for some $\tilde Z\in \mathfrak F (v)$. Take $g\in \tilde Z\setminus Z$, and let $h$ be the largest prefix of $g$ such that $h\in Z$, so that $hy \in \tilde Z\setminus Z$ for some $y\in E^1\cup E^{-1}$.  Suppose first that $y\in E^1$. Then $y\in X$ for a unique $X\in C_{r(h)}=C_{s(y)}$. On the other hand, $h\in Z'$ and thus $Z'_h$ is a maximal local configuration. It follows that there is a unique $x\in X\cap Z'_h$, so that $hx\in Z'$. Hence $hx\in Z$, because it belongs to $Z'$ and ends in $E^1$. (Observe that $h$ cannot end in $x^{-1}$, because $hy$ is a reduced word that belongs to $\tilde{Z}$, and so it belongs to $\FC (v)$; hence $hx$ is reduced.) If $x= y$ then
	$hy \in Z$ which is not the case, hence $x\ne y$. But then, since $Z\subseteq \tilde Z$, we have $hx\in \tilde Z$.
	Since also $hy\in \tilde Z$, we obtain a contradiction, because $\tilde Z$ is $C$-compatible, and $hx$ and $hy$ are $C$-incompatible. So in any case we get a contradiction, and thus $y\in E^{-1}$. Write $y=e_1^{-1}$, where $e_1\in E^1$. We have $he_1^{-1}\in \tilde Z$. Moreover since $Z'_h$ is a maximal local configuration, $e_1^{-1} \in Z'_h$, which implies that $he_1^{-1} \in Z'$. Now since $\tilde Z\in \mathfrak F (v)$, we can extend $he_1^{-1}$ to a reduced word ending in $E^1$, so that there are $e_2,\dots ,e_n,d\in E^1$ such that 
    $$he_1^{-1}e_2^{-1} \cdots e_n^{-1} d\in \tilde Z.$$
	  Using the same argument as before, we see inductively that all the words $he_1^{-1}\cdots e_i^{-1}$, for $i=1,\dots , n$, belong to $Z'$. Let $h'= he_1^{-1} \cdots e_n^{-1}$. Observe that $d\in Y$ for some $Y \in C_{r(h')}$. Since $Z'_{h'}$ is a maximal local configuration, there is a unique $c\in Y$ such that $c\in Z'_{h'}$. But then 
	  $$he_1^{-1}\cdots e_n^{-1} c = h' c\in Z' .$$
	  This implies that $he_1^{-1} \in Z'\setminus S=Z$, which is a contradiction. 
	  
	  We conclude that $Z$ is maximal in $\mathfrak F (v)$, and thus $Z \in \mathfrak U$, as desired. 
	  
	We now define the inverse map $\psi \colon \mathfrak U \to \mathfrak U'$.
     For $Z\in \mathfrak U$, we set
$$\psi (Z) = Z\bigsqcup \{ gx_1^{-1}\cdots x_n^{-1} \in \FC : g\in Z, n\ge 1, x_i\in E^1, \text{ and } gx_1^{-1} \notin Z \}.$$
One can easily show that $\psi (Z)$ is a non-trivial lower subset of $\FC (v)$, where $v$
is the vertex such that $Z\in \mathfrak F (v)$. Let us check that $Z'_h$ is a maximal local configuration for all $h\in Z'\setminus Z$, where $Z':= \psi (Z)$. 
Write $h=gx_1^{-1} \cdots x_n^{-1}$, where $g\in Z$, $n\ge 1$, $x_i\in E^1$ and $gx_1^{-1}\notin Z$. Observe that $x_n\in Z'_h$. Let $X\in C_{r(h)}$ be such that $x_n\in X$.
If there exists $Y\in C_{r(h)}$ such that $Y\ne X$, then choosing $y\in Y$ we have that $Z\cup \{hy\}^{\downarrow}\in \mathfrak F$ and $Z\subsetneq Z\cup \{hy\}^{\downarrow}$, contradicting the maximality of $Z$ in $\mathfrak F$. Hence we obtain that $C_{r(h)} = \{X\}$, and $x_n\in X\cap Z'_h$. Moreover for each $x\in r^{-1}(r(h))$, we have
$$hx^{-1} = gx_1^{-1} \cdots x_n^{-1} x^{-1} \in Z',$$
hence $x^{-1} \in Z'_{h}$. It follows that $Z'_{h}= \{ x_n\} \cup \{ x^{-1} : x\in r^{-1}(r(h))\}$ is a maximal local configuration. Similarly $Z'_g$ is a maximal local configuration for all $g\in Z$.
This shows that $Z'=\psi (Z)\in \mathfrak U'$. It is easy to show that $\varphi$ and $\psi$ are mutually inverse maps.  
        	\end{proof}

     \begin{remark}
         \label{rem:maximal-elements-of-ultrafilters}
 Note that for any $g\in \max (Z)$, where $Z\in \mathfrak U$, the last edge of $g$ must belong to $E^1$ (if $g$ is non-trivial), so that $r(g)$ is a sink in $E$, but the condition that $g\in \max (Z)$ is much stronger than $r(g)$ being merely a sink of $E$, because it implies that $g$ cannot be {\it properly} extended to any element $h\in \FC$ such that $h$ ends in $E^1$.       
              \end{remark}

Before we continue with our discussion, we need to analyze the topology on the space of filters $\mathfrak F$. To this end we recall that there is a basis of open subsets $\{ \mathcal U (F,G)\}$ for the topology of $\mathfrak F$, where $F,G$ are finite subsets of $\YY$ and 
$$\mathcal U (F,G) = \{ Z\in \mathfrak  F \mid I_0\subseteq Z, J_0\nsubseteq Z \,\, \forall \,  [I]\in F, \,\, \forall \, [J] \in G \},$$
see e.g. \cite{Exel:Inverse_combinatorial}. 

Note that when $F\ne \emptyset $ and $\mathcal U (F,G)\ne \emptyset$, we can replace $\mathcal U (F,G)$ by
$\mathcal U (\wedge F, G)$, so that we can assume that $F$ is a singleton. 

We are going to prove that certain elements of this basis form a basis of open compact subsets of $\mathfrak F$.   This basis is similar to the one described by Webster in \cite{webster}, although the technical details are harder in our situation. 

Recall that the sets $\Y_0(v)$, for $v\in E^0$, and $\Y_0$ have been introduced in Notation \ref{notati:Y-sub-0}.

 By the previous observation, we see that the family $\mathcal U (\{I\}, G)$, where $I\in \Y_0$ and $G$ is a finite subset of $\Y_0$, is a basis for the topology of $\mathfrak F$. 

\begin{notation}
	For an element $g= x_1\cdots x_n \in \F$ of length $n\ge 1$, with $x_j\in E^1\cup E^{-1}$, and $1\le i \le n$, we denote by $[g]_i$ the prefix of $g$ of length $i$, that is, $[g]_i := x_1\cdots x_i\in \F$.
\end{notation}

\begin{notation}
	\label{notati:opencompactbasis}
	\begin{enumerate}
		\item For $v\in E^0$, write
	$$\mathcal N (v) := \{ x_1^{-1}x_2^{-1}\cdots x_n^{-1}y \in \FC (v) : n\ge 0, x_1,\dots , x_n,y\in E^1\}.$$
	\item 	For each $I\in \Y_0$, write  
	$$\mathcal N (I) : = \{gg' \in \FC : g\in I,\,  g' \in \mathcal N (r(g)),\,  g[g']_1\notin I, \, \text{ and }  I\cup \{gg'\}^{\downarrow}\in \Y_0 \}.$$
	\item For each $I\in \Y_0$ and each finite subset $F$ of 
	$\mathcal N (I)$ we set
	$$\mathcal Z (I\setminus F) := \mathcal U (\{I\}, \{I\cup \{ gg' \}^{\downarrow}  : gg'\in F \}),$$
	and $\mathcal Z (I):= \mathcal Z (I\setminus \emptyset) =\mathcal U (\{I\}, \emptyset)$.  
\end{enumerate}
	Note that the condition  $I\cup \{gg'\}^{\downarrow}\in \Y_0$ in (2) requires the $C$-compatibility of $I\cup \{gg'\}^{\downarrow}$, which is not automatic from the rest of the conditions. Observe also that for $I = \{v\}\in \Y_0(v)$, we have $\mathcal N (I)= \mathcal N (v)$, so the notations introduced in (1) and (2) are coherent.
   \end{notation}

 \begin{lemma}
	\label{lem:opencompactbasis}
	With the above notation, the family of sets $\mathcal Z (I\setminus F)$, where $I$ ranges on $\Y _0$ and $F$ ranges on all finite subsets of $\mathcal N (I)$,
	is a basis of open compact subsets of $\mathfrak F$. Each open compact subset of $\mathfrak F$ is a finite disjoint union of sets $\mathcal Z (I\setminus F)$.  
\end{lemma}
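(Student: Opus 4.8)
The plan is to prove the three assertions---(i) each $\mathcal Z(I\setminus F)$ is compact open, (ii) these sets form a basis, (iii) every compact open set is a finite disjoint union of them---in turn. Using the identification of filters with elements of $\mathfrak F$ from Proposition~\ref{prop:characterizing-filt-ultra-tight}, and the fact that each member of $\mathcal N(I)$ makes $I\cup\{gg'\}^\downarrow$ lie in $\Y_0$, one checks directly that
$$\mathcal Z(I\setminus F)=\{Z\in\mathfrak F: I\subseteq Z,\ gg'\notin Z\text{ for all }gg'\in F\}.$$
For (i), the set $\mathcal Z(I)=\{Z:I\subseteq Z\}$ is clopen and compact, being the set of characters with $\chi([I])=1$, a closed subset of $\{0,1\}^{\YY}$. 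Since $\mathcal Z(I\setminus F)=\mathcal Z(I)\setminus\bigcup_{gg'\in F}\mathcal Z(I\cup\{gg'\}^\downarrow)$ removes finitely many open subsets, it is closed in the compact set $\mathcal Z(I)$, hence compact open.

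For (ii), I start from the known basis consisting of the sets $\mathcal U(\{I\},G)$ with $I\in\Y_0$ and $G$ a finite subset of $\Y_0$. Given $Z\in\mathcal U(\{I\},\{J_1,\dots,J_k\})$, I build $I'\in\Y_0$ and a finite $F\subseteq\mathcal N(I')$ with $Z\in\mathcal Z(I'\setminus F)\subseteq\mathcal U(\{I\},\{J_1,\dots,J_k\})$. Replacing each $J_l$ by $(I\cup J_l)_0$, and discarding those for which $I\cup J_l$ is $C$-incompatible (their exclusion being automatic once $I\subseteq Z$), Lemma~\ref{lem:maxelements} lets me assume $I\subseteq J_l$ for every $l$. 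For each $l$ choose $h_l\in\max(J_l)\setminus Z$; as $J_l\in\Y_0$ this $h_l$ ends in $E^1$, and since $h_l\in J_l$ it suffices to force $h_l\notin W$ in order to guarantee $J_l\nsubseteq W$. Let $p_l$ be the longest prefix of $h_l$ with $p_l\in Z$ and write $h_l=p_lz_1\cdots z_m$, so that $Z$ and $h_l$ part company exactly at $p_l$. If the local configuration of $Z$ at $p_l$ contains an edge $C$-incompatible with $z_1$, I put the corresponding $Z$-branch into $I'$; then $C$-compatibility of any $W\supseteq I'$ already forbids $h_l\in W$. Otherwise I let $z_j$ be the first letter in $E^1$ among $z_1,\dots,z_m$ and take the witness $gg'=p_lz_1\cdots z_j$, with $g=p_l$ and $g'=z_1\cdots z_j\in\mathcal N(r(p_l))$. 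Taking $I'$ to be a finite lower $C$-compatible subset of $Z$ in $\Y_0$ that contains $I$, all first-case branches and all the $p_l$ (extending $E^{-1}$-tails forward inside $Z$ so that $I'\in\Y_0$), and letting $F$ collect the second-case witnesses, one has $Z\in\mathcal Z(I'\setminus F)$ because each witness extends $p_lz_1\notin Z$, while $\mathcal Z(I'\setminus F)\subseteq\mathcal U(\{I\},\{J_l\})$ because every $W$ in it contains $I$ and, for each $l$, misses $h_l$ through one of the two mechanisms.

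The main obstacle is the verification, in the second case, that the candidate witness really lies in $\mathcal N(I')$, i.e.\ that $I'\cup\{gg'\}^\downarrow$ is $C$-compatible (the conditions $g=p_l\in I'$, $g[g']_1=p_lz_1\notin I'$ and $gg'$ ending in $E^1$ being immediate). By Remark~\ref{rem:onCcompatibility}, the $C$-compatibility of $gg'$ with an element $a\in I'\subseteq Z$ is decided at the unique meet $q$ of $a$ and $gg'$ in the tree $\Gamma_E$; because $gg'\le_p h_l$ and $p_l$ is the longest prefix of $h_l$ in $Z$, this meet satisfies $q\le_p p_l$. The two outgoing directions at $q$ either both belong to $Z$---so the local configuration of $Z$ at $q$ is admissible by Lemma~\ref{lem:properties-of-configs}(3) and no $C$-violation occurs---or they consist of $z_1$ against a $Z$-direction at $p_l$, which are $C$-compatible exactly by the hypothesis of the second case. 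This reduces the global $C$-compatibility of $I'\cup\{gg'\}^\downarrow$ to the local configuration analysis, and is the delicate point of the argument.

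For (iii), I would first record two facts: the family $\{\mathcal Z(I):I\in\Y_0\}$ is closed under finite intersections, since $\mathcal Z(I_1)\cap\mathcal Z(I_2)=\mathcal Z((I_1\cup I_2)_0)$ when $I_1\cup I_2$ is $C$-compatible and is empty otherwise; and the splitting identity
$$\mathcal Z(I)=\mathcal Z(I\setminus\{gg'\})\sqcup\mathcal Z(I\cup\{gg'\}^\downarrow),\qquad gg'\in\mathcal N(I),$$
holds by partitioning according to whether $gg'\in W$. Using these, an induction on the number of edges in $J\setminus I$ shows that for $I\subseteq J$ in $\Y_0$ the difference $\mathcal Z(I)\setminus\mathcal Z(J)$ is a finite disjoint union of sets $\mathcal Z(I'\setminus F)$: one peels off, via the splitting identity, a branch $gg'\in\mathcal N(I)$ with $gg'\in J$ (the first forward run of a maximal element of $J$ not in $I$), leaving $\mathcal Z(I\setminus\{gg'\})$, which is disjoint from $\mathcal Z(J)$, together with $\mathcal Z(I\cup\{gg'\}^\downarrow)\setminus\mathcal Z(J)$, to which the inductive hypothesis applies. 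The same bookkeeping extends to differences $\mathcal Z(I)\setminus\bigcup_m\mathcal Z(J_m)$ with $I\subseteq J_m$, and hence shows that the finite disjoint unions of sets $\mathcal Z(I'\setminus F)$ are closed under intersection and relative complement, i.e.\ form a ring of sets. As this ring contains the basis from (ii) and consists of compact open sets, every compact open subset of $\mathfrak F$---being a finite union of basis elements by compactness---belongs to it, and is therefore a finite disjoint union of sets $\mathcal Z(I'\setminus F)$, as claimed.
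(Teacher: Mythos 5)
Your proposal is correct, and it splits into a part that mirrors the paper and a part that genuinely does not. For compactness and for the basis property you follow essentially the paper's own argument: $\mathcal Z(I)$ is compact because it corresponds to a closed set of characters inside $\{0,1\}^{\YY}$, and given $Z\in\mathcal U(\{I\},G)$ you refine by the same two-case analysis at the point where a maximal element $h_l$ of $J_l\in G$ leaves $Z$ --- either a direction inside $Z$ incompatible with $z_1$ is adjoined to $I'$ (the paper's family $A$), or a witness is placed in $F$ (the paper's family $B$). Your only deviation there is anchoring the witness at the longest prefix $p_l$ of $h_l$ lying in $Z$, which may end in $E^{-1}$, where the paper uses the longest such prefix \emph{not} ending in $E^{-1}$; both choices yield elements of $\mathcal N(I')$, since membership in $\mathcal N(I')$ is decided by the decomposition at the longest prefix inside $I'$, and your meet-point verification of the $C$-compatibility of $I'\cup\{gg'\}^{\downarrow}$ --- correctly flagged as the delicate step --- is sound by Remark \ref{rem:onCcompatibility} and Lemma \ref{lem:properties-of-configs}(3). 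For the disjoint-union statement you depart from the paper: there, the hypotheses of \cite{ABPS}*{Lemma~3.9} are verified, and the relative-complement property is established by an explicit two-family decomposition of $B_1\setminus B_2$ indexed by the tuples in $\mathfrak N$ (recording how far a filter penetrates $\max(I_2)\setminus I_1$) and the subsets $H\in\mathfrak H$ of $F_2\setminus F_1$, with both inclusions checked by hand; this occupies most of the paper's proof. You instead isolate the splitting identity $\mathcal Z(I)=\mathcal Z(I\setminus\{gg'\})\sqcup\mathcal Z(I\cup\{gg'\}^{\downarrow})$ and peel branches by induction on $|J\setminus I|$, then reprove the abstract ``ring of sets'' fact rather than cite it. This is shorter and makes disjointness automatic at each split, but it relocates work into bookkeeping you only sketch: reducing a general difference $\mathcal Z(I_1\setminus F_1)\setminus\mathcal Z(I_2\setminus F_2)$ to differences $\mathcal Z(I)\setminus\bigcup_m\mathcal Z(J_m)$ with $I\subseteq J_m$ (replace $\mathcal Z(J_m)$ by $\mathcal Z(I\cup J_m)$, or drop it when $I\cup J_m$ is $C$-incompatible), and checking that witnesses survive the enlargements of $I$ produced by the peeling (they do: the run past the longest prefix in the enlarged set is still a string of inverse edges followed by a single edge, so membership in the relevant $\mathcal N$ persists). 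Both routes are complete; the paper's buys an explicit formula for the decomposition, yours buys brevity and a reusable induction.
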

\begin{proof} Consider a non-empty set $\mathcal U (\{I\}, G)$, where $I\in \Y_0(v)$ and $G$ is a finite subset of $\Y_0(v)$ for some $v\in E^0$. We need to show that for each $Z\in \mathcal U (\{I\}, G)$ there exists a set of the form $\mathcal Z ( J\setminus F)$ such that $Z\in \mathcal Z ( J\setminus F)$ and $\mathcal Z ( J\setminus F)\subseteq \mathcal U (\{I\}, G)$. 
	
	Take $Z\in \mathcal U (\{I\},G)$.
	 
	Let $A$ be the family of those $h\in \max (L)$, for some $L\in G$, such that $h$ is $C$-incompatible with some $g_h\in Z$. Then we can write 
	$$g_h=p_hx_hr_h \in Z, \qquad h= p_hy_hq_h,$$
	where $x_h,y_h\in X_h\in C$, $x_h\ne y_h$. Observe that $p_hx_h\in Z$, and it ends in $E^1$. Moreover, $h\notin Z$, because $Z$ is $C$-compatible. . 
	
	Let $B$ be the family of those $h\in \max (L)$, for some $L\in G$, such that $h\notin Z$ and $h$ is $C$-compatible with all the elements of $Z$. Let $z_h$ be the greatest prefix of $h$ such that $z_h\in Z$ and $z_h$ does not end in $E^{-1}$. Since $h\notin Z$ and ends in $E^1$, there exists $g'_h\in \mathcal N (r(z_h))$ such that $z_hg'_h\le _p h$. Note that $z_hg'_h\in \FC (v)$ because $h\in \FC(v)$. Note also that $z_hg'_h\notin Z$ by maximality of $z_h$. 
	
	Now set 
	$$J= I \cup (\bigcup_{h\in A} \{p_hx_h\}^{\downarrow})\cup (\bigcup_{h\in B} \{z_h\}^{\downarrow}) \quad \text{ and } \quad F= \{ z_hg'_h : h\in B\}.$$ 
 Observe that $J$ is a finite lower set and that $J\subseteq Z$ because $Z$ is a lower subset of $\FC (v)$. Since $Z$ is $C$-compatible, so is $J$ and so by construction $J\in \Y _0(v)$.  Moreover $F$ is an eligible set for $J$, so that we may consider the  set
	 $\mathcal Z (J\setminus F)$. 
	 
	 We have to check that $Z\in \mathcal Z (J\setminus F)$, and that $\mathcal Z (J\setminus F)\subseteq  \mathcal U (\{I\}, Y)$. We have already observed that $J\subseteq Z$. Also $z_hg'_h\notin Z$ for all $h\in B$ by construction. Hence we obtain $Z\in \mathcal Z (J\setminus F)$. Let $Z'\in \mathcal Z (J\setminus F)$. Then $J\subseteq Z'$ and so $I\subseteq Z'$. Let $L\in G$, and suppose first that there is $h\in \max (L)$ such that $h$ is $C$-incompatible with $g_h\in Z$, that is, $h\in A$. Then we have $p_hx_h \in J\subseteq Z'$ and therefore $p_hy_h\notin Z'$, because $Z'$ is $C$-compatible, and consequently $h\notin Z'$. This shows that $L\nsubseteq Z'$. Finally suppose that  all elements of $\max (L)$ are $C$-compatible with $Z$. Since $Z\in \mathcal U (\{I\}, G)$, there exists $h\in \max (L)$ such that $h\notin Z$. Hence $h\in B$, and we may consider the element $z_hg'_h\in F$.  Since $Z'\in \mathcal Z (J\setminus F)$, it follows that $z_hg'_h \notin Z'$ and thus $h\notin Z'$, because $z_hg_h'\le_p h$. It follows that $L\nsubseteq Z'$. We conclude that $Z'\in \mathcal U (\{I\}, G)$, as desired.   
	
The fact that $\mathcal Z (I)$ is compact for $I\in \Y _0$ follows from the general theory. In fact, the space of filters $\mathfrak F$ can be identified with the space of characters on the semilattice $\YY$, through the consideration of the characteristic functions of the filters, see 
\cite{Exel:Inverse_combinatorial}*{Section~12}. Then the set $\mathcal Z (I)$ corresponds to the set of characters $\chi$ such that $\chi (I) = 1$, and this set is closed in the compact space $\{0,1\}^{\YY}$, and thus it is compact, see \cite{Exel:Inverse_combinatorial}*{Definition~10.2} and the comments after it. Consequently all the sets $\mathcal Z (I\setminus F)$ are open and compact. 

By \cite{ABPS}*{Lemma~3.9}, to prove that every open compact subset of $\mathfrak F$ can be written as a disjoint union of sets in the basis $\mathcal B := \{ \mathcal Z (I\setminus F) \}_{I,F} $, it suffices to check three properties:
\begin{enumerate}
	\item Every set $B\in \mathcal B$ is open and compact.
	\item $\mathcal B$ is closed under finite intersections.
	\item For $B_1,B_2\in \mathcal B$, the set $B_1\setminus B_2$ is a finite disjoint union of sets from $\mathcal B$. 
\end{enumerate}
      
We have already shown (1). To show (2), take $B_1= \mathcal Z (I_1\setminus F_1)$ and $B_2= \mathcal Z (I_2\setminus F_2)$, where $F_i$ is a finite subset of $\mathcal N (I_i)$, for $i=1,2$. We may assume that $B_1 \cap B_2 \neq \emptyset$. If $Z\in \mathfrak F$ and $Z	\in B_1\cap B_2$ then we have $I_1\cup I_2 \subseteq Z$ and consequently $I_1\cup I_2$ is $C$-compatible. It is then clear that $I_1\cup I_2 \in \Y _0$, because $\max (I_1\cup I_2)\subseteq \max (I_1) \cup \max (I_2)$. If $y\in F_1\cap I_2$ then $y\in I_2\subseteq Z$ and $y\notin Z$ because $y\in F_1$, leading to a contradiction. Hence $F_1\cap I_2 =\emptyset$ and similarly $F_2\cap I_1 =\emptyset$.
It follows that $(F_1\cup F_2) \cap (I_1\cup I_2) = \emptyset$ and we readily see that 
$$B_1\cap B_2 = \mathcal Z (I_1\setminus F_1) \cap \mathcal Z (I_2\setminus F_2) = \mathcal Z (I_1\cup I_2\setminus (F_1\cup F_2))\in \mathcal B, $$
as desired.  

Finally we show (3). Take $B_i=\mathcal Z (I_i\setminus F_i)$ as before. We may assume that 
$B_1\setminus B_2 \neq \emptyset$. We may also assume that $B_1\cap B_2 \neq \emptyset$, because if $B_1\cap B_2 = \emptyset$, then $B_1\setminus B_2 = B_1$ and we are done. As in the proof of (2), this gives that $I_1\cup I_2$ is necessarily $C$-compatible  and that $F_1\cap I_2 = F_2\cap I_1 = \emptyset$. 

Suppose first that $I_2\nsubseteq I_1$, and consider the set
$$\mathfrak M  := \max (I_2) \setminus I_1.$$
We can write each $h\in \mathfrak M $ in the form $h= g_0^hg_1^h\cdots g^h_{n(h)}$, where $g^h_0\in I_1$, $g^h_{i+1} \in  \mathcal N (r(g^h_i))$ for all $i=0,\dots , n(h)-1$, and $g^h_0[g^h_1]_1 \notin I_1$.
If $g_0^hg_1^h\in F_1$ for some $h\in \mathfrak M$ then $\mathcal Z (I_1\setminus F_1)\cap \mathcal Z (I_2) = \emptyset$ and hence $B_1\setminus B_2 = \mathcal Z (I_1\setminus F_1)$ as desired. Hence we may assume that
$F_1\cap \{ g^h_0g_h^1: h\in \mathfrak M \} = \emptyset$, which is indeed equivalent to $F_1\cap I_2 = \emptyset$. 

Denoting by $[i,j]$ the set $\{ i,i+1 ,\dots ,j\}$ for integers $i\le j$, we consider the set
$$\mathfrak N = \{ (i(h))_{h\in \mathfrak M} \in \Big( \prod _{h\in \mathfrak M} [0,n(h)]\Big) \setminus \{(n(h))_{h\in \mathfrak M}  \}  \}.$$      
For each $\mathfrak n = (i(h))_{h\in \mathfrak M}\in \mathfrak N$, we consider 
$$I_{\mathfrak n} = I_1 \cup \bigcup_{h\in \mathfrak M} \{g_0\cdots g_{i(h)}\}^{\downarrow}\in \mathcal X_0$$
and 
$$F_{\mathfrak n} = (F_1\cap \mathcal N (I_{\mathfrak n})) \sqcup \{ g^h_0\cdots g^h_{i(h)}g^h_{i(h)+1}: i(h) <n(h), h\in \mathfrak M\}\subseteq \mathcal N (I_{\mathfrak n}),$$
and the corresponding open compact sets $\mathcal Z (I_{\mathfrak n}\setminus F_{\mathfrak n})$.

In case $I_2\subseteq I_1$ we set $\mathfrak N = \emptyset$. 

Independently of whether $I_2\subseteq I_1$ or not, we need to consider some additional basic open compact subsets.  
Set 
$$\mathfrak H :=  \{ H\subseteq  F_2\setminus F_1 : H\ne \emptyset \text{ and } I_1\cup I_2 \cup H^{\downarrow} \in \Y _0  \},$$
and for each $H\in \mathfrak H$
$$F(H) = \mathcal N (I_1\cup I_2 \cup H^{\downarrow}) \cap (F_1\cup F_2).$$

 We claim that $$B_1\setminus B_2 = \mathcal Z (I_1\setminus F_1)\setminus \mathcal Z (I_2\setminus F_2) = \bigsqcup _{\mathfrak n \in \mathfrak N}\mathcal Z (I_{\mathfrak n}\setminus F_{\mathfrak n})\sqcup \bigsqcup _{H\in \mathfrak H} \mathcal Z (I_1\cup I_2\cup H^{\downarrow}\setminus F(H)).$$
Note that the union in the above formula is a disjoint union by construction.

 We prove first that $ \mathcal Z (I_{\mathfrak n}\setminus F_{\mathfrak n})\subseteq B_1\setminus B_2$ for each $\mathfrak n \in \mathfrak N$.  
Take $Z\in \mathcal Z (I_{\mathfrak n}\setminus F_{\mathfrak n})$. Then $I_1\subseteq I_{\mathfrak n}\subseteq Z$ and $Z\cap F_{\mathfrak n} = \emptyset$. So we get $I_1\subseteq Z$. If $z\in Z\cap F_1$, then $I_{\mathfrak n} \cup \{ z \}^{\downarrow}\subseteq Z$, 
so $I_{\mathfrak n} \cup \{z\}^{\downarrow}$ is $C$-compatible because $Z$ is so. Therefore $z\in Z\cap (F_1\cap \mathcal N (I_n))= \emptyset$, which is a contradiction. Therefore $Z\cap F_1= \emptyset$, proving that $Z\in B_1$. Moreover since $h\notin Z$ for at least one $h\in \max (I_2)$, we get that $I_2\nsubseteq Z$ and hence $Z\notin B_2$. Hence $Z\in B_1\setminus B_2$. Now we check that $\mathcal Z (I_1\cup I_2\cup H^{\downarrow}\setminus F(H)) \subseteq B_1\setminus B_2$ for $H\in \mathfrak H$. Take 
$Z\in \mathcal Z (I_1\cup I_2\cup H^{\downarrow}\setminus F(H))$. 
Then $I_1\cup I_2\cup H^{\downarrow}\subseteq Z$ and so $I_2\subseteq Z$ and $H\subseteq  Z$, 
with $\emptyset \ne H \subseteq  F_2$, so that $Z\notin \mathcal Z (I_2\setminus F_2)$. If $h'\in Z\cap F_1$, then $I_1\cup I_2\cup H^{\downarrow} \cup \{h'\}^{\downarrow} \subseteq Z$ and since $Z$ is $C$-compatible, it follows that $I_1\cup I_2\cup H^{\downarrow} \cup \{h'\}^{\downarrow} \in \Y_0$. We claim that $h'\notin I_1\cup I_2 \cup H^{\downarrow}$. Indeed we know that $h'\notin I_1\cup I_2 $ because $F_1\cap (I_1\cup I_2) =  \emptyset$ using our assumptions at the beginning of the proof of (3). Hence $h'\in \mathcal N (I_1\cup I_2)$. If $h'\in H^{\downarrow}$, then $h'$ is an initial segment of an element of $H$, but since the elements of $H$ are also contained in $\mathcal N (I_1\cup I_2)$, it follows that $h'\in H$. But then $h'$ belongs to $F_2\setminus F_1$ by definition of $H$, which contradicts the fact that $h'\in F_1$. We have shown that $h'\notin I_1\cup I_2 \cup H^{\downarrow}$ and therefore $h'\in \mathcal N (I_1\cup I_2 \cup H^{\downarrow})\cap F_1$, which implies that $h'\in F(H)$ by definition of $F(H)$. But, since $h'\in Z$, this contradicts the fact that $Z\in \mathcal Z (I_1\cup I_2\cup H^{\downarrow}\setminus F(H))$. Hence $Z\cap F_1 = \emptyset$ and thus $Z\in \mathcal Z (I_1\setminus F_1)= B_1$. Therefore $Z\in B_1\setminus B_2$, as desired.  

For the reverse containment, take $Z\in B_1\setminus B_2$. Then $I_1\subseteq Z$, $Z\cap F_1 = \emptyset$ and either $I_2\nsubseteq Z$ or $I_2\subseteq Z$ but $Z\cap F_2\ne \emptyset$. If $I_2\nsubseteq Z$ then we are in the case where $I_2\nsubseteq I_1$, and hence $\mathfrak N\ne \emptyset$. For each $h\in \mathfrak M$, let $i(h)\in [0,n(h)]$ be the maximum non-negative integer such that $g_0^h\cdots g^h_{i(h)}\in Z$. Since $I_2\nsubseteq Z$, we get that $i(h)< n(h)$ for a least one $h\in \mathfrak M$, so that $\mathfrak n :=(i(h))_{h\in \mathfrak M} \in \mathfrak N$. Hence we get that $I_{\mathfrak n}\subseteq Z$ and $Z\cap F_{\mathfrak n}= \emptyset$, so that $Z\in \mathcal Z (I_{\mathfrak n}\setminus F_{\mathfrak n})$. 

Finally suppose that $I_2\subseteq Z$ but $Z\cap F_2 \ne \emptyset$. Set $H:= Z\cap F_2= Z \cap (F_2\setminus F_1)$. Then $I_1\cup I_2\cup H^{\downarrow} \subseteq Z$ and since $Z$ is $C$-compatible, it follows that $H\in \mathfrak H$. It is now clear that $Z\in \mathcal Z (I_1\cup I_2 \cup H^{\downarrow}\setminus F(H))$, concluding the proof.    
     \end{proof}

Let $\mathfrak T$ be the closure of $\mathfrak U$ inside $\mathfrak F$. By Proposition \ref{prop:characterizing-filt-ultra-tight} the space $\mathfrak T$ is homeomorphic to the space of tight filters on $\YY$.  

We now characterize the space $\mathfrak T$  of tight filters. For this we need to generalize the notion of maximal configuration. 

Given a separated graph $(E,C)$, we write $\Cfin = \bigcup_{v\in E^0} \Cfin_v$, where  $\Cfin_v$ is the set of all $X\in C_v$ such that $|X|<\infty$. 

\begin{definition}
	Let $Z$ be a local configuration at $v\in E^0$. We say that $Z$ is {\it finite-maximal} if it is admissible 
    and for each $X\in \Cfin_v$ there exists $z_X\in X$ such that 
	$$\{z_X : X\in \Cfin_v \}\bigcup \{x^{-1}: x\in r^{-1}(v)\} \subseteq Z .$$
	 \end{definition}

Recall that an admissible local configuration cannot contain more than one element in each $X\in C_v$. Hence the above definition says that it is maximal with respect to the finite sets $X\in \Cfin_v$, in the sense that the local configuration already contains one element in each of these sets. To obtain a correct characterization of the tight filters, we also need to include some special vertices, which one can think of as ``virtual maximal-finite configurations''.

\begin{definition}
	\label{source-infinite}
	Let $(E,C)$ be a separated graph. Set
	$$\SInf = \{ v\in E^0 : r^{-1}(v)= \emptyset \,\,{\mathrm{ and }} \, \, |X| = \infty \, \mbox{ for all }\, X\in C_v \}.$$
\end{definition}

Since $E$ does not contain isolated vertices, if $v\in E^0$ is a \emph{source}, that is, if $r^{-1}(v)=\emptyset$, then it cannot be a \emph{sink}, meaning $s^{-1}(v)\not=\emptyset$, and therefore $C_v$ is also not empty. In particular we must have $|s^{-1}(v)|=\infty$ whenever $v\in \SInf$; we view these vertices as \emph{infinite sources}, which explains the notation.

\begin{theorem}
	\label{thm:charac-tightfilters}
	 Let $\mathfrak T'$ be the set of all non-trivial lower subsets $Z'$ of $\FC (v)$ such that $Z'_g$ is a finite-maximal local configuration for all $g\in Z$, where $v$ ranges over all vertices of $E$. Then there is a bijection 
	 $$\mathfrak T'\cup \SInf \cong \mathfrak T$$
	 between $\mathfrak T'\cup \SInf $ and the space $\mathfrak T$ of tight filters.      
\end{theorem}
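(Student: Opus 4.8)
The plan is to produce an explicit bijection $\Phi\colon \mathfrak T'\cup \SInf\to \mathfrak T$ together with a map the other way, and to check that they are mutually inverse, throughout exploiting the identification $\mathfrak T=\overline{\mathfrak U}$ and the description of $\mathfrak U$ from Theorem~\ref{cor:ultrafilters}. For $Z'\in\mathfrak T'$ I would set $\Phi(Z')=Z'\setminus S(Z')$, where $S(Z')$ is the set of $g\in Z'$ that end in $E^{-1}$ and admit no extension inside $Z'$ ending in $E^1$; this is exactly the passage $Z'\mapsto Z'\setminus S$ used to go from $\mathfrak U'$ to $\mathfrak U$ in Theorem~\ref{cor:ultrafilters}. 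On the second summand I would set $\Phi(v)=\{v\}$, the trivial filter at $v\in\SInf$. The first task is to verify that each $\Phi(Z')$ and each $\{v\}$ genuinely lies in $\mathfrak F$ and, crucially, in the closure $\overline{\mathfrak U}=\mathfrak T$.

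For membership in $\mathfrak T$ I would argue by approximation inside the basis $\{\mathcal Z(I\setminus F)\}$ of compact open sets from Lemma~\ref{lem:opencompactbasis}. Given $Z=\Phi(Z')$ and a basic neighborhood $\mathcal Z(I\setminus F)\ni Z$, I would build an ultrafilter inside it: each local configuration $Z'_g$ is finite-maximal, and I complete it to a maximal configuration in the sense of Lemma~\ref{lem:properties-of-configs}(1) by choosing, for every \emph{infinite} class $X\in C_{r(g)}$ not yet met, a single edge $e_X\in X$. Since $I$ and $F$ are finite while each such $X$ is infinite, these choices can be arranged so that the resulting tree $Z''$ still satisfies $I\subseteq Z''$ and $F\cap Z''=\emptyset$; by Theorem~\ref{cor:ultrafilters} it yields an element of $\mathfrak U\cap\mathcal Z(I\setminus F)$, so $Z\in\overline{\mathfrak U}$. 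For $v\in\SInf$ this is the same construction starting from the empty configuration at $v$, and here the two defining conditions of $\SInf$ are exactly what is used: all classes at $v$ being infinite lets the approximating ultrafilters dodge $F$, while $r^{-1}(v)=\emptyset$ guarantees that $\{v\}$ itself, rather than a backward extension, is the filter being approximated.

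For surjectivity I would take an arbitrary $Z\in\mathfrak T=\overline{\mathfrak U}$ and show it lies in the image. The engine is that finite classes force covers: for $g\in Z$ not ending in $E^{-1}$ and a finite class $X\in\Cfin_{r(g)}$, the finitely many idempotents $[\{ge\}^\downarrow]$, $e\in X$, cover $[g^\downarrow]$ in $\YY$, so every ultrafilter containing $[g^\downarrow]$ contains some $[\{ge\}^\downarrow]$. Hence, were $Z_g\cap X=\emptyset$, the basic set $\mathcal Z(g^\downarrow\setminus\{ge:e\in X\})$ would be a neighborhood of $Z$ disjoint from $\mathfrak U$, contradicting $Z\in\overline{\mathfrak U}$; thus $Z_g$ meets every finite class at $r(g)$. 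Re-inserting, at each $g\in Z$, the backward tails $x^{-1}$ for $x\in r^{-1}(r(g))$ together with all the edges then forced by finite-maximality (exactly as the map $\psi$ of Theorem~\ref{cor:ultrafilters} reconstructs an ultrafilter from its trimmed version) produces a saturation $Z'$ with finite-maximal configurations satisfying $Z'\setminus S(Z')=Z$. If $Z$ is non-trivial this $Z'$ is non-trivial, so $Z'\in\mathfrak T'$; if $Z=\{v\}$ is trivial, then either $r^{-1}(v)\neq\emptyset$ and $Z'$ is a genuinely non-trivial backward tree in $\mathfrak T'$, or $r^{-1}(v)=\emptyset$, in which case the cover argument forces every class at $v$ to be infinite and hence $v\in\SInf$.

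It then remains to see that $\Phi$ is injective and that the two constructions invert one another; this reduces to uniqueness of the finite-maximal saturated representative, in the spirit of the uniqueness of $I_0$ in Lemma~\ref{lem:maxelements} and of the ultrafilter reconstruction in Theorem~\ref{cor:ultrafilters}, together with the disjointness of the two images: a trivial filter $\{v\}$ arises from $\SInf$ exactly when $r^{-1}(v)=\emptyset$ and from $\mathfrak T'$ exactly when $r^{-1}(v)\neq\emptyset$, so the two cases never overlap. I expect the genuinely delicate point to be this bookkeeping for trivial filters -- deciding when $\{v\}$ is tight at all and whether it is witnessed by an infinite source or by a non-trivial backward-saturated tree -- and, technically, checking throughout the surjectivity step that the sets one feeds into Lemma~\ref{lem:opencompactbasis} really satisfy its $C$-compatibility and $\Y_0$-membership hypotheses, which is where the argument is most easily derailed.
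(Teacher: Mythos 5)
Your construction is the same as the paper's: the trimming map on $\mathfrak T'$, the assignment $v\mapsto\{v\}$ on $\SInf$, and the approximation argument (completing finite-maximal configurations inside infinite classes so as to dodge the finite set $F$) for showing the image lies in $\overline{\mathfrak U}=\mathfrak T$. The genuine gap is in your surjectivity step, precisely at the backward extensions created by the saturation $Z'=\psi(Z)$. Your ``engine'' is stated only for $g\in Z$ not ending in $E^{-1}$, so it says nothing about the elements $h=gx_1^{-1}\cdots x_n^{-1}\in Z'\setminus Z$. At such an $h$ no forward edge can ever be present in $Z'$: if $y\in E^1$ and $hy\in Z'$, then $hy$ ends in $E^1$, so it survives the trimming, while $hy\notin Z$ (since $h\notin Z$ and $Z$ is a lower set); hence $\Phi(Z')\neq Z$. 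Your prescription ``re-inserting \dots together with all the edges then forced by finite-maximality'' is therefore self-defeating: if some finite class $Y\in\Cfin_{r(h)}$ other than the class $X$ of $x_n$ existed, inserting the forced edge would give $\Phi(Z')\supsetneq Z$, whereas inserting nothing leaves $Z'_h$ non-finite-maximal, so $Z'\notin\mathfrak T'$ and $Z$ would have no preimage at all under your construction.

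What has to be proved instead -- and this is where the paper's verification of $\psi$ spends its effort -- is that tightness of $Z$ \emph{forbids} such a class $Y$: with $F=\{hy: y\in Y\}$ one checks that $F\subseteq\mathcal N(g_0^{\downarrow})$ and $Z\in\mathcal Z(g_0^{\downarrow}\setminus F)$, and any ultrafilter $\widetilde Z$ in this neighbourhood yields a contradiction, because its backward saturation has a \emph{maximal} local configuration at $h$, which forces $hy\in\widetilde Z\cap F$ for some $y\in Y$. (In your cover language: $\{[\{hy\}^{\downarrow}]: y\in Y\}$ is a cover of $[h^{\downarrow}]=[g_0^{\downarrow}]$, because appending inverse edges never creates $C$-incompatibilities, so no ultrafilter containing $g_0$ can avoid all of $F$.) The same argument is also needed at elements $g\in Z$ that do end in $E^{-1}$, for classes other than that of the last letter of $g$ -- a case your cover claim excludes; and for the class of the last letter the claim would be false outright, since then $ge\notin\FC$. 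So the truly delicate point is not the bookkeeping of trivial filters you flag at the end, but establishing the \emph{absence} of finite classes along the backward part of the saturation; once that is in place, your disjointness and mutual-inversion remarks go through essentially as in the paper.
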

\begin{proof}
We will carefully extend the bijection from $\mathfrak U'$ to $\mathfrak U$ defined in Theorem \ref{cor:ultrafilters} to a bijection from $\mathfrak T'\cup \SInf $ to $\mathfrak T$.

We first define a map $\varphi \colon \mathfrak T'\cup \SInf\to \mathfrak T$. Take $Z'\in \mathfrak T'$, and let $v\in E^0$ be the unique vertex such that $Z'$ is a non-trivial lower subset of $\FC (v)$. By definition, for each $g\in Z'$, the set $Z'_g$ is a finite-maximal local configuration.  Let $S$ be the set of all elements $g\in Z'$ such that $g$ ends in $E^{-1}$ and $g$ cannot be extended to an element $z\in Z'$ such that $z$ ends in $E^1$, and define $\varphi (Z') = Z'\setminus S$. Note that this agrees with the definition used in Theorem \ref{cor:ultrafilters}
for the set $\mathfrak U'$ of non-trivial lower subsets of $\FC$ such that all sets $Z_g$ are maximal local configurations.

  This map is completed by sending $v\in \SInf$ to $\varphi (v) = \{ v\} \in \mathfrak F (v)$.

 Write $Z= \varphi (Z') = Z'\setminus S$. Let us check that $Z\in \mathfrak T (v)$. As in the proof of Theorem \ref{cor:ultrafilters}, we see that $Z\in \mathfrak F (v)$. 
	We now show that $Z$ belongs to the closure of $\mathfrak U$ in $\mathfrak F$. Let $\mathcal Z (I\setminus F)$ be a basic open compact neighborhood of $Z$, where $I\in \Y _0$ and $F$ is a finite subset of $\mathcal N (I)$, see Notation \ref{notati:opencompactbasis}. Then $I\subseteq Z$ and $h\notin Z$ for each $h\in F$. We show that there exists $\widetilde{Z}\in \mathfrak U$ such that $Z\subseteq \widetilde{Z}$ and $h\notin \widetilde{Z}$ for each $h\in F$. 
	Indeed each element $h\in F$ can be written as $h=g_hg_h'\in \FC(v)$, where $g_h\in I$ and $g_h'\in \mathcal N (r(g_h))$, with $g_h[g_h']_1 \notin I$.
We rewrite these elements in  the form $h= g_h''y_h$, where $g_h''= g_hz_h$, where $z_h$ is a product (possibly empty) of inverses of edges of $E$.  
Observe that $g''_h\in Z'$, because $Z'\in \mathfrak T'$, so that each local configuration at an element of $Z'$ contains all inverses of edges ending at the range of that element. 

We now fix an element of the form $g''_h\in Z'$, $h\in F$, and extend the local configuration $Z'_{g_h''}$ of $Z'$ at $g_h''$, which is a finite-maximal local configuration, to a {\it maximal} local configuration $\mathfrak c$. Notice that there might be various elements $h'\in F$ such that $g''_{h'} = g''_{h}$ for the fixed element $h\in F$. Let $F'= F'_h$ be the set of such elements, so that we can write  
$$F'=\{g''_h y_1, \dots , g''_hy_p \},\quad \text{ with } y_1,\dots , y_p\in E^1. $$
We need that the extended local configuration $\mathfrak c$ satisfies $\mathfrak c \cap \{y_1, \dots, y_p\} = \emptyset $. Write $w:= r(g_h'')\in E^0$.
Let $x\in E^1\cup E^{-1}$ be the last letter of $g_h''$. If $x= f^{-1}$ for some $f\in E^1$, then $f\in Z'_{g_h''}$. Let $X_f$ be the unique element of $C_w$ such that $f\in X_f$.
Then $f$ is the unique element of $X_f\cap Z'_{g_h''}$. Let $X\in C_w$, and assume that $X\ne X_f$ in the case where $g_h''$ ends in $E^{-1}$. If $X\in \Cfin_w$, then since 
$Z'_{g_h''}$ is a finite-maximal configuration, there exists a unique $e_X\in X\cap Z'_{g_h''}$, and observe that by definition of $Z=\varphi (Z')$,
we have $g_h''e_X\in Z$. In particular we get that $e_X\ne y_i$ for $i=1,\dots ,p$, because $Z\cap F=\emptyset$. Now suppose that $|X| = \infty$ and that $X\cap Z'_{g_h''}\neq \emptyset$.
Then there is a unique element $e_X$ in $X\cap Z'_{g_h''}$, and as before $e_X\ne y_i$ for all $i$. Finally suppose that $|X| = \infty$ and that $X\cap Z'_{g_h''} = \emptyset$.
In this case, since $X$ is infinite, we may choose an element $e_X\in X$ such that $e_X\ne y_i$ for $i=1,\dots , p$. By means of these choices we obtain a maximal local configuration $\mathfrak c$ extending the local configuration $Z'_{g_h''}$ and such that $\mathfrak c \cap \{y_1,\dots , y_p\} = \emptyset$. 

Let $Z''$ be the union of $Z'$ and the set of all the elements $g_h''e_X$, where $e_X\in X$ are the choices of elements as before, for all $X\in C_w$ with $|X| = \infty$
and $X\cap Z'_{g_h''} =\emptyset$, where $h$ ranges on all elements of $F$. In this way, we have completed the local configuration of $Z'$ at each element $g_h''$ as above to a maximal local configuration $Z''_{g_h''}$, so obtaining a $C$-compatible lower subset $Z''$ of $\FC (v)$ such that $Z'\subseteq Z''$, and such that $F\cap Z'' =\emptyset$. Now $Z''$ can be further extended to $Z'''\in \mathfrak U'$. Set $\widetilde{Z} := \varphi (Z''')\in \mathfrak U$.  Then clearly $I \subseteq Z\subseteq \widetilde{Z}$ and $\widetilde{Z} \cap F= \emptyset$, so that $\widetilde{Z}\in \mathfrak U \cap \mathcal Z (I\setminus F)$, as desired.
	
A similar proof gives that all elements $\{v\}$, where  
$v\in \SInf$, belong to $\mathfrak T$. Indeed here a basic open compact neighborhood $\mathcal Z (I\setminus F)$ of $\{v\}$ must satisfy $I=\{v\}$ and $F$ must be a finite subset of $s^{-1}(v)$. The argument above obviously applies in this situation.   
	
	    We will now define the inverse map $\psi \colon \mathfrak T \to \mathfrak T'\cup \SInf$. First, observe that if $v$ is a sink in $E$ and $\{v \} \in \mathfrak T$, then $v$ must belong to $\SInf$, because if there exists $X\in \Cfin_v$ then taking $\mathcal Z (I\setminus F)$, where $I=\{v\}$ and $F= X$, we see that $\mathcal Z (I\setminus F)$ cannot contain any element from $\mathfrak U$. Hence we obtain a bijection between $\SInf$ and the set of sinks $v$ of $E$ such that $\{v\} \in \mathfrak T$.

Hence we concentrate in defining $\psi$ on the set of elements of $\mathfrak T$ which are not of the form $\{v\}$, for $v$ a sink in $E$. Let $Z\in \mathfrak T (v)$ be such element.  We define 
$$\psi (Z) = Z\bigsqcup \{ gx_1^{-1}\cdots x_n^{-1} \in \FC : g\in Z, n\ge 1, x_i\in E^1, \text{ and } gx_1^{-1} \notin Z \}.$$
Then $\psi (Z)$ is a non-trivial lower subset of $\FC (v)$, where $v$
is the vertex such that $Z\in \mathfrak F (v)$. Note that this extends the definition of $\psi$ in the proof of Theorem \ref{cor:ultrafilters}. We check that $Z'_h$ is a finite-maximal local configuration for all $h\in Z'\setminus Z$, where $Z':= \psi (Z)$. 
Write $h=gx_1^{-1} \cdots x_n^{-1}$, where $g\in Z$, $n\ge 1$, $x_i\in E^1$ and $gx_1^{-1}\notin Z$. Observe that $x_n\in Z'_h$. Let $X\in C_{r(h)}$ be such that $x_n\in X$.
If there exists $Y\in \Cfin_{r(h)}\setminus \{X\}$, then take
$$F: = \{ gx_1^{-1} \cdots x_n^{-1} y : y\in Y \}.$$
Then $Z\in \mathcal Z (g^{\downarrow} \setminus F)$, and since $Z\in \mathfrak T$, there exists $\widetilde{Z}\in \mathfrak U\cap \mathcal Z (g^{\downarrow} \setminus F)$. But this is impossible because the local configuration of $\widetilde{Z}$ at $h$ cannot be maximal, since $F\cap \widetilde{Z} = \emptyset$. Hence $\Cfin_{r(h)}= \{X\}$ if $X\in \Cfin$
 and $\Cfin_{r(h)} = \emptyset$ if $X\notin \Cfin$. In any case we see that the local configuration $Z'_h$ is a finite-maximal local configuration. 
Similarly, if $g\in Z$ and if $Y\in \Cfin _{r(g)}$, with $Y\ne X$ in case that the last letter of $g$ is of the form $e^{-1}$, for $e\in X\in C_{r(g)}$, then there must be an element $y\in Y\cap Z_g$, because otherwise a similar argument as before would contradict the fact that  $Z\in \mathfrak T$.  We therefore get that the local configuration $Z'_g$ is a finite-maximal local configuration also when $g\in Z$. We conclude that $Z' =\psi (Z) \in \mathfrak T'$, as desired. 

It is easy to show that $\varphi$ and $\psi$ are mutually inverse maps.  This concludes the proof. 
\end{proof}

\begin{remark}
    \label{rem:maximals-in-tight filters}
If $Z\in \mathfrak T$ and $g\in \max (Z)$, then $g$ must end in $E^1$ (if $g$ is non-trivial), and it follows from the proof of Theorem \ref{thm:charac-tightfilters} that all edges $y\in s_E^{-1}(r(g))$ belong to sets $X\in C_{r(g)}$ such that $|X|= \infty$. So all maximal elements of $Z$ are either sinks or ``$C$-infinite emitters'' in the above sense. (Compare with Remark \ref{rem:maximal-elements-of-ultrafilters}.)
\end{remark}
	  
 \begin{corollary}
	\label{cor:ultrafilters-closed} Let $(E,C)$ be a separated graph. Then the space $\mathfrak U$ of ultrafilters on the semilattice of idempotents of $\IS(E,C)$ is closed in the space of all filters if and only if $(E,C)$ is finitely separated, that is, all elements of the partition $C$ are finite sets.  
\end{corollary}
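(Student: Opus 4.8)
The plan is to reduce the statement to the single identity $\mathfrak U=\mathfrak T$. Since $\mathfrak T$ was \emph{defined} as the closure of $\mathfrak U$ inside $\mathfrak F$, the set $\mathfrak U$ is closed precisely when it equals its own closure, that is, when $\mathfrak U=\mathfrak T$. I would then translate this equality through the two combinatorial descriptions already available: Theorem \ref{cor:ultrafilters} identifies $\mathfrak U$ with the set $\mathfrak U'$ of non-trivial lower subsets all of whose local configurations are \emph{maximal}, while Theorem \ref{thm:charac-tightfilters} identifies $\mathfrak T$ with $\mathfrak T'\cup \SInf$, where $\mathfrak T'$ consists of those subsets all of whose local configurations are merely \emph{finite-maximal}. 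Since a maximal configuration is in particular finite-maximal, and since the bijection $\varphi$ for tight filters restricts to the one for ultrafilters, the inclusion $\mathfrak U\subseteq \mathfrak T$ corresponds to $\mathfrak U'\subseteq \mathfrak T'$, with the vertices of $\SInf$ accounting exactly for the \emph{trivial} filters $\{v\}$ (which never lie in $\mathfrak U'$). Hence $\mathfrak U=\mathfrak T$ if and only if $\mathfrak U'=\mathfrak T'$ and $\SInf=\emptyset$.

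For the implication ``finitely separated $\Rightarrow$ closed'' I would simply compare the definition of a finite-maximal configuration with the description of maximal configurations in Lemma \ref{lem:properties-of-configs}(1): when every $X\in C_v$ is finite we have $\Cfin_v=C_v$, so the two notions coincide and $\mathfrak T'=\mathfrak U'$. Moreover $\SInf=\emptyset$, since a vertex in $\SInf$ would require $C_v$ to consist entirely of infinite blocks (or to be a sink, which the standing assumption that $E$ has no isolated vertices forbids). Thus $\mathfrak U=\mathfrak T$ and $\mathfrak U$ is closed.

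For the converse I argue contrapositively: assuming $(E,C)$ is not finitely separated, I produce a tight filter that is not an ultrafilter. Fix $v\in E^0$ and an infinite block $X\in C_v$. If $r^{-1}(v)=\emptyset$ and every block in $C_v$ is infinite, then $v\in \SInf$ and the trivial filter $\{v\}\in \mathfrak T\setminus \mathfrak U$ already does the job. Otherwise I build an element $Z'\in \mathfrak T'\setminus \mathfrak U'$: at the root $v$ I take the configuration $\mathfrak c=\{e_Y:Y\in \Cfin_v\}\cup\{e^{-1}:e\in r^{-1}(v)\}$, choosing one edge from each finite block and all inverse edges while deliberately omitting the infinite block $X$. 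In the present case $\mathfrak c$ is non-empty; it is finite-maximal but not maximal. I then extend $\mathfrak c$ to a (possibly infinite) Munn $(E,C)$-tree by recursively attaching, at each newly created vertex $g$, a finite-maximal configuration consisting of the tail forced by the last letter of $g$, one forward edge from each remaining finite block, and all inverse edges. The resulting $Z'$ is a non-trivial lower subset of $\FC(v)$ whose root configuration $Z'_v=\mathfrak c$ is not maximal, so $Z'\in \mathfrak T'\setminus \mathfrak U'$.

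The main obstacle is verifying that this greedy extension really lands in $\mathfrak T'$, i.e. that the tree is a genuine $C$-compatible lower subset with every local configuration finite-maximal. By Lemma \ref{lem:properties-of-configs}(3), $C$-compatibility of $Z'$ is equivalent to admissibility of each $Z'_g$, and admissibility can fail only through two distinct forward edges in a common block; since at each vertex we select at most one forward edge per block, this never occurs. The delicate point is reducedness and $C$-separatedness of the newly appended paths: when $g$ ends in an inverse edge $a^{-1}$, the tail is the forward edge $a$, which already occupies the block $X_a\ni a$, so finite-maximality for $X_a$ is met automatically by the tail and no forbidden pattern $a^{-1}b$ with $b$ in the same block is ever introduced. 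These local reducedness checks at each step are routine but must be carried out to confirm $Z'\in \mathfrak T'$, after which $\mathfrak U'\subsetneq \mathfrak T'$ gives $\mathfrak U\subsetneq \mathfrak T$ and hence $\mathfrak U$ is not closed.
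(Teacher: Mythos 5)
Your proposal is correct and follows essentially the same route as the paper's proof: reduce closedness to $\mathfrak U=\mathfrak T$, use Theorems \ref{cor:ultrafilters} and \ref{thm:charac-tightfilters} to see that finite separation makes finite-maximal configurations maximal and $\SInf=\emptyset$, and conversely exhibit either a vertex of $\SInf$ or a finite-maximal-but-not-maximal configuration avoiding an infinite block $X\in C_v$ and extend it to an element of $\mathfrak T'\setminus\mathfrak U'$. The only difference is that you spell out the recursive extension and its admissibility/reducedness checks, which the paper asserts without detail.
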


\begin{proof}
    Suppose first that $\mathfrak U =\mathfrak T$, that is, the space of ultrafilters is closed in the space of filters. If $v\in \SInf$, then $\{v\} \in \mathfrak T \setminus \mathfrak U$ by Theorem \ref{cor:ultrafilters} and Theorem \ref{thm:charac-tightfilters}. Hence $\SInf = \emptyset$. If $v\in E^0$ and there is $X\in C_v$ such that $|X| = \infty$, 
    then, since $v\notin \SInf$, we can define a finite-maximal local configuration $\mathfrak c$ at $v$ such that $\mathfrak c \cap X=\emptyset$. The finite-maximal local configuration $\mathfrak c$ can be extended to an element $Z'\in \mathfrak T'$ such that $Z'_v = \mathfrak c$, so that $Z'\notin \mathfrak U'$. Hence $Z: = \varphi (Z')\in \mathfrak T\setminus \mathfrak U$, which is a contradiction. Hence we obtain that $C= \Cfin$ and thus $(E,C)$ is finitely separated. 

    Conversely, if $(E,C)$ is finitely separated, then $\SInf = \emptyset$ and all finite-maximal local configurations are indeed maximal local configurations, so the result follows from Theorem \ref{cor:ultrafilters} and Theorem \ref{thm:charac-tightfilters}.
\end{proof}

\begin{example}
    \label{exam:tight spectrum-nonseparated}
Let $E$ be a non-separated graph. Then by Theorem \ref{thm:charac-tightfilters} and the remarks we made in Example \ref{exam:non-separated-filters-and-ultrafilters} and in Remark \ref{rem:maximals-in-tight filters}, the space of tight filters on the semilattice of idempotents of $\IS (E)$ can be identified with the union of the space of infinite paths with the space of finite paths ending in a singular vertex, where a singular vertex of $E$ is a vertex $v\in E^0$ such that $v$ is either a sink or an infinite emitter. Hence we recover a well-known result in the literature (see e.g. \cite{Exel:Partial_dynamical}*{Chapter~37}). Observe that in this case the space of maximal filters coincides with the space of tight filters if and only if the graph is row-finite, in the sense that each vertex emits only a finite number of edges. 
    \end{example}

\section{Algebras associated to separated graphs}
\label{sect:algebras}

In this section we study several algebras associated to a separated graph, and relate them to their (inverse) semigroups and associated topological groupoids. We mainly concentrate our study on the more tractable ``tame'' algebras of separated graphs.

Let $K$ be a commutative ring with unit. We will always assume that $K$ is endowed with a fixed involution that we denote by $\lambda\mapsto \overline \lambda$. We are going to work with $*$-algebras over $K$; these are algebras $A$ over $K$ endowed with an involution, that is, a conjugate $K$-linear anti-multiplicative and involutive map, generally denoted by $a\mapsto a^*$.

Recall from \cite{AG12} that given a separated graph $(E,C)$, we can associate to it the Cohn path algebra $\Co_K(E,C)$ and the Leavitt path algebra $\Le_K(E,C)$. More precisely, $\Co_K(E,C)$ is the universal $*$-algebra over $K$ generated by self-adjoint elements $v\in E^0$ and elements $e\in E^1$ satisfying the relations similar to those defining the semigroup $\S(E,C)$:
\begin{enumerate}
    \item[(V)] $vw=\delta_{v,w}v$ for all $v,w\in E^0$;
    \item[(E1)] $s(e)e= e= er(e)$ for all $e\in E^1$;
    \item[(E2)] $e^ *s(e)=e^* = r(e)e^*$ for all $e\in E^1$;
    \item[(SCK1)] $e^*f=\delta_{e,f}r(e)$ for all $e,f\in X$ with $X\in C$.
\end{enumerate}
The Leavitt path algebra $\Le_K(E,C)$ is the universal $*$-algebra over $K$ generated by the same generators and relations above plus the relation:
\begin{enumerate}
    \item[(SCK2)] $\sum_{e\in X}ee^*=v$ for every finite subset $X\in C_v$ with $v\in E^0$.
\end{enumerate}

Notice that $(E1)$ and $(E2)$ are equivalent as we deal with $*$-algebras. We decided to leave both conditions as they agree with the original definition for algebras in \cite{AG12}, where they are not equivalent.

In what follows we write $(E^1)^*=\{e^*:e\in E^1\}$ and use the convention that $s(e^*)=r(e)$ and $r(e^*)=s(e)$ for $e\in E^1$.
The algebras introduced above are $*$-algebras with the unique involution that makes all $v\in E^0$ self-adjoint and sends $e\mapsto e^*$. 

The (full) \emph{Toeplitz path \cstar{}algebra} $\To(E,C)$ is the enveloping $C^*$-algebra\footnote{Recall that the enveloping \cstar{}algebra of a $*$-algebra is the completion with respect to the largest \cstar{}(semi)norm, whenever this exists -- and it does exist for $*$-algebras generated by partial isometries.} of $\Co_\C (E,C)$, that is, the universal \cstar{}algebra generated by (self-adjoint) projections $v\in E^0$ and partial isometries $e\in E^1$ satisfying the relations (V), (E1), (E2) and (SCK1) above. And the (full) separated graph $C^*$-algebra $C^*(E,C)$ of $(E,C)$ is defined as the enveloping $C^*$-algebra of $\Le_\C (E,C)$, that is, the quotient of $\To(E,C)$ where we impose the extra relation (SCK2), see e.g. \cite{Ara-Goodearl:C-algebras_separated_graphs}.

Notice that the enveloping \cstar{}algebra $\To(E,C)$ of $\Co_\C(E,C)$, and hence also its quotient $C^*(E,C)$ do in fact exist. The reason is that the relations we impose on the generators imply that they are partial isometries, that is, $xx^*x=x$ holds for generators $x\in E^0\cup E^1$ of the $*$-algebra $\Co_\C(E,C)$. 

Let $S$ be a $*$-semigroup with zero. The $*$-algebra of $S$ over $K$, denoted here by $K[S]$ is the $*$-algebra over $K$ with the universal property that every zero-preserving $*$-homomorphism $S\to A$ into another $*$-algebra over $K$ extends uniquely to a $*$-homomorphism $K[S]\to A$. It can be concretely represented by formal finite sums of the form $\sum_{s\in S}\lambda_s s$ with $\lambda_s\in K$ and with the product and involution extending (conjugate) linearly the ones on $S$, where we use the convention that $0\in S$ also represents the zero of $K[S]$.\footnote{Formally $K[S]$ is therefore actually a quotient by the ideal generated by the zero element of $S$ viewed as an element of its usual $K$-algebra (where the zero of $S$ does not play the role of the zero of the algebra).} 

Recall that the involution on our $*$-semigroup $S$ is generally denoted by the pseudo-inverse notation $s\mapsto s^{-1}$, and in a $*$-algebra $A$ this is written as $a\mapsto a^*$. So when we talk about $*$-homomorphisms we mean a multiplicative map $\pi \colon S \to A$ satisfying $\pi(s^{-1})=\pi(s)^*$.

Let us specialize to complex algebras and go into \cstar{}algebras. In general, the $*$-algebra $\C[S]$ of an arbitrary $*$-semigroup $S$ 
need not admit an enveloping \cstar{}algebra as their representations might be ``unbounded''. This happens, for example, for the additive 
semigroup of naturals $(\N,+)$ with the trivial involution. However this is not a problem if we assume that $S$ is presented by generators 
$\G$ and relations $\R$ that contain the partial isometry relation $xx^*x=x$ for all $x\in \G$. Then the enveloping \cstar{}algebra  $C^*(S)$ of $\C[S]$ exists and it is the universal \cstar{}algebra for $*$-homomorphisms of $S$ preserving the zero.
Since $S$ is presented by $(\G,\R)$ we can also say that $C^*(S)$ is presented in the same way, as the universal \cstar{}algebra $C^*(\G,\R)$ generated by $\G$ satisfying the relations $\R$. 

The following result is then immediate from our definitions:

\begin{proposition}
Given a separated graph $(E,C)$, $\Co_K(E,C)$ is canonically isomorphic to the $*$-algebra $K[\S(E,C)]$ of the $*$-semigroup $\S(E,C)$. And $\To(E,C)$ is isomorphic to the universal \cstar{}algebra $C^*(\S(E,C))$ of the $*$-semigroup $\S(E,C)$. 
\end{proposition}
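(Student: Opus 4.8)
The plan is to show that in each case the two algebras co-represent the same functor and then invoke uniqueness of universal objects (Yoneda). I would begin with the purely algebraic statement. By the defining universal property of the semigroup $*$-algebra, a $*$-homomorphism $K[\S(E,C)]\to A$ into an arbitrary $*$-algebra $A$ over $K$ is the same datum as a zero-preserving $*$-homomorphism of $*$-semigroups $\S(E,C)\to A$. Since $\S(E,C)$ is itself presented by the generators $E^0\cup\hat E^1$ subject to relations (1)--(4) of Definition \ref{def:graphsemigroup}, such a homomorphism is in turn nothing but an assignment $v\mapsto\pi(v)$ (self-adjoint, for $v\in E^0$) and $e\mapsto\pi(e)$ (for $e\in E^1$), extended by $e^{-1}\mapsto\pi(e)^*$, whose images satisfy the semigroup relations (1)--(4).

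The heart of the argument is the observation that, once the pseudo-inverse $e^{-1}$ is identified with the algebra involution $e^*$, relations (1)--(4) translate verbatim into the Cohn relations (V), (E1), (E2), (SCK1). Indeed, relation (1) is exactly (V); relations (2) and (3) applied to $e\in E^1$ give $s(e)e=e=er(e)$, which is (E1), while applied to $e^{-1}\in E^{-1}$ and passed through the involution they give $e^*s(e)=e^*=r(e)e^*$, which is (E2); and relation (4) is exactly (SCK1). Hence an assignment of the generators satisfies the semigroup relations if and only if it satisfies the Cohn relations, so the functor $A\mapsto \mathrm{Hom}(K[\S(E,C)],A)$ coincides with $A\mapsto\mathrm{Hom}(\Co_K(E,C),A)$. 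By the universal property of $\Co_K(E,C)$ these functors are naturally isomorphic, and Yoneda yields a canonical $*$-isomorphism $K[\S(E,C)]\cong\Co_K(E,C)$ which is the identity on generators.

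For the \cstar{}statement I would run the identical argument in the category of \cstar{}algebras. Here $C^*(\S(E,C))$ is by definition the enveloping \cstar{}algebra of $\C[\S(E,C)]$, so $*$-homomorphisms from it into a \cstar{}algebra $B$ correspond to zero-preserving $*$-representations $\S(E,C)\to B$; and $\To(E,C)$ is the enveloping \cstar{}algebra of $\Co_\C(E,C)$, hence universal for representations of the generators by projections $v$ and partial isometries $e$ satisfying (V), (E1), (E2), (SCK1). The same relation-dictionary as above shows these two universal \cstar{}algebras co-represent the same functor, producing the canonical isomorphism. The one point deserving explicit mention is that these enveloping \cstar{}algebras exist: every generator is necessarily sent to a partial isometry, since $ee^{-1}e=e$ already holds in $\S(E,C)$ (because $e^{-1}e=r(e)$ by relation (4) with $e=f$, whence $er(e)=e$ by relation (3)), so all representations are automatically bounded.

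I expect no genuine obstacle here — the proposition really is immediate from the definitions — but rather a single point of bookkeeping: one must consistently track the identification of the semigroup pseudo-inverse $x\mapsto x^{-1}$ with the $*$-algebra involution $x\mapsto x^*$, the self-adjointness convention $v=v^{-1}$ for vertices, and the zero-preserving normalization ensuring the formal zero of $\S(E,C)$ maps to the algebra zero. The only place where care is needed is in verifying that relations (2)--(3) for the inverse edges $e^{-1}$ correctly reproduce (E2) after applying the involution.
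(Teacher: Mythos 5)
Your proposal is correct and matches the paper's treatment: the paper states this proposition as immediate from the definitions, precisely because both algebras are universal for the same data (generators $E^0\cup\hat E^1$ subject to relations (1)--(4), which translate verbatim into (V), (E1), (E2), (SCK1) under the identification $e^{-1}\leftrightarrow e^*$). Your additional care about the existence of the enveloping \cstar{}algebra via the partial isometry relation $xx^*x=x$ is exactly the point the paper also flags in its surrounding discussion.
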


As a conclusion, we can realize the algebras $\Le_K(E,C)$ and $C^*(E,C)$ as certain quotients of $K[\S(E,C)]$ and $C^*(\S(E,C))$, respectively. However it is not clear how to describe their representations directly in terms of the semigroup $\S(E,C)$ as the Cuntz-Krieger relation (SCK2) involves a sum and does not make sense in the category of semigroups. For inverse semigroups, this is solved via the notion of tight representations, and that is what we shall do for algebras associated to the quotient inverse semigroup $\IS(E,C)$. 

One of the main points that make the algebras of $\S(E,C)$ rather intractable is that, although the generators of this $*$-semigroup satisfy the partial isometry relation $xx^*x=x$, this is no longer true for arbitrary elements of $\S(E,C)$. This is related to the commutativity of the elements $e(x):=xx^*$ with $x\in \S(E,C)$, that is, the relation $e(x)e(y)=e(y)e(x)$, also written as $[e(x),e(y)]=0$ for $x,y\in \S(E,C)$; notice that this is exactly the extra relation that gives rise to the quotient inverse semigroup $\IS(E,C)$. This naturally yields the following notion:

\begin{definition}
    Let $A$ be a $*$-algebra that is generated by a set $\G\sbe A$ of partial isometries. Let $S\sbe A$ be the $*$-semigroup generated by $\G$. Given $x\in S$, we write $e(x):=xx^*$. We consider the $*$-ideal $J_S$ of $A$ generated by the commutators $[e(x),e(y)]$ with $x,y\in S$. The \emph{tame quotient} of $A$ with respect to $S$ is defined as the quotient $*$-algebra $A_\ab:=A/J_S$. If $A$ is a \cstar{}algebra, we also define the tame \cstar{}algebra as the quotient \cstar{}algebra $C^*_\ab (A):=A/\overline J_S$. 
\end{definition}

\begin{remark}
\label{rem:equality-of-abs}
One can apply the above, in particular, to the $*$-algebra $K[S]$ of a $*$-semigroup $S$ that is presented by generators $\G$ and relations $\R$ containing the partial isometry relations $xx^*x=x$ for all $x\in \G$. In this case $K[S]_\ab$ is the universal $*$-algebra generated by $\G$ satisfying the relations $\R$ plus the tame relation: $[e(x),e(y)]=0$ for all $x,y\in S$. In other words, $K[S]_\ab\cong K[S_\ab]$, where $S_\ab$ is the quotient of $S$ by the congruence generated by $e(x)e(y)=e(y)e(x)$ for all $x,y\in S$. Notice that $S_\ab$ is an inverse semigroup. A similar result holds for \cstar{}algebras: $C^*_\ab(S)\cong C^*(S_\ab)$. 
\end{remark}

\begin{definition}
	\label{def:algebras}
	Let $(E,C)$ be a separated graph, and let $K$ be a commutative ring with unit. 
	The {\it abelianized (or tame) Cohn path algebra} $\Co_K^\ab(E,C)$ is the tame $*$-algebra associated to the Cohn path algebra $\Co_K(E,C)$ with respect to the image of $\S(E,C)$. 
	
	And with respect to the canonical images of the same $*$-semigroup, we define similarly the {\it abelianized (or tame) Leavitt path algebra} $\Le_K^\ab(E,C)$ as the tame algebra associated to the Leavitt path algebra $\Le_K(E,C)$.
	
	In the context of \cstar{}algebras, the {\it Toeplitz algebra} $\T (E,C)$ is the tame $C^*$-algebra associated to the Toeplitz path $C^*$-algebra $\mathcal{TP}(E,C)$. And the {\it tame graph $C^*$-algebra} $\OO(E,C)$ is the tame $C^*$-algebra associated to the graph $C^*$-algebra $C^*(E,C)$.  
\end{definition}

\begin{proposition}
	\label{prop:abelCohn-is-semigroup-algebra}
	Let $(E,C)$ be a separated graph. Then 
	$$\Co_K^\ab (E,C)\cong K[\IS(E,C)].$$
	Moreover, we have an isomorphism of $C^*$-algebras
	$$\T(E,C)\cong C^*(\IS(E,C)).$$
\end{proposition}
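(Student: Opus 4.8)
The plan is to deduce both isomorphisms from the preceding proposition together with Remark~\ref{rem:equality-of-abs}, the decisive observation being that the inverse semigroup $\IS(E,C)$ \emph{is} the abelianization $\S(E,C)_\ab$ of the $*$-semigroup $\S(E,C)$. This was in fact already recorded in the discussion following Definition~\ref{def:graphsemigroup}: by definition $\IS(E,C)$ is the quotient of $\S(E,C)$ by the congruence generated by relation~(5), namely $(xx^{-1})(yy^{-1})\equiv(yy^{-1})(xx^{-1})$ for all $x,y$. Writing $e(x)=xx^{-1}$, this is exactly the congruence generated by $e(x)e(y)=e(y)e(x)$ that defines $\S(E,C)_\ab$ in Remark~\ref{rem:equality-of-abs}, so $\IS(E,C)=\S(E,C)_\ab$.

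For the first isomorphism I would first invoke the preceding proposition, giving $\Co_K(E,C)\cong K[\S(E,C)]$. Under this identification the distinguished generating set $\G=E^0\cup E^1$ of partial isometries generates a $*$-semigroup inside $K[\S(E,C)]$ which is precisely the canonical image of $\S(E,C)$ (indeed a faithful copy, since distinct nonzero elements of $\S(E,C)$ are distinct basis vectors of $K[\S(E,C)]$, with only the semigroup zero collapsing to the algebra zero). By Definition~\ref{def:algebras}, $\Co_K^\ab(E,C)$ is the tame quotient of $\Co_K(E,C)$ relative to this generating $*$-semigroup, whence $\Co_K^\ab(E,C)\cong K[\S(E,C)]_\ab$. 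Remark~\ref{rem:equality-of-abs} then gives $K[\S(E,C)]_\ab\cong K[\S(E,C)_\ab]=K[\IS(E,C)]$, as claimed.

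The \cstar{}case runs along the same lines, with $K[\,\cdot\,]$ replaced by the enveloping \cstar{}algebra, which exists because the generators satisfy $xx^*x=x$. The preceding proposition gives $\To(E,C)\cong C^*(\S(E,C))$; Definition~\ref{def:algebras} identifies $\T(E,C)$ with the tame \cstar{}quotient $C^*_\ab(\To(E,C))$; and the \cstar{}version of Remark~\ref{rem:equality-of-abs} yields $C^*_\ab(C^*(\S(E,C)))\cong C^*(\S(E,C)_\ab)=C^*(\IS(E,C))$. Both isomorphisms are then canonical and manifestly factor those of the preceding proposition, reproducing~\eqref{eq:tame-iso-Cohn-Cst}.

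The only genuine point to check --- and the main (mild) obstacle --- is the matching of the congruences: one must confirm that imposing $[e(x),e(y)]=0$ as $x,y$ range over the \emph{concrete} $*$-semigroup generated by $\G$ inside the algebra produces exactly the extra relation~(5) imposed over $x,y\in\S(E,C)$. This holds because every element of the generated $*$-semigroup is the canonical image of an element of $\S(E,C)$ and $e(\cdot)$ commutes with taking images, so the ideal (respectively closed ideal) generated by these commutators is generated by the images of the commutators $[e(x),e(y)]$, $x,y\in\S(E,C)$; this is precisely what makes the tame quotient equal to the semigroup algebra of $\S(E,C)_\ab=\IS(E,C)$. Everything else is a direct concatenation of the cited results.
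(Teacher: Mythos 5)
Your proof is correct and follows essentially the same route as the paper's: the key identification $\IS(E,C)=\S(E,C)_\ab$ combined with Remark~\ref{rem:equality-of-abs} (via the identification $\Co_K(E,C)\cong K[\S(E,C)]$ and its \cstar{}analogue). The paper states this in one line, while you additionally spell out the congruence-matching point, which is a harmless elaboration of the same argument.
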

\begin{proof}
	Since $\IS(E,C)= \S(E,C)_\ab$, this follows directly from Remark \ref{rem:equality-of-abs}.  
\end{proof}

By the above proposition, there is a faithful identification of $\IS(E,C)$ with its image in $\Co_K^\ab(E,C)$; this will be not always the case for $\Le_K^\ab(E,C)$. Recall also that if $S$ is an inverse semigroup, then its complex $*$-algebra $\C[S]$ embeds faithfully into its \cstar{}algebra $C^*(S)$; indeed, by Wordingham's Theorem (see \cite{Paterson:Groupoids}*{Theorem~2.1.1}), even the $\ell^1$-algebra of $S$ embeds into the reduced \cstar{}algebra $C^*_r(S)$, defined as the \cstar{}algebra generated by its left regular representation. 

As a direct consequence, we can find a linear basis of the tame Cohn path algebra. For the Cohn path algebra $\Co_K(E,C)$ this was done in \cite{AG12}. In a forthcoming paper we plan to also find a linear basis for the Leavitt path algebra $\Le_K(E,C)$.

\begin{corollary}
	\label{cor:basisforCab} A $K$-basis of $\Co_K^\ab(E,C)$ is given by the set of elements $x$ in $\Co_K^\ab(E,C)$ which are in Scheiblich normal form:
	$$x= (\gamma_1 \gamma_1^*)\cdots (\gamma_n \gamma_n^*)\lambda,$$
	where $\{ \gamma_1,\dots , \gamma_n\}$ is an incomparable $C$-compatible family of $C$-separated (reduced) paths, $\lambda $ is a $C$-separated (reduced) path, $s(\gamma_j)= s(\lambda)$ for $j=1,\dots,n$,  the paths $\lambda_j$ do not end in $(E^1)^*$, and $(\lambda \lambda^*)(\gamma_i\gamma_i^*)= \gamma_i \gamma_i^*$ for some $i$.
\end{corollary}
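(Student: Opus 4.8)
The plan is to deduce the basis statement directly from the identification $\Co_K^\ab(E,C)\cong K[\IS(E,C)]$ established in Proposition \ref{prop:abelCohn-is-semigroup-algebra}, together with the uniqueness of the Scheiblich normal form from Theorem \ref{thm:uniqueness-SNF}. The key conceptual point is that for an inverse semigroup $S$ with zero, the $*$-algebra $K[S]$ has, essentially by construction, the set $S^\times$ as a $K$-basis: every element is a finite $K$-linear combination of elements of $S$, with the convention that $0\in S$ represents the zero of the algebra, and distinct nonzero semigroup elements are $K$-linearly independent. So the whole task reduces to transporting this tautological semigroup basis through the isomorphism and matching it with the displayed normal-form expressions.

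First I would recall from Proposition \ref{prop:abelCohn-is-semigroup-algebra} that the canonical map identifies $\Co_K^\ab(E,C)$ with $K[\IS(E,C)]$, so that the image of $\IS(E,C)^\times$ in $\Co_K^\ab(E,C)$ is precisely a $K$-basis of the algebra. (Here the translation of conventions is that the involution $s\mapsto s^{-1}$ on $\IS(E,C)$ corresponds to $a\mapsto a^*$ on the algebra, so a path $\gamma$ maps to $\gamma$ and $\gamma^{-1}$ maps to $\gamma^*$; in particular each product $\gamma_i\gamma_i^{-1}$ becomes the self-adjoint idempotent $\gamma_i\gamma_i^*$.) Next I would invoke Theorem \ref{thm:uniqueness-SNF}: every nonzero element of $\IS(E,C)$ admits a \emph{unique} Scheiblich normal form
$$\gamma=(\gamma_1\gamma_1^{-1})\cdots(\gamma_n\gamma_n^{-1})\lambda,$$
subject exactly to the listed constraints (the $\{\gamma_j\}$ form an incomparable $C$-compatible family of $C$-separated paths not ending in $E^{-1}$, $\lambda\in\FC$, $s(\gamma_j)=s(\lambda)$, and $\lambda_0$ is a prefix of some $\gamma_i$). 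This gives a bijection between $\IS(E,C)^\times$ and the set of formal expressions in Scheiblich normal form.

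Combining these two facts yields the corollary: the displayed normal-form elements are exactly the images of the elements of $\IS(E,C)^\times$ under the isomorphism, and hence they form a $K$-basis of $\Co_K^\ab(E,C)$. I would remark that spanning is clear because every element of $\IS(E,C)$ reduces to such a form by Proposition \ref{prop:Scheiblich-form}, and $K$-linear independence is exactly the uniqueness half of Theorem \ref{thm:uniqueness-SNF} together with the linear independence of distinct semigroup elements in $K[\IS(E,C)]$. The only real point requiring care is the dictionary between the semigroup-theoretic and algebraic involutions and the condition ``$\lambda$ does not end in $(E^1)^*$'': this corresponds verbatim to ``$\gamma_j$ do not end in $E^{-1}$'' (and the analogous convention for $\lambda$) once one recalls that $e^{-1}$ on the semigroup side is written $e^*$ on the algebra side. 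I do not expect a genuine obstacle here; the substance of the result lives entirely in Theorem \ref{thm:uniqueness-SNF}, and this corollary is a direct translation of it into the language of the $*$-algebra basis.
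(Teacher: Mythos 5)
Your proposal is correct and follows exactly the paper's own route: the paper's proof is precisely the observation that the corollary follows from Proposition \ref{prop:abelCohn-is-semigroup-algebra} together with Theorem \ref{thm:uniqueness-SNF}, using that $\IS(E,C)^\times$ is tautologically a $K$-basis of $K[\IS(E,C)]$. Your extra remarks on the involution dictionary and on spanning via Proposition \ref{prop:Scheiblich-form} are accurate but not needed beyond what the paper's one-line argument already contains.
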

\begin{proof}
	This follows immediately from Proposition \ref{prop:abelCohn-is-semigroup-algebra} and Theorem \ref{thm:uniqueness-SNF}.
\end{proof}

Given an inverse semigroup $S$ with zero, we denote by $\mathcal G(S)$ the universal groupoid (preserving the zero) of the inverse  semigroup $S$. This can be viewed as the groupoid of germs of the canonical action of $S$ on the (zero-preserving) spectrum $\dual\E$ of its idempotent semilattice $\E=\E(S)$, i.e. the space of zero preserving characters $\E\to \{0,1\}$ endowed with the pointwise convergence topology. 
Alternatively, we may also view this as a space of filters $\xi\sbe \E^\times$, see the beginning of Section \ref{sect:tightsection} and \cite{Exel:Inverse_combinatorial} for more information. The tight groupoid of $S$ is defined as the restriction of  $\G(S)$ to the tight spectrum $\Etight$ of $\E$, which is the closure of the subspace of ultracharacters. We denote this groupoid by $\mathcal G _{\text{tight}}(S)$. It was shown in \cite{Steinberg2010}*{Theorem~6.3} that the semigroup $*$-algebra $K[S]$ can be realized as the 
\emph{Steinberg algebra} of $\G(S)$: this is the $*$-algebra $A_K(\G(S))$ over $K$ linearly generated by characteristic functions $1_U$ of 
compact open bisections $U\sbe \G(S)$ with product and involution generated by the relations
$$1_U\cdot 1_V=1_{UV},\quad 1_U^*=1_{U^{-1}},$$
where $UV=\{gh: g\in U, h\in V, \s(g)=\rg(h)\}\sbe\G(S)$ and $U^{-1}=\{g^{-1}:g\in U\}\sbe \G(S)$ are the product and inversion as subsets of $\G(S)$ (see e.g. \cite{Steinberg2010}*{Section~4}). 

This leads to the following definitions.

\begin{definition}
	\label{def:tight-algebras}
	Let $S$ be an inverse semigroup.
	The {\it tight $*$-algebra} of $S$ over $K$ and the {\it tight $C^*$-algebra} of $S$ are defined as
	$$K_\tight[S]:=A_K(\G_\tight(S)), \qquad C^*_\tight (S) = C^* (\G_\tight (S)).$$
	This is the universal $*$-algebra ($C^*$-algebra) for ``tight'' representations of $S$, see \cite{Exel:Inverse_combinatorial} for more details.
	
	Given a separated graph $(E,C)$ we write
	$$\G(E,C) := \G(\IS(E,C)), \qquad 
	\G_\tight(E,C) := \G_\tight(\IS(E,C)).$$
\end{definition}

It follows directly from Proposition~\ref{prop:abelCohn-is-semigroup-algebra} and from the observations and definitions above that 
$$\Co^\ab_K(E,C)\cong A_K (\G(E,C))\quad\mbox{and}\quad \T (E,C)\cong C^*(\G(E,C)).$$
We are going to get a similar isomorphism for tight algebras later (see Theorem \ref{thm:iso-tight-algebras}). 

The groupoid $\G(E,C)$ has a special structure as a consequence of Theorem \ref{thm:ECMunntrees}, see Corollary \ref{cor:iso-Co-algebras} .
To show this, we need the following result, which follows from Proposition \ref{prop:KelLawson-result} and \cite{MilanSteinberg}*{Theorem~5.1}. We present a short self-contained proof here for the convenience of the reader. 

\begin{proposition}\label{pro:iso-groupoid-co-algebras} 
	If $S=\E\rtimes_\theta^r\Group$ is a restricted semidirect product of a partial action of a group $\Group$ on a semilattice with zero $\E$, then $\theta$ induces a partial action $\hat\theta$ on $\dual\E$ (the spectrum of $\E$, consisting of zero-preserving characters). In this situation, we have a canonical isomorphism $\G(S)\cong \dual\E\rtimes_{\hat\theta}\Group$, and then also
	\begin{equation}\label{eq:isomorphisms-algebras-groupoid}
		K[S]\cong A_K(\G(S))\cong C_K(\dual\E)\rtimes_{\alpha}^\alg\Group,\quad C^*(S)\cong C^*(\G(S))\cong \contz(\dual\E)\rtimes_\alpha\Group,
	\end{equation}
	where $\alpha$ is the induced partial action from $\hat\theta$ on the algebra of functions $C_K(\dual\E)$ or $\contz(\dual\E)$.
\end{proposition}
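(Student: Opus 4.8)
The plan is to verify the three claims in sequence—first the existence of the dual partial action $\hat\theta$, then the groupoid isomorphism $\G(S)\cong\dual\E\rtimes_{\hat\theta}\Group$, and finally the algebra isomorphisms—relying on the concrete description of $S=\E\rtimes_\theta^r\Group$ and on the fact that the universal groupoid $\G(S)$ is the groupoid of germs of the canonical action of $S$ on $\dual\E$. The key observation driving everything is that, because $S$ is a restricted semidirect product, its semilattice of idempotents is (isomorphic to) $\E$ itself, so the spectrum on which $\G(S)$ is built is exactly $\dual\E$.

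First I would construct $\hat\theta$. Each $\theta_g\colon D_{g^{-1}}\to D_g$ is a partial isomorphism between ideals of $\E$; dualizing, it induces a homeomorphism between the clopen sets $\widehat{D_{g^{-1}}}$ and $\widehat{D_g}$ inside $\dual\E$, where $\widehat{D}=\{\chi\in\dual\E:\chi|_D\not\equiv 0\}$ (equivalently, the filters meeting $D$). Concretely, for a character $\chi$ supported on $D_{g^{-1}}$, one sets $\hat\theta_g(\chi)=\chi\circ\theta_g^{-1}$ on $D_g$ and $0$ elsewhere. I would check that $\hat\theta=(\hat\theta_g)_{g\in\Group}$ satisfies the partial action axioms; this is a routine dualization, since $\theta$ is already a partial action and dualization is contravariant-but-order-preserving on this lattice of ideals.

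Next comes the groupoid isomorphism. Here I would use the explicit description of $\G(S)$ as germs $[s,\chi]$ of the canonical action of $s\in S^\times$ on characters $\chi$ in the domain of $s$. Writing each nonzero $s\in S$ uniquely as $e\delta_g$ with $e\in D_g$ (using the semidirect product structure), the action of $s$ on $\dual\E$ factors through $\hat\theta_g$ restricted to the clopen set determined by $e$. The map sending the germ $[e\delta_g,\chi]$ to the element $(\chi,g,\hat\theta_g(\chi))$ of the transformation groupoid $\dual\E\rtimes_{\hat\theta}\Group$ should be the desired isomorphism. The main point to verify is that two elements $e\delta_g$ and $f\delta_h$ have the same germ at $\chi$ if and only if $g=h$ (together with agreement of domains near $\chi$)—this is precisely where the idempotent purity of the grading homomorphism $S^\times\to\Group$ (guaranteed by Proposition~\ref{prop:KelLawson-result}, since $S$ is strongly $E^*$-unitary) is essential: it ensures the $\Group$-label $g$ is locally determined by the germ, so the germ equivalence collapses exactly to the equivalence defining the transformation groupoid. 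I expect this to be the main obstacle: one must carefully match the germ relation of the universal groupoid with the orbit/transformation structure, and it is here that one genuinely uses that $S$ is a \emph{restricted} semidirect product of a strongly $E^*$-unitary semigroup rather than a general inverse semigroup.

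Finally, the algebra isomorphisms in~\eqref{eq:isomorphisms-algebras-groupoid} follow formally. The isomorphisms $K[S]\cong A_K(\G(S))$ and $C^*(S)\cong C^*(\G(S))$ are the general Steinberg/Exel identifications already recalled in the excerpt (via \cite{Steinberg2010}*{Theorem~6.3} and the universal-groupoid picture). Having established $\G(S)\cong\dual\E\rtimes_{\hat\theta}\Group$, I would then invoke the standard identification of the Steinberg algebra (respectively the groupoid \cstar{}algebra) of a transformation groupoid $X\rtimes\Group$ with the algebraic (respectively \cstar{}) partial crossed product $C_K(X)\rtimes^{\alg}_\alpha\Group$ (respectively $\contz(X)\rtimes_\alpha\Group$), where $\alpha$ is the partial action on functions induced by $\hat\theta$ via $\alpha_g(f)=f\circ\hat\theta_g^{-1}$. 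Composing these canonical isomorphisms yields the stated chain, completing the proof.
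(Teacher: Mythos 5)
Your proposal is correct and follows essentially the same route as the paper: dualize $\theta$ to a partial action $\hat\theta$ on $\dual\E$, identify the groupoid of germs of the canonical $S$-action on $\dual\E$ with the transformation groupoid $\dual\E\rtimes_{\hat\theta}\Group$ by reading off the group label (the paper phrases the key well-definedness step as $es=et\Rightarrow\sigma(s)=\sigma(t)$ for the grading $\sigma$, which is the same collapse of the germ relation you attribute to idempotent purity / the semidirect-product pair structure), and then quote the standard identifications of Steinberg algebras and \cstar{}algebras of partial transformation groupoids with algebraic and \cstar{}partial crossed products (the paper cites \cite{Steinberg2010}, \cite{BeuterGoncalves} and \cite{Abadie} for exactly these steps).
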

Here, for a totally disconnected locally compact Hausdorff space $X$, we write $C_K(X)$ for the space of functions $X\to K$ that are continuous and have compact support, where $K$ is endowed with the discrete topology; these are the \emph{locally constant functions}. This space is linearly generated by characteristic functions of compact open subsets of $X$; in other words, $C_K(X)=A_K(X)$ if we view $X$ as the trivial groupoid with only units.
\begin{proof}
	Recall that $\G(S)$ is the groupoid of germs $\hat{\E}\rtimes_{\hat c} S$ for the canonical action $c$ of $S$ on its semilattice of idempotents $\E=\E(S)$. This action is given by the semilattice isomorphisms $c_s\colon \E_{s^{-1}}\congto \E_s$ given by $c_s(e):=ses^{-1}$, where $\E_s$ is the (principal) ideal $ss^{-1}\E\idealin\E$. This induces the dualized action $\hat c$ of $S$ on $\dual\E$ via $\hat c_s\colon \dual\E_{s^{-1}}\to \dual\E_s$ by $\hat c_s(\chi)=\chi\circ c_{s^{-1}}$. Here $\dual\E_s$ is seen as an open subset of $\dual\E$ by the identification $\dual\E_s = \{ \chi \in \dual\E : \chi (ss^{-1})= 1 \}$, see e.g. \cite{LaLonde-ConditionK}.
	
	The groupoid $\G(S)$ is, by definition, the groupoid of germs of the action $\hat c$ (see \cite{Exel:Inverse_combinatorial}); this consists of equivalence classes $[\chi,s]$ of pairs $\chi\in \dual\E$, $s\in S$ with $\chi(ss^{-1})=1$, where $[\chi,s]=[\chi,t]$ if there is $e\in \E$ with $\chi(e)=1$ and $es=et$\footnote{We change the convention in comparison with the usual one, see for instance 
		\cite{Exel:Inverse_combinatorial}, in order to match our convention for semidirect products of partial actions.}.
	If this is the case, then $\sigma(s)=\sigma(es)=\sigma(et)=\sigma(t)$, where $\sigma\colon S^\times =(\E\rtimes^r_\theta\Gamma)^\times  \to \Gamma$ is the canonical partial grading homomorphism, that is, $\sigma(e,g)=g$. This implies that the map $\phi\colon \G(S)\to \dual\E\rtimes_{\hat\theta}\Gamma$, $\phi[\chi,s]:=(\chi,\sigma(s))$, is well defined, and straightforward computations show that $\phi$ is an isomorphism of topological groupoids. Here $\hat\theta$ is the partial action dualized from $\theta$, that is, $\hat\theta_g\colon \dual D_{g^{-1}}\to \dual D_g$, $\hat\theta_g(\chi):=\chi\circ\theta_{g^{-1}}$. Hence $\G (S)$ is the transformation groupoid $\dual\E\rtimes_{\hat\theta}\Gamma$ of the partial action $\hat\theta$ of $\Gamma $ on $\dual\E$. 
	
	Now the isomorphisms~\eqref{eq:isomorphisms-algebras-groupoid} follow from well-known results from the literature describing the algebra of the transformation groupoid of a partial action, see \cite{BeuterGoncalves}*{Theorem~3.2} for the isomorphism involving the Steinberg algebra, and \cite{Abadie}*{Theorem~3.3} for the \cstar{}algebra case.
\end{proof}

Combining the above proposition with Theorems \ref{thm:ECMunntrees} and  \ref{thm:semidirect-product}, we then get the following:

\begin{corollary}\label{cor:iso-Co-algebras}
	Let $(E,C)$ be a separated graph, and set $\E = \E (\IS(E,C))$. Then there are canonical isomorphisms
	\begin{equation}
		\label{eq:iso-Co1}
		\Co^\ab_K(E,C)\cong C_K(\dual\E)\rtimes_{\alpha}^\alg \Free \cong C_K(\dual\YY)\rtimes_{\alpha}^\alg \Free
	\end{equation}
	and
	\begin{equation}
		\label{eq:iso-Co2}
		\T(E,C)\cong \contz(\dual\E)\rtimes_\alpha\Free \cong \contz(\dual\YY)\rtimes_\alpha\Free,
	\end{equation} 
	where $\alpha$ is the partial action of $\Free$ on $\E\cong \YY$ (as in Theorem \ref{thm:ECMunntrees}), dually induced to $\dual\E$ and then to either the algebra $\Co_K(\dual\E)$ or the \cstar{}algebra $C_0(\dual\E)$.
\end{corollary}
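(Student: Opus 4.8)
The plan is to assemble the three structural results proved just before the statement. First I would invoke Proposition~\ref{prop:abelCohn-is-semigroup-algebra} to replace the tame algebras by semigroup algebras of the inverse semigroup $\IS(E,C)$, namely $\Co^\ab_K(E,C)\cong K[\IS(E,C)]$ and $\T(E,C)\cong C^*(\IS(E,C))$. Then Theorem~\ref{thm:ECMunntrees} realizes $\IS(E,C)$ as a restricted semidirect product $\YY\rtimes_\theta^r\F$ of the partial action $\theta$ of $\F=\F(E^1)$ on the semilattice with zero $\YY$. This places us precisely in the setting of Proposition~\ref{pro:iso-groupoid-co-algebras}, applied with $S=\IS(E,C)$, underlying semilattice $\YY$, and group $\Group=\F$.

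Next I would apply Proposition~\ref{pro:iso-groupoid-co-algebras} verbatim. It yields an isomorphism of groupoids $\G(\IS(E,C))\cong\dual\YY\rtimes_{\hat\theta}\F$ together with the chains
$$K[\IS(E,C)]\cong A_K(\G(\IS(E,C)))\cong C_K(\dual\YY)\rtimes_\alpha^\alg\F$$
and
$$C^*(\IS(E,C))\cong C^*(\G(\IS(E,C)))\cong\contz(\dual\YY)\rtimes_\alpha\F,$$
where $\alpha$ is the partial action induced on functions from the dualized action $\hat\theta$ on $\dual\YY$. Composing with the isomorphisms of Proposition~\ref{prop:abelCohn-is-semigroup-algebra} already gives the right-hand isomorphisms of both displays in the statement, that is, the descriptions over $\dual\YY$.

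The remaining task is to pass from $\dual\YY$ to $\dual\E$. For this I would use Theorem~\ref{thm:semidirect-product}, which provides a zero-preserving semilattice isomorphism $\YY\cong\E$. Since the spectrum construction $\E\mapsto\dual\E$ is functorial on semilattices with zero — a semilattice isomorphism inducing a homeomorphism of the associated spaces of characters/filters — the isomorphism $\YY\cong\E$ induces a homeomorphism $\dual\YY\cong\dual\E$. I would then check that this homeomorphism is equivariant for the respective dualized partial actions, so that it transports the partial dynamical systems and yields $C_K(\dual\YY)\rtimes_\alpha^\alg\F\cong C_K(\dual\E)\rtimes_\alpha^\alg\F$ and $\contz(\dual\YY)\rtimes_\alpha\F\cong\contz(\dual\E)\rtimes_\alpha\F$, completing the chains.

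The only genuinely non-formal point — the main (mild) obstacle — is this equivariance: one must verify that the identification $\YY\cong\E(\IS(E,C))$ from Theorem~\ref{thm:semidirect-product} intertwines the partial action $\theta$ on $\YY$ with the canonical action $c_s(e)=ses^{-1}$ of $\IS(E,C)$ on its idempotents. This compatibility is exactly what is established inside the proof of Proposition~\ref{pro:iso-groupoid-co-algebras}, where the groupoid of germs built from $c$ is shown to coincide with the transformation groupoid of $\hat\theta$; hence the verification reduces to matching these two descriptions of the same groupoid and is routine given the explicit formula $\theta_g(T)=(g\cdot T')_0$ from Theorem~\ref{thm:ECMunntrees}.
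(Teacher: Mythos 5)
Your proposal is correct and follows essentially the same route as the paper, whose entire proof is the one-line instruction to combine Proposition~\ref{prop:abelCohn-is-semigroup-algebra}, Theorem~\ref{thm:ECMunntrees}, Theorem~\ref{thm:semidirect-product} and Proposition~\ref{pro:iso-groupoid-co-algebras}; your write-up just makes that combination explicit. The equivariance point you flag at the end is the right thing to be careful about, and, as you note, it is already settled inside the proof of Proposition~\ref{pro:iso-groupoid-co-algebras} together with the explicit formula for $\theta$ in Theorem~\ref{thm:ECMunntrees}, so nothing further is needed.
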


\begin{remark}\label{rem:reduced-Toeplitz}
	The \cstar{}algebra $\T(E,C)\cong \contz(\dual\E)\rtimes_\alpha\Free$ is, in general, not nuclear, not even exact. For instance, if $E$ is the graph with a single vertex and a set $X$, $|X|>1$, of edges endowed with the free separation $C$ as in Example~\ref{exam:free-separation-first-one}, then $\T(E,C)$ 
	is not exact because the quotient \cstar{}algebra $\OO(E,C)$ is the full \cstar{}algebra $C^*(\F)$ of the free group $\F=\F(X)$, hence not exact (see \cite{BrownOzawa}*{Corollary~3.7.12}).
	This means that the groupoid $\G(E,C)$ is not amenable and the reduced \cstar{}algebra $\T_r(E,C):=C^*_r(\G(E,C))$ is different from $\T(E,C)$, in general (see \cite{BrownOzawa}*{Corollary~5.6.17 and Theorem~5.6.18}). Indeed, by \cite{XinLi}*{Proposition~3.1}, the isomorphism~\eqref{eq:iso-Co2} factors through the isomorphism
	$$\T_r(E,C)\cong \contz(\dual\E)\rtimes_{\alpha,r}\Free.$$
	Moreover, by \cite{BussFerraroSehnem}*{Theorem~3.10}, we have $\T(E,C)=\T_r(E,C)$ if and only if $\alpha$ is amenable (i.e, has the ``approximation property'' in the language of \cite{BussFerraroSehnem}), which means that the groupoid $\G(E,C)$ is amenable, and this happens if and only if $\T_r(E,C)$ is nuclear.
\end{remark}

We now introduce the idempotents of $\Co_K^{\text{ab}} (E,C)$ associated to the cylinder sets $\mathcal Z (I\setminus F)$ introduced in Notation \ref{notati:opencompactbasis}. 

\begin{definition}
	\label{def:idempotents-IsetminusF}
	Let $(E,C)$ be a separated graph and let $I\in \Y_0$. 
	\begin{enumerate}
		\item The idempotent $\e (I)$ associated to $I$ is 
		$$\e (I) = \prod _{\lambda\in \max (I)} \lambda \lambda^*.$$ 
		\item Suppose now that $\lambda \in \mathcal N (I)$. Then there is a unique decomposition $\lambda = \lambda_0\lambda_1$, where $\lambda_0\in I$, $\lambda_1= x_1^{-1}\cdots x_r^{-1}y$, $r\ge 0$, $x_1,\dots , x_r,y\in E^1$ and $\lambda_0x_1^{-1}\notin I$. Define
		$$\e (\lambda_0\setminus \lambda_1) = \lambda_0\lambda_0^*-\lambda \lambda^* = \lambda_0 (r(\lambda_0) - \lambda_1\lambda_1^*)\lambda_0^* \in \Co_K^\ab(E,C).$$
		\item For a finite subset $F$ of $\mathcal N (I)$, define the idempotent associated to $\mathcal Z (I\setminus F)$ by
		$$\e (I\setminus F) = \e (I)\cdot \prod_{\lambda_0\lambda_1\in F} \e(\lambda_0\setminus \lambda_1) = \prod _{\lambda_0\lambda_1\in F} \e (I\setminus \{\lambda_0\lambda_1\}).$$
		\item For $v\in E^0$ and $e\in s^{-1}(v)$ we define
		$$\e (v\setminus e) = v-ee^*.$$
		\item For $v\in E^0$ and $X\in \Cfin_v$, define
		$$q_X = \e (v\setminus X) = \prod_{e\in X} \e (v\setminus e).$$
	\end{enumerate}
\end{definition}

Notice that $e^*f=0$ for $e\not=f$ in the same set $X\in C_v$. Hence
$$q_X=v-\sum_{e\in X}ee^*.$$
This projection is therefore related to the Cuntz relation $\sum_{e\in X}ee^*=v$, and in particular the image of this projection is zero in the Leavitt path algebra $\Le_K^\ab(E,C)$. Indeed, we have:

\begin{lemma}\label{lem:quotient-ideal-Cuntz-rel}
	$\Le_K^\ab(E,C)\cong \Co_K^\ab(E,C)/\mathcal{Q}$, where $\mathcal Q$ is the $*$-ideal of $\Co_K^\ab(E,C)$ generated by the projections $q_X$ with $X\in \Cfin$.
\end{lemma}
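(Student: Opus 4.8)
The plan is to argue at the level of universal presentations. By Remark~\ref{rem:equality-of-abs} and Proposition~\ref{prop:abelCohn-is-semigroup-algebra}, the algebra $\Co_K^\ab(E,C)\cong K[\IS(E,C)]$ is the universal $*$-algebra generated by $E^0\cup E^1$ subject to the relations (V), (E1), (E2), (SCK1) together with the tame relations $[e(x),e(y)]=0$ for all $x,y\in\S(E,C)$, where $e(x)=xx^*$. The same reasoning presents $\Le_K^\ab(E,C)$ by exactly the same generators and relations, with the single addition of the Cuntz--Krieger relation (SCK2): $\sum_{e\in X}ee^*=v$ for every $X\in\Cfin_v$ and $v\in E^0$. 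Rewriting (SCK2) as $q_X=v-\sum_{e\in X}ee^*=0$, passing from $\Co_K^\ab(E,C)$ to $\Le_K^\ab(E,C)$ amounts precisely to imposing $q_X=0$ for all $X\in\Cfin$, i.e.\ to quotienting by the two-sided $*$-ideal generated by $\{q_X:X\in\Cfin\}$, which is $\mathcal Q$ by Definition~\ref{def:idempotents-IsetminusF}. This already yields the claimed isomorphism.

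To make this precise -- and to sidestep the fact that the tame quotient is defined relative to the generating $*$-semigroup of each algebra -- I would phrase the above ideal-theoretically. Write $A=\Co_K(E,C)$, let $J\idealin A$ be the tame ideal generated by the commutators $[e(x),e(y)]$ with $x,y\in\S(E,C)$, and let $\mathcal I\idealin A$ be the ideal generated by the elements $q_X$, $X\in\Cfin$. By definition $\Co_K^\ab(E,C)=A/J$ and $\Le_K(E,C)=A/\mathcal I$; let $\pi\colon A\to A/J$ and $\rho\colon A\to A/\mathcal I$ denote the quotient maps. Since $\pi$ is surjective and $\mathcal Q$ is by definition generated by the $\pi(q_X)$, we have $\mathcal Q=\pi(\mathcal I)$, and therefore
$$\Co_K^\ab(E,C)/\mathcal Q=(A/J)/\pi(\mathcal I)\cong A/(J+\mathcal I).$$

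It then remains to identify $A/(J+\mathcal I)$ with $\Le_K^\ab(E,C)$. Here I would use that $\Le_K^\ab(E,C)=\Le_K(E,C)/J'$, where $J'$ is the tame ideal of $\Le_K(E,C)=A/\mathcal I$, generated by the commutators $[e(\bar x),e(\bar y)]$ with $\bar x,\bar y$ ranging over the image of $\S(E,C)$ in $\Le_K(E,C)$. Because $\rho$ is a surjective $*$-homomorphism with $\rho(e(x))=e(\rho(x))$, the generators of $J'$ are exactly the images under $\rho$ of the generators of $J$, so that $J'=\rho(J)$ and hence
$$\Le_K^\ab(E,C)=(A/\mathcal I)/\rho(J)\cong A/(\mathcal I+J).$$
Comparing the two displays gives $\Le_K^\ab(E,C)\cong\Co_K^\ab(E,C)/\mathcal Q$. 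The only genuine point requiring verification is the equality $J'=\rho(J)$, i.e.\ that abelianizing and imposing the Cuntz--Krieger relations commute as quotient operations; this is the mild crux of the argument and reduces to the compatibility $\rho(e(x))=e(\rho(x))$ of range projections under $\rho$ together with the surjectivity of $\rho$.
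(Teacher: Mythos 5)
Your proof is correct and takes essentially the same route as the paper's: the paper's (very terse) proof likewise asserts that $\Le_K^\ab(E,C)$ is the quotient of $\Co_K^\ab(E,C)$ by the $*$-ideal generated by the Cuntz relations $\sum_{e\in X}ee^*=v$, and identifies that ideal with $\mathcal Q$ via the identity $q_X=v-\sum_{e\in X}ee^*$ coming from the (SCK1)-orthogonality of the $ee^*$. Your ideal-theoretic verification that abelianization and the Cuntz--Krieger quotient commute (the equality $J'=\rho(J)$ and the common identification with $A/(J+\mathcal I)$) just spells out rigorously the step the paper leaves implicit.
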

\begin{proof}
	$\Le_K^\ab(E,C)$ is the quotient of $\Co_K^\ab(E,C)$ by the $*$-ideal generated by the Cuntz relations $\sum_{e\in X}ee^*=v$ for $X\in \Cfin_v$, $v\in E^0$, and by the previous observations this equals the ideal $\mathcal Q$ defined as in the statement.
\end{proof}

\begin{remark}
	\label{rem:e-for-X}
	It is convenient, for some computations, to extend the definition of the idempotent map $\e$ to general elements of $\Y$, with the same formulas. Indeed for $I\in \Y$ we have
	$$\e(I) := \prod _{\mu \in \max (I)} \mu \mu^* =\prod _{\lambda \in I} \lambda \lambda^*
	= \prod _{\lambda_0 \in I_0} \lambda_0 \lambda_0^* = \prod _{\mu \in \max (I_0)} \mu \mu^*= \e(I_0),$$
	where we have used the notation introduced in Lemma \ref{lem:maxelements}.
\end{remark}

We now obtain, using  Lemma \ref{lem:opencompactbasis}, a complete description of the Boolean algebra of idempotents of the commutative subalgebra of $\Co_K^\ab(E,C)$ generated by the set of idempotents $\mathcal E$ of $\IS(E,C)$. 

\begin{proposition}
	\label{prop:idempotents-in-CKEhat0} Let $K$ be an indecomposable commutative ring with unit.
	Let $(E,C)$ be a separated graph and let $\mathcal C$ be the subalgebra of $\Co_K^\ab(E,C)= K[\IS(E,C)]$ generated by $\mathcal E= \mathcal E (\IS (E,C))$. Then we have $\mathcal C \cong C_K(\hat{\mathcal E})$ and any idempotent of $\mathcal C$ is an orthogonal finite sum of idempotents of the form $\e (I\setminus F)$, for $I\in \Y_0$ and $F$ a finite subset of $\mathcal N (I)$. 
\end{proposition}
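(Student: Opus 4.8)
The plan is to prove both assertions simultaneously by transporting the question into the crossed-product model
$$\Co_K^\ab(E,C)\cong C_K(\dual\E)\rtimes_\alpha\F$$
supplied by Corollary~\ref{cor:iso-Co-algebras}, and then reducing everything to elementary facts about locally constant, compactly supported functions on the totally disconnected space $\dual\E\cong\dual\YY\cong\mathfrak F$ (using Theorem~\ref{thm:semidirect-product} for $\E\cong\YY$ and Proposition~\ref{prop:characterizing-filt-ultra-tight} for $\dual\YY\cong\mathfrak F$).

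First I would record how $\mathcal E$ sits inside this crossed product. Since $\IS(E,C)\cong\YY\rtimes_\theta^r\F$ (Theorem~\ref{thm:ECMunntrees}), every idempotent of $\IS(E,C)$ has trivial $\F$-component; hence under Corollary~\ref{cor:iso-Co-algebras} the generator $\e(I)$, for $I\in\Y_0$, is carried to the characteristic function $1_{\mathcal Z(I)}$ of the basic compact open set $\mathcal Z(I)=\{Z\in\mathfrak F: I\subseteq Z\}$, regarded as an element of the coefficient subalgebra $C_K(\dual\E)\subseteq C_K(\dual\E)\rtimes_\alpha\F$. Consequently $\mathcal C$ is identified with the $K$-subalgebra of $C_K(\dual\E)$ generated by $\{1_{\mathcal Z(I)}:I\in\Y_0\}$; in particular the map $\mathcal C\to C_K(\dual\E)$ is injective, being a restriction of the isomorphism of Corollary~\ref{cor:iso-Co-algebras}, so to establish~(1) it remains to prove surjectivity, i.e.\ $\mathcal C=C_K(\dual\E)$.

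For surjectivity I would first compute directly from Definition~\ref{def:idempotents-IsetminusF} that $\e(I\setminus F)$ is sent to $1_{\mathcal Z(I\setminus F)}$: since $\lambda_0\in I$ one has $\e(I)\,\e(\lambda_0\setminus\lambda_1)=\e(I)-\e(I\cup\{\lambda_0\lambda_1\}^{\downarrow})$, which corresponds to $1_{\mathcal Z(I)}-1_{\mathcal Z(I\cup\{\lambda_0\lambda_1\}^{\downarrow})}=1_{\mathcal Z(I)\setminus\mathcal Z(I\cup\{\lambda_0\lambda_1\}^{\downarrow})}$, and taking the product over $\lambda_0\lambda_1\in F$ yields the characteristic function of $\mathcal U(\{I\},\{I\cup\{\lambda_0\lambda_1\}^{\downarrow}\})=\mathcal Z(I\setminus F)$. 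Thus each $1_{\mathcal Z(I\setminus F)}$ lies in $\mathcal C$. By Lemma~\ref{lem:opencompactbasis} the sets $\mathcal Z(I\setminus F)$ form a basis of compact open subsets of $\mathfrak F$ and every compact open subset is a finite disjoint union of such sets; since $C_K(\dual\E)$ is $K$-linearly spanned by the characteristic functions of compact open sets, it is spanned by the $1_{\mathcal Z(I\setminus F)}$, hence contained in $\mathcal C$. This gives $\mathcal C=C_K(\dual\E)$ and proves~(1).

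Finally, for~(2) I would invoke the hypothesis that $K$ is indecomposable. An idempotent of $\mathcal C\cong C_K(\dual\E)$ is a function $f$ with $f(x)^2=f(x)$ for every $x$, so each value $f(x)$ is an idempotent of $K$ and hence equal to $0$ or $1$ (the only idempotents of an indecomposable commutative ring); therefore $f=1_U$ for the compact open set $U=f^{-1}(1)$. Applying Lemma~\ref{lem:opencompactbasis} once more, write $U=\bigsqcup_{i}\mathcal Z(I_i\setminus F_i)$ as a finite disjoint union, so that $f=\sum_i 1_{\mathcal Z(I_i\setminus F_i)}$ with pairwise orthogonal summands (the sets being disjoint). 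Transporting back through the isomorphism of~(1) expresses the original idempotent as $\sum_i \e(I_i\setminus F_i)$, an orthogonal finite sum of the required form. I expect the only genuinely delicate point to be the bookkeeping that identifies $\e(I\setminus F)$ with $1_{\mathcal Z(I\setminus F)}$ and confirms that the basis of Lemma~\ref{lem:opencompactbasis} is precisely the one realized by these idempotents; once the crossed-product picture is in place, the remaining steps are formal.
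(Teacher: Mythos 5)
Your proof is correct and follows essentially the same route as the paper's: transport everything through the isomorphism $\Co_K^\ab(E,C)\cong C_K(\dual\E)\rtimes_\alpha^\alg\F$ of Corollary~\ref{cor:iso-Co-algebras}, use indecomposability of $K$ to see that idempotents of $C_K(\dual\E)$ are characteristic functions of compact open sets, match $\e(I\setminus F)$ with $1_{\mathcal Z(I\setminus F)}$, and conclude with Lemma~\ref{lem:opencompactbasis}. The only difference is that you explicitly verify the surjectivity $\mathcal C = C_K(\dual\E)$ (via the computation $\e(I)\e(\lambda_0\setminus\lambda_1)=\e(I)-\e(I\cup\{\lambda_0\lambda_1\}^{\downarrow})$ and the spanning argument), a step the paper's proof asserts without detail; this is a welcome filling-in of the argument rather than a different approach.
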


\begin{proof} The algebra $\mathcal C$ is the image of $C_K(\dual\E)$ under the isomorphism $C_K(\dual\E)\rtimes_{\alpha}^\alg \Free
	\cong \Co^\ab_K(E,C)$ of Corollary \ref{cor:iso-Co-algebras}. Since $K$ is indecomposable, the unique idempotents of $K$ are $0$ and $1$ and it follows that the idempotents of $C_K(\dual\E)$ are exactly the characteristic functions of open-compact subsets of $\dual\E$.  
	Moreover each open-compact set $\mathcal Z (I\setminus F)$ corresponds to the idempotent $\e (I\setminus F)$ in $\Co_K^\ab (E,C)$. Hence the result follows from Lemma \ref{lem:opencompactbasis}. 
\end{proof}

\subsection{Leavitt path algebras and tight groupoids}
We now prepare to show our main result in this section, stating that $\Le^\ab_K(E,C)$ is isomorphic to the Steinberg algebra of the tight groupoid $\mathcal G_\tight (E,C)$ of $(E,C)$, and similar results for both the full and the reduced tame $C^*$-algebras $\OEC$ and $\OECR$. To show these results, the essential ingredient is Theorem \ref{thm:Leavitt-universal-tight} below. 

We first define tight representations of an inverse semigroup, see e.g. \cite{Exel:Inverse_combinatorial}.

\begin{definition}
	\label{def:tight-repres} Let $S$ be an inverse semigroup with zero, and denote by $\mathcal E$ its semilattice of idempotents.
	\begin{enumerate}
		\item A {\it cover} of $p\in \mathcal E$ is a finite subset $Z\sbe \{q\in \E: q\leq p\}$ such that for all $0\not= q\leq p$, there exists $z\in Z$ with $zq\not=0$.
		\item Let $\pi\colon S\to A$ be a $*$-homomorphism from an inverse semigroup (with zero) $S$ into a $K$-algebra with involution $A$. The image of the idempotents $\pi(\E)\sbe A$ generates a (generalized)\footnote{Meaning that the Boolean algebra may not have a unit, see \cite{Exel:Tight-cover}.} Boolean algebra of idempotents of $A$ with respect to $p\wedge q:=pq$ and $p\vee q:=p+q-pq$. The $*$-homomorphism $\pi$ is  \emph{tight} if its restriction to $\E=\E(S)\sbe S$ is tight, meaning that it preserves finite \emph{covers}, that is, for every finite cover $Z$ of $p$ in $\E$, we have
		$$\bigvee_{z\in Z}\pi(z)=\pi(p).$$
	\end{enumerate}
	
	We can now state the main technical result of this section.
\end{definition}

\begin{theorem}\label{thm:Leavitt-universal-tight}
	Let $(E,C)$ be a separated graph and consider its associated inverse semigroup $\IS(E,C)$. Let $K$ be a commutative ring with unit and let $A$ be a $*$-algebra over $K$. 
	Then a $*$-homomorphism $\pi\colon \IS(E,C)\to A$ is tight if and only if it extends to a $*$-homomorphism $\Le_K^\ab(E,C)\to A$.
\end{theorem}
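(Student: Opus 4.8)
The plan is to route the argument through the semigroup algebra and the explicit ideal description of the Leavitt quotient. By Proposition~\ref{prop:abelCohn-is-semigroup-algebra} we have $\Co^\ab_K(E,C)\cong K[\IS(E,C)]$, so by the universal property of the semigroup algebra any $*$-homomorphism $\pi\colon\IS(E,C)\to A$ extends uniquely to $\tilde\pi\colon\Co^\ab_K(E,C)\to A$, and by Lemma~\ref{lem:quotient-ideal-Cuntz-rel} this $\tilde\pi$ factors through $\Le^\ab_K(E,C)=\Co^\ab_K(E,C)/\mathcal{Q}$ exactly when $\tilde\pi(q_X)=0$ for every $X\in\Cfin_v$, $v\in E^0$. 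Since $q_X=v-\sum_{e\in X}ee^*$, this says precisely that $\pi(v)=\sum_{e\in X}\pi(ee^*)$, i.e.\ the Cuntz relation (SCK2) at every finite $X$. The theorem thus reduces to the equivalence: $\pi$ is tight if and only if $\pi$ satisfies (SCK2) at every finite $X\in C_v$.

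For the forward implication I would apply tightness to one explicit cover. I first claim that for $X\in\Cfin_v$ the finite set $Z_X=\{ee^*:e\in X\}\sbe\E$ is a cover of $v$. Indeed $ee^*\le v$, and given a nonzero idempotent $q=\e(I)\le v$ with $I\in\Y_0(v)$, I examine the local configuration $I_v$ of $I$ at $v$: if $I_v\cap X=\{x\}$ then $xx^*\cdot\e(I)=\e(I)\neq0$, while if $I_v\cap X=\emptyset$ then for any $e\in X$ the set $I\cup e^\downarrow$ is again $C$-compatible (by Lemma~\ref{lem:properties-of-configs}, since adjoining the length-one edge $e$ keeps every local configuration admissible), so $ee^*\cdot\e(I)=\e(I\cup e^\downarrow)\neq0$; either way $q$ meets $Z_X$. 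As $(ee^*)(ff^*)=\delta_{e,f}ee^*$ for $e,f\in X$ by (SCK1), the idempotents $\pi(ee^*)$ are pairwise orthogonal, so tightness gives $\pi(v)=\bigvee_{e\in X}\pi(ee^*)=\sum_{e\in X}\pi(ee^*)$, which is (SCK2).

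The converse is the main obstacle, and here I would work modulo $\mathcal{Q}$ inside $\Co^\ab_K(E,C)$. Conjugating $q_X\in\mathcal{Q}$ by a leaf $g\in\max(I)$ (which, as $I\in\Y_0$, does not end in $E^{-1}$, so each $gx$, $x\in X$, lies in $\FC$ and $I\cup\{gx\}$ is $C$-compatible) yields the refinement relation
\begin{equation*}
\e(I)\equiv\sum_{x\in X}\e(I\cup\{gx\})\pmod{\mathcal{Q}},\qquad X\in\Cfin_{r(g)}.
\end{equation*}
Now let $Z=\{\e(J):J\in\mathcal{J}\}$ be a finite cover of $p=\e(I)$; replacing each $J$ by $J_0$ I may assume $J\supseteq I$ and $J\in\Y_0$. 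Since $\pi(\e(J))\le\pi(\e(I))$ always, tightness at this cover is equivalent to $\tilde\pi(\omega)=0$ for $\omega=\e(I)\prod_{J}(\e(I)-\e(J))$, and because $\tilde\pi$ factors through $\Le^\ab_K(E,C)$ it suffices to show $\omega\in\mathcal{Q}$. To this end I build a refinement tree rooted at $I$: a node $K$ (a finite $C$-compatible lower set in $\Y_0$ with $K\supseteq I$) is declared a leaf once $K\supseteq J$ for some $J\in\mathcal{J}$, and otherwise is split, via the relation above, at a chosen leaf along a chosen finite $X$. If the tree is finite, iterating the relation gives $\e(I)\equiv\sum_{\text{leaves }K}\e(K)\pmod{\mathcal{Q}}$, and for a dominated leaf $K\supseteq J$ the factor $\e(K)(\e(I)-\e(J))=\e(K)-\e(K)=0$ annihilates the corresponding summand, whence $\omega\in\mathcal{Q}$.

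It therefore remains to establish termination, which is where the characterization of the tight spectrum enters and where I expect the real difficulty to lie. I would make the splitting \emph{fair} (eventually addressing every pair consisting of a position $g$ and a finite $X\in\Cfin_{r(g)}$, while freely adjoining the idempotent-preserving backward edges $ge^{-1}$ and continuing to extend them forwards). By König's lemma, since the tree is finitely branching, it suffices to rule out an infinite branch $I=K_0\subsetneq K_1\subsetneq\cdots$ of non-dominated nodes. Fairness is arranged so that $K_\infty=\bigcup_n K_n$ belongs to $\mathfrak F$ and has finite-maximal local configuration at every point, whence $K_\infty\in\mathfrak T$ by Theorem~\ref{thm:charac-tightfilters}. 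On the other hand, using the elementary fact that a finite $Z$ is a cover of $p$ if and only if every ultrafilter containing $p$ contains some element of $Z$, the set $U=\{Z'\in\mathfrak F:\ I\sbe Z',\ J\not\sbe Z'\ \forall J\in\mathcal{J}\}$ is open and disjoint from $\mathfrak U$, hence from $\mathfrak T=\overline{\mathfrak U}$. But each $K_n$ contains no $J$ (else it would be a leaf) and each $J$ is finite, so $K_\infty$ contains no $J$ either, i.e.\ $K_\infty\in U\cap\mathfrak T=\emptyset$, a contradiction. Thus there is no infinite branch, the tree is finite, and the argument closes. The delicate point to nail down is exactly this termination step: arranging the fair refinement so that every infinite branch is genuinely a tight filter, which in particular requires handling the backward edges and verifying the condition that each element of $K_\infty$ extend to one not ending in $E^{-1}$.
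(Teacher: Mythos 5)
Your forward implication and your algebraic reduction of the converse are both sound: the cover $\{ee^*:e\in X\}$ of $v$ is exactly the one the paper uses, and recasting tightness at a cover $\{\e(J):J\in\mathcal J\}$ of $\e(I)$ as the single condition $\e(I)\prod_{J}(\e(I)-\e(J))\in\mathcal Q$, to be verified by telescoping Cuntz splittings over a refinement tree, is a legitimate repackaging of the paper's Claim~1. The gap is the termination step, and it is not merely ``delicate'': as written it fails, for three separate reasons.

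First, you require $K_\infty$ to lie in $\mathfrak F$ \emph{and} to have finite-maximal local configurations at every point. These two demands are incompatible in most graphs: finite-maximality at $g$ forces $gx^{-1}\in K_\infty$ for \emph{every} $x\in r^{-1}(r(g))$, while membership in $\mathfrak F$ forces each such $gx^{-1}$ to extend inside $K_\infty$ to a path ending in $E^1$; in a trivially separated graph no such extension ever exists, since any forward letter following an inverse letter violates $C$-separation. Already for the single-loop graph (the bicyclic monoid) there is no set with both properties. Theorem \ref{thm:charac-tightfilters} identifies tight filters with the sets $Z'\setminus S$, $Z'\in\mathfrak T'$: you have conflated a tight filter $Z$ with its completion $\psi(Z)$. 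Second, there is a cardinality obstruction that no rearrangement of the fairness bookkeeping can cure: $K_\infty$ is a countable union of finite sets, hence countable, but the paper makes no cardinality assumptions on graphs, and tight filters can be forced to be uncountable --- for one vertex with uncountably many loops and the free separation (Example \ref{exam:free-separation-first-one}), the unique tight filter is all of $\F(X)$. So K\H{o}nig's lemma plus a countable fair schedule is structurally the wrong tool. Third, your tree has no move along infinite separation sets, so a cover element $J$ whose divergence from the branch is a forward edge in some infinite $X\in C$ is never resolved: it is neither engulfed nor made incompatible. Precisely these elements are what the first and longest portion of the paper's proof handles (the pigeonhole argument showing that cover elements passing through infinite separation sets are redundant), and nothing in your proposal replaces that ingredient.

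A repair along your lines is possible, but it changes the argument's shape: let the tree address only the finitely many tasks of resolving the $J$'s (adjoin the backward edge, or split along the finite set, occurring at the current divergence of each compatible undominated $J$); then every branch has length at most $\sum_J|J|+|\mathcal J|$, the tree is finite, and no topology is needed at all. The one remaining configuration is a ``stuck'' node $K$ at which every compatible undominated $J$ diverges along an infinite set $X_J$ at some $\beta_J\in K$; there one contradicts the cover hypothesis directly by choosing $f_J\in X_J$ avoiding the finitely many relevant edges and checking that $L=K\cup\bigcup_J\{\beta_Jf_J\}^{\downarrow}$ is $C$-compatible, contains $I$, and is orthogonal to every $\e(J)$. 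That stuck-node argument is in essence the paper's redundancy argument; it is the missing idea, and the compactness argument you propose cannot substitute for it.
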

\begin{proof}
	First assume that the $*$-homomorphism $\pi\colon \IS(E,C)\to A$ is tight. We prove that for each finite subset $X\in \Cfin_v$, the set $Z:=\{ee^{-1}: e\in X\}$ is a finite cover of $v\in \E=\E(\IS(E,C))$. Indeed, it is clear that $ee^{-1}\leq v$ for all $e\in E^1$ with $s(e)=v$. We know that non-zero idempotents of $\IS(E,C)$ can be uniquely represented in the form of a finite product 
	$$p:=\prod_{\alpha\in \max (I_0)}\alpha\alpha^{-1},$$
	where $[I]\in \YY$, see Theorem~\ref{thm:semidirect-product}, so that each $\alpha\in \max (I_0)$ does not end in $E^{-1}$. If $p\leq v$, then this means that $I_0\in \mathcal X _0(v)$. Suppose first that there is some $\alpha\in \max (I_0)$, with $\alpha=\alpha_1\cdots\alpha_n$ for $\alpha_i\in E^{1}\sqcup E^{-1}$ and $\alpha_n\in E^1$, such that $\alpha_1\in X$.
	Then $$\alpha_1\alpha_1^{-1}p = p\ne 0,$$
	as desired. Suppose now that no $\alpha \in \max (I_0)$ starts with an edge in $X$. Then we may consider the set
	$$I' = I_0 \cup \{x\},$$
	where $x$ is an arbitrary (but fixed) element of $X$. Observe that $I'\in \mathcal X_0 (v)$ and $xx^{-1} p = \e (I') \ne 0$.
	Hence we have shown that $Z$ is a finite cover of $v$. Hence a tight $*$-homomorphism $\pi\colon \IS(E,C)\to A$ satisfies the relation
	$$\bigvee_{e\in X}\pi(ee^{-1})=\pi(v)$$
	for all $v\in E^0$ and $X\in \Cfin_v$. This means that the extension of $\pi$ to $K[\IS(E,C)]\cong \Co^\ab_K(E,C)$ vanishes on the $*$-ideal $\mathcal Q\idealin \Co^\ab_K(E,C)$ generated by the differences $\sum_{e\in X} ee^* -v$ with $v\in E^0$ and $X\in \Cfin_v$. And by Lemma~\ref{lem:quotient-ideal-Cuntz-rel}, we have $\Co^\ab_K(E,C)/\mathcal Q\cong \Le_K^\ab(E,C)$. Therefore $\pi$ extends to a $*$-homomorphism of $K$-algebras $\Le_K^\ab(E,C)\to A$, as desired.
	
	For the converse we have to show that the canonical representation $\rho \colon \IS(E,C) \to \Le_K^\ab (E,C)$ is tight. 
	For this part of the proof, we will extensively use the notion of $C$-compatiblility introduced in Section \ref{sect:InvSemSG}.
	Recall that we say that $I,J\in \mathcal X_0$ are $C$-compatible, or that $I$ is $C$-compatible with $J$, in case $I\cup J$ is $C$-compatible.  
	Given $I,J\in \mathcal X_0$, notice that $\e (I) \e (J) \ne 0$ if and only if $I$ and $J$ are $C$-compatible. 
	
	Let $Z$ be a finite cover of some nonzero idempotent $p$ in $\mathcal E$. We know that we may represent $p$ in the form $p= \e (I) = \prod _{\lambda \in \max (I)} \lambda \lambda^{-1}$ for a unique $I\in \mathcal X _0(v)$. Since $z\le p$ for each $z\in Z$, we have $z= \e (I_z)$ for a unique $I_z\in \mathcal X _0$, with $I\subseteq I_z$. Note that the condition that $Z$ is a finite cover is equivalent to saying that each $J\in \mathcal X_0$ such that $I\subseteq J$ must be $C$-compatible with $I_z$, for some $z\in Z$. 
	
	We start by showing that we can assume that $Z$ does not contain certain types of idempotents. Of course we can assume that $I\subsetneq I_z$ for all $z\in Z$. For $J\in \mathcal X_0$ with $I\subsetneq J$,
	we write every $\alpha \in J\setminus I$ in the form $\alpha = \alpha_I \alpha_{J\setminus I}$, where $\alpha_I$ is the  maximum initial segment of $\alpha$ such that $\alpha_I \in I$. 
	We say that $z\in Z$ is {\it redundant} if $Z\setminus \{z\}$ is a finite cover of $p$. Observe that we can assume that $Z$ does not contain any redundant element. Now we show that any $z\in Z$ such that 
	$I_z$ contains an element $\alpha \in I_z\setminus I$ such that $\alpha_{I_z\setminus I}= \gamma_0 e \gamma_1$, where $e\in Y$ and $|Y|= \infty$ is redundant. Suppose for contradiction that there is $z_0\in Z$ such that $I_{z_0}$ contains an element $\alpha \in I_{z_0}\setminus I$ such that $\alpha_{I_z\setminus I}= \gamma_0 e_0 \gamma_1$, where $e_0\in Y_0$ and $|Y_0|= \infty$, and such that $z_0$ is not redundant. This means that there exists $J\in \mathcal X _0$ such that $I\subseteq J$, $J$ is $C$-compatible with $I_{z_0}$, and $J$ is $C$-incompatible with all $I_{z}$ for $z\in Z\setminus \{z_0\}$.
	Since $I_{z_0}$ and $J$ are $C$-compatible, by replacing $J$ with $J\cup I_{z_0}$, we can assume that $I_{z_0}\subseteq J$. In particular we get that $\alpha_I \gamma _0 e_0 \in J$. For each $f\in Y_0\setminus \{e_0\}$, we consider a new element $J_f \in \mathcal X _0$, with $I\subseteq J_f$, defined as follows:
	$$J_f = (J\setminus \{\alpha_{I} \gamma_0 e_0 \lambda : \alpha_{I} \gamma_0 e_0 \lambda\in J \} ) \cup \{ \alpha_I \gamma_0 f \}.$$
	The element $J_f$ is obtained by killing all the branches of $J$ corresponding to $\alpha_I \gamma_0 e_0$, and replacing $\alpha_I \gamma_0 e_0$ with $\alpha_I \gamma_0 f$. Observe that $J_f$ is $C$-incompatible with $I_{z_0}$, so it must be $C$-compatible with $I_z$ for some $z\in Z\setminus \{z_0\}$, because $Z$ is a cover of $p=\e (I)$. Let $Z(f)$ be the subset of elements of $Z$ which are $C$-compatible with $J_f$. We claim that, for each $z\in Z(f)$, $I_z$ does contain $\alpha_I \gamma_0 f$. If $I_z$ does not contain $\alpha_I \gamma_0 f$, then it cannot contain any path of the form $\alpha_I \gamma_0 f'$, for $f'\in Y_0$, because $J_f$ and $I_z$ are $C$-compatible. In particular $I_z$ does not contain $\alpha_I \gamma_0 e_0$, and hence does not contain any path in the branch of $\alpha_I \gamma_0 e_0$. 
	But since $J$ and $I_z$ are $C$-incompatible there exists two different edges $g_1,g_2\in X$, for some $X\in C$, such that $\beta_1 = \beta_I \tau_0 g_1 \in J$ and  $\beta_2 = \beta_I \tau_0 g_2\in I_z$. But then $\beta_I \tau_0 g_1 \in J\setminus \{\alpha_{I} \gamma_0 e_0 \lambda : \alpha_{I} \gamma_0 e_0 \lambda\in J \}$, and thus $\beta_I \tau_0 g_1 \in J_f$, so that $J_f$ and $I_z$ are $C$-incompatible, which contradicts the fact that $z\in Z(f)$. Hence we get that $\alpha_I \gamma_0 f\in I_z$ for all $z\in Z(f)$.
	But now we see that any two $I_{z_1}$ and $I_{z_2}$, where $z_1\in Z(f_1)$ and $z_2\in Z(f_2)$ for distinct $f_1,f_2\in Y_0\setminus \{e_0\}$ must satisfy that $I_{z_1}$ and $I_{z_2}$ are $C$-incompatible,
	because $\alpha_I \gamma_0 f_1\in I_{z_1}$ and $\alpha_I \gamma_0 f_2\in I_{z_2}$, and $f_1,f_2$ are distinct elements of $Y_0\in C$; and it follows that $z_1z_2 = 0$. We infer that the subsets 
	$$\{Z(f): f\in Y_0\setminus \{e_0\} \}$$ 
	of $Z$ are mutually orthogonal. Since all the sets $Z(f)$ are non-empty, $Z$ is finite, and $Y_0$ is infinite, we obtain a contradiction.    
	We have thus shown that any $z\in Z$ such that 
	$I_z$ contains an element $\alpha \in I_z\setminus I$ such that $\alpha_{I_z\setminus I}= \gamma_0 e \gamma_1$, where $e\in Y$ and $|Y|= \infty$ is redundant. Therefore we can assume that all elements of $Z$ have the property that, for each $\alpha = \alpha_I \alpha_{I_z\setminus I}\in I_z$, all edges appearing (with positive exponent) in the path  $\alpha_{I_z\setminus I}$ belong to some $X\in \Cfin$. 
	
	In preparation for the final part of the proof, we show the following claim.
	
	\medskip
	
	\noindent \underline{Claim 1:}
	Suppose that $Z$ is a finite cover of $p$, and suppose that $z\in Z$ is such that there is a $C$-separated path $\alpha\in \FC$ of the form $\alpha = \alpha_0 \mu e$, with $\alpha \notin I_z$, such that $\alpha_0\in I_z$, $\mu$ is a (possibly empty) product of inverse edges $a^{-1}$, for $a\in E^1$, and $e\in X\in \Cfin$. Suppose that, for all $f\in X$, we have $\alpha_0 \mu f \notin I_z$. Set $\alpha (f) = \alpha_0 \mu f$, which is a $C$-separated path, and $I_{z,\alpha(f)} := I_z\cup \{\alpha(f)\}^{\downarrow}\in \mathcal X _0$, and $z_{\alpha(f)} = \e (I_{z,\alpha(f)})$.
	
	Then 
	$$Z' = (Z\setminus \{z\})\cup \{ z_{\alpha(f)} : f\in X\}$$
	is also a finite cover of $p$, and also $\bigvee_{z\in Z}  \rho (z) = \bigvee_{z'\in Z'} \rho (z')$.
	
	\medskip

	\noindent \underline{Proof of Claim 1:}  Let $J\in \mathcal X_0$ such that $I\subseteq J$. Then $J$ is $C$-compatible with $I_{z_1}$ for some $z_1\in Z$. If $z_1\ne z$ we are done. If $z_1= z$ then $J$ must be $C$-compatible with at least one of the elements $I_{z,\alpha(f)}$, for some $f\in X$. This shows that $Z'$ is a finite cover of $p$. Now observe that in $\Le_K^\ab (E,C)$, we have
	\begin{align*}
		\rho (\e (I_z)) = \alpha_0\alpha_0^{*}\rho (\e (I_z)) &
		= \alpha_0\mu \mu^{*} \alpha_0^* \rho (\e (I_z))\\
		&= \Big( \sum_{f\in X} \alpha_0 \mu ff^*\mu^* \alpha_0^*\Big) \rho (\e (I_z)) =  
		\bigvee _{f\in X} \rho (\e (I_{z,\alpha(f)})).
	\end{align*}
	This shows that $\rho (z) = \bigvee _{f\in X} \rho (\e (I_{z,\alpha (f)}))= \bigvee _{f\in X} \rho (z_{\alpha(f)})$ and hence $\bigvee_{z\in Z}  \rho (z) = \bigvee_{z'\in Z'} \rho (z')$.  
	This concludes the proof of the Claim 1. 
	
	\medskip

	We now will show that, given a finite cover $Z$ of $p$, there exists another finite cover $Z'$ of $p$, with $\bigvee _{z\in Z} \rho (z) = \bigvee_{z'\in Z'} \rho (z')$, and a finite lower subset $\mathfrak Z$ of $\F_C(v)$, not necessarily $C$-compatible, with $I\subseteq \mathfrak Z$, such that the following conditions are satisfied:
	
	\begin{enumerate}
		\item[(P1)]  For each $z'\in Z'$ and each $\alpha = \alpha_I \alpha_{I_{z'}\setminus I}\in I_{z'}$, all edges appearing (with positive exponent) in the path  $\alpha_{I_{z'}\setminus I}$ belong to some $X\in \Cfin$. 
		\item[(P2)] For all $z'\in Z'$ and for all $\alpha \in \mathfrak Z$ such that $\alpha = \alpha_0 \mu e$, with $\alpha_0\in I_{z'}$,
		$\mu$ is a (possibly empty) product of inverse edges, and $e\in X \in \Cfin$, there exists $f\in X$ such that $\alpha_0 \mu f\in I_{z'}$.
		\item[(P3)] $\mathfrak Z \subseteq \bigcup _{z'\in Z'} I_{z'}$, and for each $\alpha = \beta e \in I_{z'}$, with $e\in X\in \Cfin$ and $z'\in Z'$, there exists $f\in X$ such that $\beta f \in \mathfrak Z$.   
	\end{enumerate}
	
	Note that conditions (P1) and (P3) together imply that no maximal element of $\mathfrak Z$ ends in $E^{-1}$. Indeed suppose that $\alpha \in \max (\mathfrak Z )$. Then we have $\alpha \in I_{z'}$ for some $z'\in Z'$ by the first part of (P3). Hence there is a maximal element $\gamma = \beta e$ of $I_{z'}$, with $e\in E^1$, such that $\alpha$ is an initial segment of $\gamma $. If $\gamma \in I$ then $\alpha = \gamma \in \max (I)$, because $I\subseteq \mathfrak Z$, and thus $\alpha$ does not end in $E^{-1}$. If $\alpha \notin I$, then also $\gamma \notin I$, and thus $e\in X\in \Cfin$ by (P1). Therefore by the second part of (P3), there is $f\in X$ such that $\beta f \in \mathfrak Z$. If $\alpha $ is an initial path of $\beta$, then $\alpha $ is a proper initial segment of $\beta f \in \mathfrak Z$, which contradicts the maximality of $\alpha$. Hence $\alpha = \beta e$ does not end in $E^{-1}$.
	
	We now show that, given any finite cover $Z$ of $p$, there is a finite cover $Z'$ and a set $\mathfrak Z$ satisfying (P1)--(P3). By the first part of the proof, we know that we can assume that $Z$ satisfies (P1). Assuming this property, we define
	$$\mathfrak Z = \bigcup_{z\in Z} I_z.$$
	Then $\mathfrak Z$ is a finite lower subset of $\F_C(v)$, not necessarily $C$-compatible, with $I\subseteq \mathfrak Z$, and clearly $\mathfrak Z$ satisfies (P3) if $Z'$ is replaced with $Z$ in (P3). 
	Let $A$ be the set of those pairs $(\alpha, z) \in \mathfrak Z \times Z$ such that $\alpha = \alpha_0 \mu e$ with $\alpha _0\in I_z$, $\mu$ a (possibly empty) product of inverse edges, and $e\in X\in \Cfin$.
	For $(\alpha, z)\in A$, we say that $\text{P2}(\alpha,z)$ {\it holds} if (P2) does hold for the given elements $\alpha \in \mathfrak Z$ and $z\in Z$. 
	
	For an element $\alpha \in \mathfrak Z$, consider the set 
	$$\mathcal P (\alpha) = \{ z\in Z : (\alpha,z) \in A \text{ and } \text{P2}(\alpha,z) \text{ does not hold } \} .$$
	We also set, for $\alpha \in \mathfrak Z$,
	$$d_Z (\alpha) = | \{ z\in Z : (\alpha, z) \in A \text{ and } \text{P2}(\alpha, z) \text{ does not hold } \}| = | \mathcal P (\alpha)|.$$
	If property (P2) does not hold for $\mathfrak Z$ and $Z$, we take $\alpha\in \mathfrak Z$ of minimal length such that $d_Z(\alpha) >0$. If $z\in \mathcal P (\alpha)$, then  $\alpha  = \alpha_0 \mu e$, where $\alpha _0\in I_z$, $\mu $ is a (possibly empty) product of inverse edges, and $e\in X\in \Cfin$, and there is no $f\in X$ such that $\alpha_0 \mu f\in I_z$. Set $\alpha (f) = \alpha_0 \mu f$, for $f\in X$, which is a $C$-separated path, and $I_{z,\alpha(f)} := I_z\cup \{\alpha(f)\}^{\downarrow}\in \mathcal X _0$, and $z_{\alpha(f)} = \e (I_{z,\alpha(f)})$. By Claim 1 we get
	$$\tilde{Z} = (Z\setminus \{z\})\cup \{ z_{\alpha(f)} : f\in X\}$$
	is also a finite cover of $p$, and also $\bigvee_{z\in Z}  \rho (z) = \bigvee_{\tilde{z}\in \tilde{Z}} \rho (\tilde{z})$. Moreover the finite cover $\tilde{Z}$ also satisfies (P1), and it satisfies (P3) with respect to the same set $\mathfrak Z$, because $\alpha = \alpha _0 \mu e\in \mathfrak Z$. Moreover $\alpha_0 \mu f \in I_{z, \alpha(f)}$ for each $f\in X$, hence we have
	$$d_{\tilde{Z}} (\alpha) = d_Z (\alpha) -1.$$
	Moreover the number $d_{\tilde{Z}} (\alpha')$, for $\alpha' \in \mathfrak Z$,
	can only be greater than $d_Z(\alpha')$ if $\alpha '$ has as a {\it proper} initial segment some of the paths $\alpha (f)$ for $f\in X$. Hence after $d_Z(\alpha)-1 $ additional  applications of Claim 1, we will get a finite cover $Z(\alpha)$ of $p$, with $\bigvee _{z\in Z} \rho (z) = \bigvee _{z'\in Z(\alpha)} \rho (z')$, such that $Z(\alpha)$ satisfies (P1) and also satisfies (P3) with respect to the original set $\mathfrak Z$. Moreover we will have that $d_{Z(\alpha)}(\alpha)= 0$ and $d_{Z(\alpha)} (\alpha')\le d_Z(\alpha')$ for all $\alpha'\in \mathfrak Z$ with the only possible exception of those $\alpha'\in \mathfrak Z$ which contain some of the paths $\alpha(f)$, for $f\in X$, as proper initial segments. In particular we have $d_{Z(\alpha)} (\beta) \le d_Z(\beta)$ for all $\beta\in \mathfrak Z$ with $|\beta | \le |\alpha|$. Hence we obtain a new finite cover $Z(\alpha)$ of $p$ with $d_{Z(\alpha)}(\beta) >0$ for one less path of minimal length $\beta\in \mathfrak Z$, and so we have reduced the number of paths of minimal length in $\mathfrak Z$ at which $d$ takes a positive value, or we have increased the minimal length of a path $\beta \in \mathfrak Z$ at which $d$ takes a positive value in case $\alpha$ was the only path of minimal length at which $d_Z$ takes a positive value. Note that the reference set $\mathfrak Z$ remains the same along all this process. Since $\mathfrak Z$ is finite, after a finite number of steps we will arrive at the desired finite cover $Z'$ of $p$.    
	
	This concludes the first part of the proof. Now we observe that if the 
	finite cover $Z$ of $p$ satisfies conditions (P1)--(P3) with respect to a finite reference set $\mathfrak Z$, then it also satisfies the same conditions with respect to the set $\mathfrak Z' :=  \bigcup_{z\in Z} I_z$, and then condition (P3) for the new set $\mathfrak Z'$ is redundant. Hence, changing notation, we will assume that $Z$ is a finite cover of $p$ such that the following conditions hold, where $\mathfrak Z := \bigcup _{z\in Z} I_z$:  
	
	\begin{enumerate}
		\item[(Q1)]  For each $z\in Z$ and each $\alpha = \alpha_I \alpha_{I_{z}\setminus I}\in I_{z}$, all edges appearing (with positive exponent) in the path  $\alpha_{I_{z}\setminus I}$ belong to some $X\in \Cfin$. 
		\item[(Q2)] For all $z\in Z$ and for all $\alpha \in \mathfrak Z$ such that $\alpha = \alpha_0 \mu e$, with $\alpha_0\in I_{z}$,
		$\mu$ is a (possibly empty) product of inverse edges, and $e\in X \in \Cfin$, there exists $f\in X$ such that $\alpha_0 \mu f\in I_{z}$.
	\end{enumerate}
	
	The next step is to show that if $\mathfrak Z \ne I$ then we can reduce the size of $Z$ such that conditions (Q1) and (Q2) still hold for the new finite cover $Z'$. 
	
	For this we need two more claims:
	
	\noindent \underline{\it Claim 2.} Assume that the finite cover $Z$ of $p$ satisfies (Q1) and (Q2). If $J \in \mathcal X _0$, with $I\subseteq J$, satisfies (Q1), (Q2) and the second part of (P3) with respect to the reference set $\mathfrak Z= \bigcup_{z\in Z} I_z$ (that is, (P3) is satisfied with $I_{z'}$ replaced with $J$ in its statement), then there is a unique $z\in Z$ such that $J = I_z$, hence $\e (J) = \e (I_z) = z$. 
	
	\medskip
	
	\noindent \underline{ \it Proof of Claim 2.} 
	Let $z\in Z$ such that $J$ is $C$-compatible with $I_z$. We will show that $J= I_z$. We start by showing that $J\subseteq I_z$. Looking for a contradiction, suppose there is $\alpha'\in \max (J)\setminus I_z$. We can then write $\alpha' = \alpha \gamma$, where $\alpha = \alpha _0 \mu e$, $\mu$ is a (possibly empty) product of inverse edges, and $e\in X\in C$, where $\alpha _0\in I_z$ and $\alpha\notin I_z$ . Observe that $\alpha\in J$ because $J$ is a lower set, and $X\in \Cfin$ by (Q1). By (P3) applied to $J$ and $\mathfrak Z$, there exists $f\in X$ such that $\alpha_0 \mu f \in \mathfrak Z$. Now by (Q2) applied to $I_z$ and $\alpha_0 \mu f\in \mathfrak Z$, there exists $h\in X$ such that $\alpha_0 \mu h\in I_z$. But now since $I_z$ and $J$ are $C$-compatible we must have that $h=e$, and thus $\alpha = \alpha_0 \mu e = \alpha_0 \mu h \in I_z$, a contradiction.
	
	Interchanging the roles of $J$ and $I_z$ in the above, we obtain that  $I_z\subseteq J$. Hence $J=I_z$, and this shows that there is exactly one $z\in Z$ such that $I_z$ is $C$-compatible with $J$, and this set $I_z$ is precisely $J$. This concludes the proof of Claim 2. 
	
	\medskip
	
	\noindent \underline{\it Claim 3.} Assume that the finite cover $Z$ of $p$ satisfies (Q1) and (Q2), and assume that $\mathfrak Z = \bigcup_{z\in Z} I_z \ne I$. Let $\alpha= \beta e \in \max(\mathfrak Z) \setminus I$, with $e\in X\in C$, be an element of maximal length amongst all elements in $\max(\mathfrak Z) \setminus I$.  Then $X\in \Cfin$ and for each $f\in X$, each $z\in Z$ such that
	$\beta f \in I_z$, and each $x\in X$, there exists a unique element $z(x)\in Z$ such that $I_{z(x)} = (I_z\setminus \{\beta f\}) \cup \{\beta x\}$. 
	
	\medskip
	
	\noindent \underline{ \it Proof of Claim 3.}  First observe that $X\in \Cfin$ by (Q1). For each $x\in X$ we set $\alpha (x) = \beta x$.
	We now show that $\alpha (x) \notin I$ for all $x\in X$. Let $z_0\in Z$ be such that $\alpha \in I_{z_0}$. If $\alpha (x)\in I$, then since $I\subset I_{z_0}$ and $\alpha =\beta e\notin I$, we would have that $\beta e, \beta x$ are two distinct elements of $I_{z_0}$, contradicting the fact that $I_{z_0}$ is $C$-compatible. Hence $\alpha (x)\notin I$ for all $x\in X$. 
	
	Now let $f\in X$ and $z\in Z$ be such that $\beta f \in I_z$.  Since $\alpha = \beta e$ is an element of maximal length in $\mathfrak Z \setminus I$, and $\alpha (f) = \beta f\notin I$ has the same length as $\alpha$, we infer that $\alpha (f)$ is a maximal element of $I_z$. Take $x\in X$. We can obviously assume that $x\ne f$. Set $J= J(z,x) = (I_z\setminus \{\alpha (f)\}) \cup \{\alpha(x) \} $.
	Since $\alpha (f)$ is a maximal element of $I_z$, it follows that $J$ is a lower subset of $\FC$. Then $J\in \mathcal X_0$, $I\subseteq J$, and $J$ satisfies (Q1), (Q2) and the second part of (P3) with respect to $\mathfrak Z = \bigcup_{z\in Z} I_z$, so by Claim 2 there exists a unique $z(x)\in Z$ such that $J= I_{z(x)}$. This concludes the proof of Claim 3.
	
	\medskip
	
	We are now ready for our reduction step.  
	Assume that the finite cover $Z$ of $p$ satisfies (Q1) and (Q2), and assume that $\mathfrak Z = \bigcup_{z\in Z} I_z \ne I$. Let $\alpha= \beta e \in \max(\mathfrak Z) \setminus I$, with $e\in X\in C$, be an element of maximal length amongst all elements in $\max(\mathfrak Z) \setminus I$. 
	We set 
	\begin{multline*}
	    Z ':= \{ z\in Z : \beta \notin I_z\} \cup \\ \{ z'\in \mathcal E : I_{z'} = (I_z\setminus \{\beta f\})_0 \text{ for all } z\in Z , f\in X \text{ such that } \beta f\in I_z \}.
	\end{multline*} 
	To show that $Z'$ is a finite cover of $p$, it is enough to observe that for each $z\in Z$ there is $z'\in Z'$ such that $z\le z'$. If $z\in Z$ and $\beta \notin I_z$ this is obvious. If $z\in Z$ and $\beta \in I_z$, then by (Q2) there is $f\in X$ such that $\beta f \in I_z$ and then clearly $z\le z'$, where $I_{z'} = (I_z\setminus \{\beta f\})_0$. This also shows that   $\bigvee_{z\in Z} \rho (z) \le  \bigvee_{z'\in Z'} \rho (z')$. Hence, in order to show that $\bigvee_{z\in Z} \rho (z) =  \bigvee_{z'\in Z'} \rho (z')$, it suffices to prove that for each $z'\in Z'$ we have $ \rho (z') \le  \bigvee_{z\in Z} \rho (z)$. We can assume that $I_{z'} = (I_z\setminus \{\beta f\})_0$, where $z\in Z$, $f\in X$ and $\beta f \in I_z $. By Claim 3, for each $x\in X$ 
	we have
	$$(I_z\setminus \{\beta f\}) \cup \{\beta x\} = I_{z(x)}$$
	for a unique $z(x) \in Z$, and clearly $I_z\setminus \{\beta f \} = I_{z(x)}\setminus \{ \beta x \} $. Hence, using Remark \ref{rem:e-for-X}, we get in $\Le _K ^\ab (E,C)$:
	\begin{align*}
		\rho (z') & = \e ((I_z\setminus \{\beta f\})_0) = \prod _{\gamma \in \max(I_z\setminus \{\beta f\})} \gamma \gamma^* \\
		& = (\beta\beta ^*)(\prod _{\gamma \in \max(I_z\setminus \{\beta f\})} \gamma \gamma^*) \\
		& = (\sum_{x\in X} (\beta x)(\beta x)^*) \Big( \prod _{\gamma \in \max(I_z\setminus \{\beta f\})} \gamma \gamma^*\Big)\\
		& = \sum_{x\in X} \Big[ (\beta x)(\beta x)^* \Big( \prod _{\gamma \in \max(I_z\setminus \{\beta x\})} \gamma \gamma^*\Big)\Big]\\
		& = \sum _{x\in X} \e (I_{z(x)}) = \sum_{x\in X} \rho (z(x)) \le \bigvee _{z\in Z} \rho (z). 
	\end{align*}
	Hence $Z'$ is a finite cover of $p$ with $\bigvee_{z'\in Z'} \rho (z')= \bigvee _{z\in Z} \rho (z)$, and $\bigcup _{z' \in Z'} I_{z'}\subsetneq \bigcup_{z\in Z} I_z$, because $\alpha =\beta e \notin \bigcup _{z' \in Z'} I_{z'}$. Moreover $Z'$ satisfies conditions (Q1) and (Q2) with respect to $\mathfrak Z ' := \bigcup _{z' \in Z'} I_{z'}$. Hence by induction we conclude that 
	$$\bigvee_{z\in Z}\rho (z) = \bigvee_{z'\in Z'} \rho (z') = \e (I) = p.$$
	This concludes the proof of the theorem.
\end{proof}

We can now state and prove our main structural result for the $*$-algebras $\Le_K^\ab (E,C)$ and the $C^*$-algebras $\OO (E,C)$.
Recall that the tight algebras and tight groupoid appearing in the statement have been introduced in Definition \ref{def:tight-algebras}.

\begin{theorem}\label{thm:iso-tight-algebras}
	If $(E,C)$ is a separated graph and $K$ is a commutative unital ring with involution, then 
	\begin{equation}\label{eq:iso:tight-algebras}
		\Le_K^\ab (E,C) \cong K_\tight[\IS(E,C)] = A_K (\G_\tight(E,C)) \cong C_K(\dual\E_\tight) \rtimes \F.
	\end{equation}
	A similar result holds for \cstar{}algebras:
	\begin{equation}\label{eq:iso:tight-cst-algebras}
		\OO(E,C)\cong C^*_\tight(\IS(E,C)) = C^*(\G_\tight(E,C))\cong \contz(\dual\E_\tight)\rtimes\F.    \end{equation}
	Here the crossed products are with respect to the  canonical partial action (restricted to $\dual\E_\tight$), which is the same as the one appearing in Proposition \ref{pro:iso-groupoid-co-algebras} and Corollary~\ref{cor:iso-Co-algebras}.
\end{theorem}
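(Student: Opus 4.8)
The plan is to obtain the entire chain of isomorphisms by pairing the universal property of Theorem \ref{thm:Leavitt-universal-tight} with the groupoid decomposition of Theorem \ref{thm:ECMunntrees} and Proposition \ref{pro:iso-groupoid-co-algebras}, restricted to the tight part of the spectrum. First I would establish the leftmost isomorphism $\Le_K^\ab(E,C)\cong K_\tight[\IS(E,C)]$ by comparing universal properties. By Definition \ref{def:tight-algebras} and Exel's theory \cite{Exel:Inverse_combinatorial}, the Steinberg algebra $K_\tight[\IS(E,C)]=A_K(\G_\tight(E,C))$ is the universal $*$-algebra over $K$ for tight representations of $\IS(E,C)$, equipped with a canonical tight representation through which every tight representation factors uniquely. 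On the other hand, Theorem \ref{thm:Leavitt-universal-tight} shows that the canonical map $\rho\colon\IS(E,C)\to\Le_K^\ab(E,C)$ is tight and that every tight $*$-homomorphism out of $\IS(E,C)$ factors uniquely through it; hence $\Le_K^\ab(E,C)$ together with $\rho$ satisfies exactly the same universal property. Uniqueness of universal objects then yields a canonical $*$-isomorphism $\Le_K^\ab(E,C)\cong K_\tight[\IS(E,C)]$ intertwining the two universal tight representations.

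Next I would prove the rightmost isomorphism $A_K(\G_\tight(E,C))\cong C_K(\dual\E_\tight)\rtimes\F$ by restricting the crossed-product picture of the full groupoid to the tight spectrum. By Theorem \ref{thm:ECMunntrees} we have $\IS(E,C)\cong\YY\rtimes_\theta^r\F$, and identifying $\E=\E(\IS(E,C))\cong\YY$ through Theorem \ref{thm:semidirect-product}, Proposition \ref{pro:iso-groupoid-co-algebras} gives $\G(E,C)\cong\dual\E\rtimes_{\hat\theta}\F$. The tight groupoid $\G_\tight(E,C)$ is, by definition, the restriction of $\G(E,C)$ to the closed subspace $\dual\E_\tight$ of its unit space $\dual\E$. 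Since $\dual\E_\tight$ is by construction an invariant subset for the canonical action of $\IS(E,C)$, and the partial action $\hat\theta$ of $\F$ is induced from that action and has the same orbits on $\dual\E$, the subspace $\dual\E_\tight$ is $\hat\theta$-invariant; consequently the restricted groupoid is precisely the transformation groupoid $\dual\E_\tight\rtimes_{\hat\theta}\F$ of the restricted partial action. Applying the groupoid--crossed-product dictionary used in Proposition \ref{pro:iso-groupoid-co-algebras} (namely \cite{BeuterGoncalves}*{Theorem~3.2} in the algebraic case) to this restricted partial action then yields $A_K(\G_\tight(E,C))\cong C_K(\dual\E_\tight)\rtimes\F$, completing \eqref{eq:iso:tight-algebras}.

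The \cstar{}case \eqref{eq:iso:tight-cst-algebras} follows by the identical argument after passing to enveloping \cstar{}algebras: $C^*_\tight(\IS(E,C))=C^*(\G_\tight(E,C))$ is the universal \cstar{}algebra for tight representations of $\IS(E,C)$, and since $\OO(E,C)$ is the enveloping \cstar{}algebra of $\Le_\C^\ab(E,C)$, Theorem \ref{thm:Leavitt-universal-tight} shows it is the universal \cstar{}algebra for tight $*$-representations of $\IS(E,C)$ into \cstar{}algebras, giving $\OO(E,C)\cong C^*_\tight(\IS(E,C))$; the \cstar{}version of the groupoid dictionary \cite{Abadie}*{Theorem~3.3} then supplies $C^*(\G_\tight(E,C))\cong\contz(\dual\E_\tight)\rtimes\F$. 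The genuine mathematical content has already been absorbed into Theorem \ref{thm:Leavitt-universal-tight}, whose proof is the long combinatorial analysis of finite covers; granting that result, the only remaining point requiring care is the verification that restricting the universal groupoid to $\dual\E_\tight$ recovers the tight groupoid as the transformation groupoid of the restricted partial action, i.e. the $\hat\theta$-invariance of $\dual\E_\tight$. I expect this to be the most delicate point to phrase precisely, although it is a formal consequence of the facts that tightness is defined intrinsically on $\E$ and that the $\F$-action is induced from the $\IS(E,C)$-action via the grading.
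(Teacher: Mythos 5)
Your proposal is correct and follows essentially the same route as the paper: Theorem \ref{thm:Leavitt-universal-tight} gives the universal property of $\Le_K^\ab(E,C)$ for tight representations, the invariance of $\dual\E_\tight$ under the partial $\F$-action identifies $\G_\tight(E,C)$ with the transformation groupoid $\dual\E_\tight\rtimes_{\hat\theta}\F$, and the crossed-product isomorphisms follow from \cite{BeuterGoncalves}*{Theorem~3.2} and \cite{Abadie}*{Theorem~3.3}. The only difference is bookkeeping: for the universality of $A_K(\G_\tight(\IS(E,C)))$ in the purely algebraic setting you should cite \cite{SteinbergSzakacs}*{Corollary~2.5} rather than Exel's \cstar{}algebraic theory, and the invariance of the tight spectrum you flag as delicate is exactly \cite{XinLi}*{Lemma~3.2}.
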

\begin{proof}
Theorem~\ref{thm:Leavitt-universal-tight} shows that $\Le_K^\ab(E,C)$ is the universal $*$-algebra over $K$ for tight $*$\nb-homomorphisms. 
	Hence the isomorphism $$\Le_K^\ab (E,C) \cong K_\tight[\IS(E,C)] := A_K (\mathcal G_\tight(E,C))$$ follows from \cite{SteinbergSzakacs}*{Corollary 2.5}. 
	
	The tight spectrum $\dual\E_\tight$ is invariant under the partial action of $\F$ (see \cite{XinLi}*{Lemma~3.2}) and the isomorphism $\G(E,C)\cong \dual\E\rtimes\F$ from Proposition~\ref{pro:iso-groupoid-co-algebras} factors through the tight quotient: $\G_\tight(E,C)= \dual\E_\tight\rtimes \IS (E,C)\cong \dual\E_\tight\rtimes \F$. The last isomorphism in~\eqref{eq:iso:tight-algebras} then follows from \cite{BeuterGoncalves}*{Theorem~3.2}.
	
	As in the proof of Proposition~\ref{pro:iso-groupoid-co-algebras} we can use \cite{Abadie}*{Theorem~3.3} to deduce the \cstar{}algebra isomorphism~\eqref{eq:iso:tight-cst-algebras}.
\end{proof}

\begin{remark}
	Similar observations as in Remark~\ref{rem:reduced-Toeplitz} hold for $\OO(E,C)$: first, in view of Theorem \ref{thm:iso-tight-algebras}, it is natural to define $\OO_r(E,C):=C^*_{\tight,r}(\IS(E,C))$. By \cite{XinLi}*{Proposition~3.1}, the isomorphism $\OO(E,C)\cong \contz(\dual\E_\tight)\rtimes\F$ from Theorem \ref{thm:iso-tight-algebras} factors through the reduced \cstar{}algebras:
	$$\OO_r(E,C)\cong C^*_r(\G_\tight(E,C))\cong \contz(\dual\E_\tight)\rtimes_r\F.$$
	This is compatible with the definition of $\OO_r(E,C)$ appearing in \cite{Lolk:tame}. In particular, $\OO_r(E,C)$ is always an exact \cstar{}algebra, although it might be not nuclear. And $\OO(E,C)$ is not even exact in general by Remark~\ref{rem:reduced-Toeplitz}. By \cite{BussFerraroSehnem}*{Theorem~3.10}, $\OO_r(E,C)$ is nuclear if and only if $\OO(E,C)=\OO_r(E,C)$, and this happens if and only if the groupoid $\G_\tight(E,C)$ is amenable, which is in turn equivalent to amenability (or approximation property) of the underlying partial action of $\F$ on $\dual\E_\tight$. Related to these observations, Theorem~5.1 in \cite{Lolk:nuclearity} gives several equivalent conditions for nuclearity of $\OO(E,C)$ or, equivalently, $\OO_r(E,C)$ when $E$ is a finitely separated graph.
\end{remark}

\section*{Acknowledgments} The authors are very grateful to Ben Steinberg for his many comments and suggestions on an earlier version of the paper. In particular, the approach to Theorem \ref{thm:ECMunntrees} using Munn trees and the free inverse semigroup $\FIS(E)$ is due to him. We also would like to thank the anonymous referee for their careful reading and constructive comments, which helped us improve the clarity and presentation of the paper.

\begin{bibdiv}
	\begin{biblist}

\bib{Abadie}{article}{
   author={Abadie, Fernando},
   title={On partial actions and groupoids},
   journal={Proc. Amer. Math. Soc.},
   volume={132},
   date={2004},
   number={4},
   pages={1037--1047},
   issn={0002-9939},
   review={\MR{2045419}},
   doi={10.1090/S0002-9939-03-07300-3},
}

\bib{ABPS}{article}{
    AUTHOR = {Ara, Pere}, 
    AUTHOR = {Bosa, Joan}, 
    AUTHOR = {Pardo, Enrique},
    AUTHOR = {Sims, Aidan},
     TITLE = {The groupoids of adaptable separated graphs and their type
              semigroups},
   JOURNAL = {Int. Math. Res. Not. IMRN},
      YEAR = {2021},
    NUMBER = {20},
     PAGES = {15444--15496},
      ISSN = {1073-7928,1687-0247},
      review={\MR{4329873}},
       DOI = {10.1093/imrn/rnaa022},
       URL = {https://doi.org/10.1093/imrn/rnaa022},
}

\bib{ABC}{article}{
    AUTHOR = {Ara, Pere}, 
    AUTHOR = {Buss, Alcides},
    AUTHOR = {Dalla Costa, Ado},
     TITLE = {Free actions of groups on separated graph {$C^*$}-algebras},
   JOURNAL = {Trans. Amer. Math. Soc.},
    VOLUME = {376},
      YEAR = {2023},
    NUMBER = {4},
     PAGES = {2875--2919},
      ISSN = {0002-9947},
  review={\MR{4557884}},
       DOI = {10.1090/tran/8839},
       URL = {https://doi.org/10.1090/tran/8839},
}

\bib{Ara-Exel:Dynamical_systems}{article}{
  author={Ara, Pere},
  author={Exel, Ruy},
  title={Dynamical systems associated to separated graphs, graph algebras, and paradoxical decompositions},
  journal={Adv. Math.},
  volume={252},
  date={2014},
  pages={748--804},
  issn={0001-8708},
  review={\MR {3144248}},
  doi={10.1016/j.aim.2013.11.009},
}

\bib{AEK}{article}{
   author={Ara, Pere},
   author={Exel, Ruy},
   author={Katsura, Takeshi},
   title={Dynamical systems of type $(m,n)$ and their $\rm C^*$-algebras},
   journal={Ergodic Theory Dynam. Systems},
   volume={33},
   date={2013},
   number={5},
   pages={1291--1325},
   issn={0143-3857},
   review={\MR{3103084}},
   doi={10.1017/S0143385712000405},
}

\bib{Ara-Goodearl:C-algebras_separated_graphs}{article}{
  author={Ara, Pere},
  author={Goodearl, Kenneth R.},
  title={$C^*$\nobreakdash-algebras of separated graphs},
  journal={J. Funct. Anal.},
  volume={261},
  date={2011},
  number={9},
  pages={2540--2568},
  issn={0022-1236},
  doi={10.1016/j.jfa.2011.07.004},
  review={\MR{2826405}},
}

\bib{AG12}{article}{
    AUTHOR = {Ara, Pere}, 
    AUTHOR = {Goodearl, Kenneth R.},
     TITLE = {Leavitt path algebras of separated graphs},
   JOURNAL = {J. Reine Angew. Math.},
    VOLUME = {669},
      YEAR = {2012},
     PAGES = {165--224},
      ISSN = {0075-4102},
      review={\MR{2980456}},
       DOI = {10.1515/crelle.2011.146},
       URL = {https://doi.org/10.1515/crelle.2011.146},
}

\bib{AraLolk}{article}{
    AUTHOR = {Ara, Pere},
    author = {Lolk, Matias},
     TITLE = {Convex subshifts, separated {B}ratteli diagrams, and ideal
              structure of tame separated graph algebras},
   JOURNAL = {Adv. Math.},
    VOLUME = {328},
      YEAR = {2018},
     PAGES = {367--435},
      ISSN = {0001-8708,1090-2082},
      DOI = {10.1016/j.aim.2018.01.020},
       URL = {https://doi.org/10.1016/j.aim.2018.01.020},
}

\bib{AshHall}{article}{
  author={Ash, C. J.}, 
  author={Hall, T. E.},
  title={Inverse semigroups on graphs},
  journal={Semigroup Forum},
  volume={11},
  year={1975/76},
  number={2},
  pages={140--145},
  issn={0037-1912},
  review={\MR{387449}},
  doi={10.1007/BF02195262},
  url={https://doi.org/10.1007/BF02195262},
}

\bib{BatesPask}{article}{
   author={Bates, Teresa},
   author={Pask, David},
   title={$C^*$-algebras of labelled graphs},
   journal={J. Operator Theory},
   volume={57},
   date={2007},
   number={1},
   pages={207--226},
   issn={0379-4024},
   review={\MR{2304922}},
}

\bib{BeuterGoncalves}{article}{
   author={Beuter, Viviane Maria},
   author={Gon\c{c}alves, Daniel},
   title={The interplay between Steinberg algebras and skew rings},
   journal={J. Algebra},
   volume={497},
   date={2018},
   pages={337--362},
   issn={0021-8693},
   review={\MR{3743184}},
   doi={10.1016/j.jalgebra.2017.11.013},
}

\bib{BrownOzawa}{book}{
  author={Brown, Nathanial P.},
  author={Ozawa, Narutaka},
  title={$C^*$\nobreakdash-algebras and finite-dimensional approximations},
  series={Graduate Studies in Mathematics},
  volume={88},
  publisher={Amer. Math. Soc.},
  place={Providence, RI},
  date={2008},
  pages={xvi+509},
  isbn={978-0-8218-4381-9},
  isbn={0-8218-4381-8},
  review={\MR{2391387}},
}

\bib{BulFouGo}{article}{
	AUTHOR={Bulman-Fleming, Sydney},
	author={Fountain, John},
	author={Gould, Victoria},
	TITLE = {Inverse semigroups with zero: covers and their structure},
	JOURNAL = {J. Austral. Math. Soc. Ser. A},
	VOLUME = {67},
	YEAR = {1999},
	NUMBER = {1},
	PAGES = {15--30},
	ISSN = {0263-6115},
      review={\MR{1699153}},
}

\bib{BussFerraroSehnem}{article}{
   author={Buss, Alcides},
   author={Ferraro, Dami\'{a}n},
   author={Sehnem, Camila F.},
   title={Nuclearity for partial crossed products by exact discrete groups},
   journal={J. Operator Theory},
   volume={88},
   date={2022},
   number={1},
   pages={83--115},
   issn={0379-4024},
   review={\MR{4438600}},
   doi={10.7900/jot},
}

\bib{Exel:Inverse_combinatorial}{article}{
  author={Exel, Ruy},
  title={Inverse semigroups and combinatorial $C^*$\nobreakdash-algebras},
  journal={Bull. Braz. Math. Soc. (N.S.)},
  volume={39},
  date={2008},
  number={2},
  pages={191--313},
  issn={1678-7544},
  review={\MR{2419901}},
  doi={10.1007/s00574-008-0080-7},
}

\bib{Exel:Partial_dynamical}{book}{
  author={Exel, Ruy},
  title={Partial dynamical systems, Fell bundles and applications},
  series={Mathematical Surveys and Monographs},
  volume={224},
  date={2017},
  pages={321},
  isbn={978-1-4704-3785-5},
  isbn={978-1-4704-4236-1},
  publisher={Amer. Math. Soc.},
  place={Providence, RI},
  review={\MR {3699795}},
}

\bib{Exel:Tight-cover}{incollection}{
    AUTHOR = {Exel, Ruy},
     TITLE = {Tight and cover-to-join representations of semilattices and
              inverse semigroups},
 BOOKTITLE = {Operator theory, functional analysis and applications},
    SERIES = {Oper. Theory Adv. Appl.},
    VOLUME = {282},
     PAGES = {183--192},
 PUBLISHER = {Birkh\"{a}user/Springer, Cham},
      YEAR = {2021},
  review={\MR {4248017}},
       DOI = {10.1007/978-3-030-51945-2\_9},
       URL = {https://doi.org/10.1007/978-3-030-51945-2_9},
}

\bib{JonesLawson}{article}{
  author={Jones, David G.},
  author={Lawson, Mark V.},
  title={Graph inverse semigroups: their characterization and completion},
  journal={J. Algebra},
  volume={409},
  year={2014},
  pages={444--473},
  issn={0021-8693},
  review={\MR {3198850}},
  doi={10.1016/j.jalgebra.2014.04.001},
  url={https://doi.org/10.1016/j.jalgebra.2014.04.001},
}

\bib{KelLawson}{article}{
	AUTHOR = {Kellendonk, J.},
	author={Lawson, Mark V.},
	TITLE = {Partial actions of groups},
	JOURNAL = {Internat. J. Algebra Comput.},
	VOLUME = {14},
	YEAR = {2004},
	NUMBER = {1},
	PAGES = {87--114},
	ISSN = {0218-1967,1793-6500},
    review={\MR{2041539}},
	DOI = {10.1142/S0218196704001657},
	URL = {https://doi.org/10.1142/S0218196704001657},
}

\bib{KumjianPask}{article}{
   author={Kumjian, Alex},
   author={Pask, David},
   title={Higher rank graph $C^\ast$-algebras},
   journal={New York J. Math.},
   volume={6},
   date={2000},
   pages={1--20},
   review={\MR{1745529}},
}

\bib{LaLonde-ConditionK}{article}{
   author={LaLonde, Scott M.},
   author={Milan, David},
   author={Scott, Jamie},
   title={Condition (K) for inverse semigroups and the ideal structure of their $C^*$-algebras},
   journal={J. Algebra},
   volume={523},
   date={2019},
   pages={119--153},
   issn={0021-8693},
   review={\MR{3901699}},
   doi={10.1016/j.jalgebra.2018.12.017},
}

\bib{lawson}{book}{
  author={Lawson, Mark V.},
  title={Inverse semigroups},
  note={The theory of partial symmetries},
  publisher={World Scientific Publishing Co., Inc., River Edge, NJ},
  year={1998},
  pages={xiv+411},
  isbn={981-02-3316-7},
  review={\MR {1694900}},
  doi={10.1142/9789812816689},
  url={https://doi.org/10.1142/9789812816689},
}

\bib{XinLi}{article}{
   author={Li, Xin},
   title={Partial transformation groupoids attached to graphs and
   semigroups},
   journal={Int. Math. Res. Not. IMRN},
   date={2017},
   number={17},
   pages={5233--5259},
   issn={1073-7928},
   review={\MR{3694599}},
   doi={10.1093/imrn/rnw166},
}

\bib{Li-K-theory}{article}{
   author={Li, Xin},
   title={K-theory for semigroup $\rm C^*$-algebras and partial crossed
   products},
   journal={Comm. Math. Phys.},
   volume={390},
   date={2022},
   number={1},
   pages={1--32},
   issn={0010-3616},
   review={\MR{4381183}},
   doi={10.1007/s00220-021-04194-9},
}

\bib{Lolk:tame}{article}{
  author={Lolk, Matias},
  title={Exchange rings and real rank zero C*-algebras associated with finitely separated graphs},  
  journal={Preprint arXiv:1705.04494},
  year={2017},
}

\bib{Lolk:nuclearity}{article}{
  author={Lolk, Matias},
  title={On nuclearity and exactness of the tame C*-algebras associated with finitely separated graphs},  
  journal={Preprint arXiv:1705.04500},
  year={2017},
}

\bib{marg-meakin-93}{article}{
	AUTHOR={Margolis, Stuart W.},
	AUTHOR={Meakin, John},
	TITLE = {Free inverse monoids and graph immersions},
	JOURNAL = {Internat. J. Algebra Comput.},
	VOLUME = {3},
	YEAR = {1993},
	NUMBER = {1},
	PAGES = {79--99},
	ISSN = {0218-1967,1793-6500},
       review={\MR {1214007}},
	DOI = {10.1142/S021819679300007X},
	URL = {https://doi.org/10.1142/S021819679300007X},
}

\bib{meakin-wang-2021}{article}{
  author={Meakin, John},
  author = {Wang, Zhengpan},
  title={On graph inverse semigroups},
  journal={Semigroup Forum},
  volume={102},
  year={2021},
  number={1},
  pages={217--234},
  issn={0037-1912},
 review={\MR {4214502}},
  doi={10.1007/s00233-020-10130-5},
  url={https://doi.org/10.1007/s00233-020-10130-5},
}

\bib{meakin-milan-wang-2021}{article}{
    AUTHOR = {Meakin, John},
    author = {Milan, David},
    author = {Wang, Zhengpan},
     TITLE = {On a class of inverse semigroups related to {L}eavitt path
              algebras},
   JOURNAL = {Adv. Math.},
    VOLUME = {384},
      YEAR = {2021},
     PAGES = {Paper No. 107729, 37},
      ISSN = {0001-8708,1090-2082},
 review = {\MR{4242903}},
       DOI = {10.1016/j.aim.2021.107729},
}

\bib{mesyan-mitchell-2016}{article}{
   author={Mesyan, Zachary},
   author={Mitchell, J. D.},
   title={The structure of a graph inverse semigroup},
   journal={Semigroup Forum},
   volume={93},
   date={2016},
   number={1},
   pages={111--130},
   issn={0037-1912},
   review={\MR{3528431}},
   doi={10.1007/s00233-016-9793-x},
}

\bib{MilanSteinberg}{article}{
   author={Milan, David},
   author={Steinberg, Benjamin},
   title={On inverse semigroup $C^*$-algebras and crossed products},
   journal={Groups Geom. Dyn.},
   volume={8},
   date={2014},
   number={2},
   pages={485--512},
   issn={1661-7207},
   review={\MR{3231226}},
   doi={10.4171/GGD/236},
}

\bib{Munn}{article}{
	AUTHOR={Munn, W. D.},
	TITLE = {Free inverse semigroups},
	JOURNAL = {Proc. London Math. Soc. (3)},
	VOLUME = {29},
	YEAR = {1974},
	PAGES = {385--404},
	ISSN = {0024-6115,1460-244X},
	review = {\MR{360881}},
	DOI = {10.1112/plms/s3-29.3.385},
}

\bib{Paterson:Groupoids}{book}{
  author={Paterson, Alan L. T.},
  title={Groupoids, inverse semigroups, and their operator algebras},
  series={Progress in Mathematics},
  volume={170},
  publisher={Birkh\"auser Boston Inc.},
  place={Boston, MA},
  date={1999},
  pages={xvi+274},
  isbn={0-8176-4051-7},
  review={\MR{1724106}},
  doi={10.1007/978-1-4612-1774-9},
}

\bib{Tomforde}{article}{
   author={Tomforde, Mark},
   title={A unified approach to Exel-Laca algebras and $C^\ast$-algebras
   associated to graphs},
   journal={J. Operator Theory},
   volume={50},
   date={2003},
   number={2},
   pages={345--368},
   issn={0379-4024},
   review={\MR{2050134}},
}

\bib{Steinberg2003}{article}{
	AUTHOR = {Steinberg, Benjamin},
	TITLE = {Mc{A}lister's {$P$}-theorem via {S}ch\"utzenberger graphs},
	JOURNAL = {Comm. Algebra},
	VOLUME = {31},
	YEAR = {2003},
	NUMBER = {9},
	PAGES = {4387--4392},
	ISSN = {0092-7872,1532-4125},
       review={\MR{1995541}},
	DOI = {10.1081/AGB-120022798},
}

\bib{Steinberg2010}{article}{
    AUTHOR = {Steinberg, Benjamin},
     TITLE = {A groupoid approach to discrete inverse semigroup algebras},
   JOURNAL = {Adv. Math.},
    VOLUME = {223},
      YEAR = {2010},
    NUMBER = {2},
     PAGES = {689--727},
      ISSN = {0001-8708,1090-2082},
       DOI = {10.1016/j.aim.2009.09.001},
       URL = {https://doi.org/10.1016/j.aim.2009.09.001},
}

\bib{SteinbergSzakacs}{article}{
    AUTHOR = {Steinberg, Benjamin},
    AUTHOR = {Szak\'{a}cs, N\'{o}ra},
     TITLE = {Simplicity of inverse semigroup and \'{e}tale groupoid
              algebras},
   JOURNAL = {Adv. Math.},
    VOLUME = {380},
      YEAR = {2021},
     PAGES = {Paper No. 107611, 55},
      ISSN = {0001-8708,1090-2082},
       DOI = {10.1016/j.aim.2021.107611},
}

\bib{webster}{article}{
    AUTHOR = {Webster, Samuel B. G.},
     TITLE = {The path space of a directed graph},
   JOURNAL = {Proc. Amer. Math. Soc.},
    VOLUME = {142},
      YEAR = {2014},
    NUMBER = {1},
     PAGES = {213--225},
      ISSN = {0002-9939,1088-6826},
     review={\MR{3119197}},
       DOI = {10.1090/S0002-9939-2013-11755-7},
}

\end{biblist}
\end{bibdiv}

\end{document}